\newtheorem{theorem}{Theorem}[section]
\newtheorem{lemma}[theorem]{Lemma}
\newtheorem{condition}{Condition}[section]
\newtheorem{definition}{Definition}[section]
\newtheorem{remark}[theorem]{Remark}
\newtheorem{assumption}[theorem]{Assumption}
\newcommand{\BI}{\mathbb{I}}
\newcommand{\x}{\mathbf x}
\newcommand{\y}{\mathbf y}
\newcommand{\s}{\mathbf s}
\newcommand{\sa}{\mathbf a}
\newcommand{\g}{\mathbf g}
\newcommand{\e}{\mathbf e}
\newcommand{\z}{\mathbf z}
\newcommand{\w}{\mathbf w}
\newcommand{\argmin}{\mathop{\rm argmin}}
\newcommand{\KCal}{\mathcal{K}}
\newcommand{\QCal}{\mathcal{Q}}
\newcommand{\SCal}{\mathcal{S}}
\newcommand{\br}{\mathbb{R}}
\newcommand{\ba}{\begin{array}}
\newcommand{\ea}{\end{array}}
\newcommand{\ACal}{\mathcal{A}}
\newcommand{\etal}{{\it et al.\ }}
\title{A Unified Scheme to Accelerate Adaptive  Cubic Regularization and Gradient Methods for Convex Optimization }
\author{
	Bo JIANG
	\thanks{Research Center for Management Science and Data Analytics, School of Information Management and Engineering, Shanghai University of Finance and Economics, Shanghai 200433, China. Email: isyebojiang@gmail.com. } \and
	Tianyi LIN
	\thanks{Department of Industrial Engineering and Operations Research, UC Berkeley, Berkeley, CA 94720, USA. Email: darren\_lin@berkeley.edu} \and
	Shuzhong ZHANG
	\thanks{Department of Industrial and Systems Engineering, University of Minnesota, Minneapolis, MN 55455, USA. Email: zhangs@umn.edu.}}
\begin{document}
	\maketitle
	
	\begin{abstract}
		In this paper we propose a unified two-phase scheme for convex optimization to accelerate: (1) the adaptive cubic regularization methods with exact/inexact Hessian matrices, and (2) the adaptive gradient method, 
		without any knowledge of the Lipschitz constants for the gradient or the Hessian. 
		This is achieved by tuning the parameters used in the algorithm \textit{adaptively} in its process of progression, which can be viewed as a 
		relaxation over the existing algorithms in the literature. Under 
		the assumption that the sub-problems can be solved approximately, we establish overall iteration complexity bounds for three newly proposed algorithms to obtain an $\epsilon$-optimal solution. Specifically, 
		we show that the adaptive cubic regularization methods with the exact/inexact Hessian matrix both achieve an iteration complexity in the order of $O\left( 1 / \epsilon^{1/3} \right)$, which matches that of the original accelerated cubic regularization method presented in \cite{Nesterov-2008-Accelerating} assuming the availability of the exact Hessian information and the Lipschitz constants, and the global solution of the sub-problems.
		Under the same two-phase adaptive acceleration framework, the  gradient method achieves an iteration complexity in the order of $O\left( 1 / \epsilon^{1/2} \right)$, which is known to be best possible (cf.~\cite{Nesterov-2013-Introductory}).
		Our numerical experiment results show 
		a clear effect of 
		acceleration displayed in the adaptive Newton's method with cubic regularization
		on a set of regularized logistic regression instances. 
	\end{abstract}
	
	\vspace{0.25cm}
	
	\noindent {\bf Keywords:} convex optimization; acceleration; adaptive algorithm; cubic regularization; Newton's method; gradient method; iteration complexity.
	
	\vspace{0.25cm}
	
	\noindent {\bf Mathematics Subject Classification:} 90C06, 90C60, 90C53.
	
	\section{Introduction}
	\subsection{Motivations}\label{Section1:Motivation}
	We consider the following generic unconstrained optimization model: 
	\begin{equation}\label{Prob:main}
	f^* := \min_{\x\in\br^d} \ f(\x),
	\end{equation}
	where $f: \br^d\rightarrow\br$ is \textit{smooth} and \textit{convex}, and $f^*>-\infty$. During the past 
	decades, various classes of optimization algorithms for solving \eqref{Prob:main} have been developed and carefully analyzed; see \cite{Luenberger-1984-Linear, Nocedal-2006-Numerical, Nesterov-2013-Introductory} for detailed information and references. 
	Two types of concerns often arise in the design of optimization algorithms. First, the high order information (such as the Hessian matrices) maybe expensive to acquire. Second,
	the problem  parameters such as the first and the second order Lipschitz constants are usually hard to estimate. 
	On the other hand, for an optimization algorithms to be effective and practical, they will need to be robust and less dependent on the knowledge of the structure of the problem at hand. In this context, schemes to adaptively adjust the parameters used in the algorithm are desirable, and 
	are likely leading to improve its numerical performances. 
	As an example, researchers in the area of deep learning tend to  
	train their models with adaptive gradient method (see e.g.\ AdaGrad in \cite{Duchi-2011-Adaptive})  due to its robustness and effectiveness (cf.\ \cite{Karparthy-2017-Peak}). In fact, Adam \cite{Kingma-2014-Adam} and RMSProp \cite{Tieleman-2012-Lecture} are recognized as the default solution methods in the deep learning setting.
	
	Another fundamental issue in optimization (as well as in machine learning) is to understand how the classical algorithms (including both the first-order and second-order methods) can be accelerated. Nesterov  \cite{Nesterov-1983-Accelerated} put forward the very first accelerated (optimal in its iteration counts) gradient-based algorithm for convex optimization.
	Recently, a number of adaptively accelerated gradient methods have been proposed; see~\cite{Duchi-2011-Adaptive, Nesterov-2013-Gradient, Lin-2014-Adaptive, Monteiro-2016-Adaptive}. Unfortunately, none of these are {\it fully parameter free}.
	Comparing to their first-order counterpart, 
	investigations on 
	the second-order methods is relatively scarce, 
	as acceleration with the second-order information is much more involved. 
	To the best of our knowledge, 
	\cite{Nesterov-2008-Accelerating, Monteiro-2013-Accelerated} are the only papers that are concerned with accelerating the second-order methods. However, these two algorithms do require the knowledge of some problem (Lipschitz) constants,  

	Indeed, algorithms exhibiting both traits of \textbf{\textit{acceleration}} and \textbf{\textit{adaptation}} have been largely missing in the literature. 
	As a matter of fact, we are unaware of any prior accelerated second-order methods (or even any first-order methods) that are fully independent of the problem constants while maintaining superior 
	theoretical iteration complexity bounds. For instance, the adaptive cubic regularized Newton's method \cite{Cartis-2012-Evaluation} merely achieves an iteration complexity bound of 
	$O\left( 1 /\epsilon^{1/2} \right)$ without acceleration. 
	Thus, a natural question raises:
	\begin{quote}
		\textsf{Can we develop an implementable accelerated cubic regularization method with an iteration complexity lower than $O\left( 1 / \epsilon^{1/2} \right)$?}
	\end{quote}
	
	This paper sets out to present an affirmative answer to the above question. 
	Moreover, the resulting accelerated adaptive cubic regularization algorithm displays an excellent numerical performance in solving a variety of large-scale machine learning models in our experiments.

	\subsection{Related Work}\label{Section1:RelatedWork}
	Nesterov's seminal work \cite{Nesterov-1983-Accelerated} triggered a burst of research on accelerating first-order methods.
	There have been a good deal of recent efforts to understand its nature from other perspectives \cite{Allen-2014-Linear, Bubeck-2015-Geometric, Su-2016-Differential, Wibisono-2016-Variational, Wilson-2016-Lyapunov}, or modify it to account for more general settings \cite{Beck-2009-Fast, Cotter-2011-Better, Lan-2012-Optimal, Drori-2014-Performance, Shalev-2014-Accelerated, Lin-2015-Universal}. Parallel to this, the adaptive gradient methods with the optimal convergence rate have been proposed \cite{Duchi-2011-Adaptive, Nesterov-2013-Gradient, Lin-2014-Adaptive, Monteiro-2016-Adaptive}, and widely used in training the deep neural networks \cite{Kingma-2014-Adam, Tieleman-2012-Lecture}. However, all of these algorithms are not fully parameter-independent.
	Specifically, Duchi \etal \cite{Duchi-2011-Adaptive} needs to tune the step-size $\eta$ and the regularization parameter $\delta$; Lin and Xiao \cite{Lin-2014-Adaptive} and Nesterov \cite{Nesterov-2013-Gradient} require a lower bound on the Lipschitz constant $L_g$ for the gradient; and Monteiro and Svaiter \cite{Monteiro-2016-Adaptive} need an upper bound of $L_g-\mu$, where $\mu$ is a strong convexity parameter.
	

	In terms of the second-order methods (in particular Newton's method), the literature regarding acceleration is quite limited. To the best of our knowledge, Nesterov \cite{Nesterov-2008-Accelerating} 
	is the first along this direction, where the overall iteration complexity for convex optimization was improved from $O\left( 1 / \epsilon^{1/2} \right)$ to $O\left( 1 / \epsilon^{1/3} \right)$ for the cubic regularization for Newton's method \cite{Nesterov-2006-Cubic}. After that, Monteiro and Svaiter \cite{Monteiro-2013-Accelerated} managed to accelerate the Newton proximal extragradient method \cite{Monteiro-2012-Iteration} with an improved iteration complexity of $O\left( 1 / \epsilon^{2/7} \right)$. Moreover, this approach allows a larger stepsize and can even accommodate a non-smooth objective function. Very recently, Shamir and Shiff \cite{Shamir-2017-Oracle} proved that $O\left( 1 / \epsilon^{2/7} \right)$ is actually a lower bound for the oracle complexity of the second-order methods for convex smooth optimization, which implies that the accelerated Newton proximal extragradient method is an optimal second-order method. However, viewed from an implementation perspective, the acceleration second-order scheme in \cite{Nesterov-2008-Accelerating, Monteiro-2013-Accelerated} are not easy to apply in practice. Indeed, Nesterov's method assumes that all the parameters, including the Lipschitz constant for the  Hessian, are known, and the sub-problems with cubic regularization are solved to global optimality; Monteiro and Svaiter's method also assumes the knowledge of the Lipschitz constant of the Hessian. To alleviate this, Cartis \etal incorporated an adaptive strategy into Nesterov's approach \cite{Nesterov-2008-Accelerating}, and further relaxed the criterion for solving each sub-problem while maintaining the convergence properties for both convex \cite{Cartis-2012-Evaluation} and non-convex \cite{Cartis-2011-Adaptive-I, Cartis-2011-Adaptive-II} cases. However, as mentioned earlier, the iteration complexity established in \cite{Cartis-2012-Evaluation} for convex optimization is merely $O\left( 1 / \epsilon^{1/2} \right)$. Furthermore, in \cite{Cartis-2012-Oracle} the same authors  also developed a way to construct an approximation for the Hessian, which significantly reduces the per-iteration cost. There are other recent works on approximate cubic regularization for Newton's method.
	For instance, Carmon and Duchi \cite{Carmon-2016-Gradient} and Agarwal \etal \cite{Agarwal-2016-Finding} proposed some variants, where the sub-problem is approximately solved without resorting to 
	Hessian matrix; Kohler and Lucchi \cite{Kohler-2017-SubSample} proposed a uniform sub-sampling strategy to approximate the Hessian in the cubic regularization for Newton's method. However, the approximative Hessian and gradient are constructed based on a priori unknown step which can only be determined after such approximations are formed. Xu \etal \cite{Xu-2017-Newton, Xu-2017-Second} fixed this issue by proposing appropriate uniform and non-uniform sub-sampling strategies to construct Hessian approximations in the trust region context, as well as the cubic regularization for Newton's method.
	

	\subsection{Contributions}\label{Section1:Contribution}
	The contributions of this paper can be summarized as follows.
	We present a unified adaptive accelerating scheme that can be specialized to several optimization algorithms including cubic regularized Newton's method with \textit{exact/inexact} Hessian and gradient method. This can be considered complementary to the current stream of research in two aspects. First, all the accelerated algorithms developed in this paper are parameter-free due to the new {\it fully adaptive}\/ strategies, while only {\it partially adaptive}\/ strategies are observed from other accelerated 
	first-order methods in the literature \cite{Nesterov-2013-Gradient, Lin-2014-Adaptive, Monteiro-2016-Adaptive}.
	Second, it is worth noting that the research efforts on accelerated algorithms have been rather unequally spread between the first-order and second-order methods, 
	with the former receiving a lot more attention. 
	Our results on the adaptive and accelerated cubic regularization for Newton's method contribute as one step towards balancing the studies on the two methods. 
	
	In terms of the convergence rates of our algorithms, for the cubic regularized Newton's method we show that a global convergence rate
	of $O\left( 1 / \epsilon^{1/3} \right)$ holds (Theorem \ref{Thm:AARC-Main}) without assuming any knowledge of the problem parameters. We further prove that, even without the exact Hessian information, the same $O\left( 1 / \epsilon^{1/3} \right)$ rate of convergence (Theorem \ref{Thm:AARCQ-Main}) is still achievable
	for the cubic regularized approximative Newton's method. For the gradient descent method, our adaptive algorthm achieves a convergence rate of $O\left( 1 / \epsilon^{1/2} \right)$ (Theorem \ref{Thm:AAGD-Main}) which matches the optimal rate for the first order methods \cite{Nesterov-2013-Introductory}. When the objective function is strongly convex, the convergence results are also established for these three algorithms accordingly.
	
	For the subproblem in the cubic regularized Newton's method with
	\textit{exact/inexact} Hessian, we only require an approximative solution satisfying \eqref{Eqn:Approx_Subprob}. Note that our approximity measure does not include the usual condition in the form of \eqref{Eqn:Approx2_Subprob}, 
	and thus is weaker than the one used in \cite{Cartis-2011-Adaptive-I}. This relaxation opens up possibilities for other approximation solution methods to solve the subproblem. For instance, Carmon and Duchi \cite{Carmon-2016-Gradient} proposed to use the gradient descent method, and they proved that it works well even when the cubic regularized subproblem is nonconvex. Moreover, such function in our case is strongly convex, and thus the gradient descent subroutine is expected to have a fast (linear) convergence.

	\subsection{Notations and Organization}
	Throughout the paper, we denote vectors by bold lower case letters, e.g., $\x$, and matrices by regular upper case letters, e.g., $X$. The transpose of a real vector $\x$ is denoted as $\x^\top$. For a vector $\x$, and a matrix $X$, $\left\|\x\right\|$ and $\left\|X\right\|$ denote the $\ell_2$ norm and the matrix spectral norm, respectively. $\nabla f(\x)$ and $\nabla^2 f(\x)$ are respectively the gradient and the Hessian of $f$ at $\x$, and $\BI$ denotes the identity matrix. For two symmetric matrices $A$ and $B$, $A \succeq B$ indicates that $A-B$ is symmetric positive semi-definite. The subscript, e.g., $\x_i$, denotes iteration counter. $\log(x)$ denotes the natural logarithm of $x$. The inexact Hessian is denoted by $H(\x)$, but for notational simplicity, we also use $H_i$ to denote the inexact Hessian evaluated at the iterate $\x_i$ in iteration $i$, i.e., $H_i\triangleq H(\x_i)$.
	
	The rest of the paper is organized as follows. In Section \ref{Section2:Notation}, we introduce notations and assumptions used throughout this paper, and present our general framework in Section \ref{Section2:Framework}. Then the specializations to cubic regularized Newton's method with exact/inexact Hessian matrix and gradient descent method
	are presented in Sections \ref{Section:AARC}, \ref{Section:AARCQ} and \ref{Section:AAGD} respectively. In Section \ref{Section5:Experiment}, we present some preliminary numerical results on solving Regularized Logistic Regression, where acceleration of the method based on the adaptive cubic regularization for Newton's method is clearly observed. The details of all the proofs can be found in the appendix.
	

	\section{A Unified Adaptive Acceleration Framework}
	In this section, we first introduce the main definitions and assumptions used in the paper, and then present our unified adaptive acceleration framework.

	\subsection{Assumptions} \label{Section2:Notation}

	Throughout this paper, we refer to the following definition of  $\epsilon$-optimality.
	\begin{definition}
		($\epsilon$-optimality). Given $\epsilon\in\left(0,1\right)$, $\x\in\br^d$ is said to be an $\epsilon$-optimal solution to problem~\eqref{Prob:main}, if
		\begin{equation}\label{result:optimality}
		f(\x) - f(\x^*) \leq \epsilon,
		\end{equation}
		where $\x^*\in\br^d$ is the global optimal solution to problem~\eqref{Prob:main}.
	\end{definition}
	
	To proceed, 
	we make the following standard assumption regarding the gradient and Hessian of the objective function $f$.
	\begin{assumption}\label{Assumption-Objective-Gradient-Hessian}
		The objective function $f(\x)$ in problem~\eqref{Prob:main} is convex and twice differentiable with the gradient and the Hessian being both Lipschitz continuous, i.e., there are $0<L_g, L_h<\infty$ such that for any $\x, \y\in\br^d$ we have 
		\begin{align}
		\left\| \nabla f(\x) - \nabla f(\y)\right\| \leq &  \ L_g\left\|\x-\y\right\|, \label{Def:Lipschitz-Gradient} \\
		\left\| \nabla^2 f(\x) - \nabla^2 f(\y)\right\| \leq &  \ L_h\left\|\x-\y\right\|. \label{Def:Lipschitz-Hessian}
		\end{align}
	\end{assumption}
	
	We also study the problem with a strongly convex objective defined as follows: 
	\begin{definition}\label{Assumption-Strongly-Convex}
		A function $f$ is said to be strongly convex if there is 
		$\mu>0$, such that for any $\x, \y\in\br^d$ we have 
		\begin{equation}\label{Def:Strongly-Convex}
		f(\y) - f(\x) - \left(\y-\x\right)^\top\nabla f(\x) \geq \frac{\mu}{2}\left\|\y-\x\right\|^2.
		\end{equation}
	\end{definition}
	
	\subsection{Framework} \label{Section2:Framework}
	The adaptive acceleration framework is composed of two separate subroutines. Specifically, the framework starts with a \textsf{Simple Adaptive Subroutine (SAS)}, which terminates as soon as one successful iteration is identified. Then, the output of \textsf{SAS} is used as an initial point to run \textsf{Accelerated Adaptive Subroutine (AAS)} until a sufficient number of successful iterations are recorded. The details of our framework are summarized in Table \ref{Scheme: UAA}.
	
	\begin{table}[!t]
		\begin{tabular}{@{}llr@{}}\toprule
			{\bf Begin Phase I:} {\textsf{Simple Adaptive Subroutine (SAS)}}\\
			\quad \textbf{for} $i = 0,1,\cdots $, \textbf{do}\\
			\quad \qquad Construct certain regularized function $m(\x_i,\s,\sigma_i)$ with a regularization parameter $\sigma_i$;\\
			\quad \qquad Compute $\s_i$ by solving $m(\x_i,\s,\sigma_i)$ approximately or exactly;\\
			\quad \qquad {\bf if} iteration $i$ is successful {\bf then}\\
			\quad \qquad \quad Set $\x_{i+1} = \x_i + \s_i $ and update $\sigma_{i+1}$; \\
			\quad \qquad \quad  Record the total number of iterations for {\bf SAS}: $T_1 = i+1$; \\
			\quad \qquad \quad  $\textbf{break}$;\\
			\quad \qquad {\bf else} \\
			\quad \qquad \quad Set $\x_{i+1} = \x_i$, and update $\sigma_{i+1}$. \\
			\quad    \qquad {\bf end if}\\
			\quad \textbf{end for}\\
			\vspace{0.5cm}
			{\bf End Phase I (SAS)} \\
			{\bf Begin Phase II:} {\textsf{Accelerated Adaptive Subroutine (AAS)}}\\
			Set the count of successful iterations $l=1$ and let $\bar{\x}_1 = \x_{T_1}$;\\
			Construct auxiliary function $\psi_{1}(\z, \varsigma_1)$ with some $\varsigma_1>0$, and let $\z_1= \argmin_{\z\in\br^d} \psi_1(\z, \varsigma_1)$, \\ and choose $\y_1=\alpha_1 \bar{\x}_1 + (1 - \alpha_1 )\z_1$; \\
			\quad \textbf{for} $j = 0,1,\cdots $, \textbf{do}\\
			\quad \qquad Construct regularized function $m(\y_l,\s,\sigma_{T_1+j})$ with regularized parameter $\sigma_{T_1+j}$;\\
			\quad \qquad Compute $\s_{T_1+j}$ by solving $m(\y_l,\s,\sigma_{T_1+j})$ approximately or exactly;\\
			\quad \qquad {\bf if} iteration $T_1+j$ is successful {\bf then} \\
			\quad \qquad \quad Update $\sigma_{T_1+j+1}$ and set $\x_{T_1+j+1} = \x_{T_1+j} + \s_{T_1+j} $; \\
			\quad \qquad \quad  Update the count of successful iterations $l = l +1$; \\
			\quad \qquad \quad Update the auxiliary function $\psi_{l}(\z, \varsigma_l)$ by choosing the regularization parameter $\varsigma_l$ automatically; \\
			\quad \qquad \quad  Solve $\z_l= \argmin_{\z \in \br^d} \psi_l(\z)$, let $\bar{\x}_l = \x_{T_1+j+1}$ and $\y_l=\alpha_l \bar{\x}_l + (1 - \alpha_l )\z_l$;\\
			\quad \qquad {\bf else} \\
			\quad \qquad \quad Set $\x_{T_1+j+1} = \x_{T_1+j}$ and update $\sigma_{T_1+j+1}$; \\
			\quad \qquad {\bf end if}\\
			\quad \textbf{end for}\\
			\quad Record the total number of iterations for {\bf AAS}: $T_2 = j+1$. \\
			{\bf End Phase II (AAS)} \\
			\hline
		\end{tabular}
		\caption{Unified Adaptive Acceleration Framework}\label{Scheme: UAA}
	\end{table}
	
	Note that certain adaptive strategies are adopted to tune the regularization parameters in both $m(\x_i,\s,\sigma_i)$ and $\psi_l(\z, \varsigma_l)$ while the acceleration is only installed in \textsf{AAS}, where the tuple $\left(\bar{\x}_l, \y_l, \z_l\right)$ is updated when a successful iteration is identified. In addition, the criteria for identifying the successful iteration in each subroutine are different. When specialized to cubic regularization for Newton's method, \textsf{SAS} can be interpreted as the initialization step based on a modification of {\it adaptive cubic regularization method} proposed in \cite{Cartis-2011-Adaptive-I, Cartis-2011-Adaptive-II}.
	
	For the three algorithms mentioned above, the specific forms of regularized function $m(\x,\s,\sigma)$ are presented in Table \ref{tab:function}, and the iterative update rule for auxiliary function $\psi_l (\z)$ and the accelerating coefficient $\alpha_l$ are presented in Table \ref{tab:auxiliary}. In the rest of the paper, we shall analyze these three specialized algorithms within the framework just introduced. 

	\begin{table}[t]
		\center
		\begin{tabular}{|c|c|} \hline
			Method & $m(\x_i, \s,\sigma_i)$ \\ \hline
			{Algorithm \ref{Algorithm: AARC}} & $f(\x_i) + \s^\top\nabla f(\x_i) + \frac{1}{2}\s^\top \nabla^2 f(\x_i) \s + \frac{1}{3}\sigma_i\left\|\s\right\|^3$ \\ \hline
			{Algorithm \ref{Algorithm: AARCQ}} & $f(\x_i) + \s^\top\nabla f(\x_i) + \frac{1}{2}\s^\top H(\x_i)\s + \frac{1}{3}\sigma_i\left\|\s\right\|^3$ \\ \hline
			{Algorithm \ref{Algorithm: AAGD}} & $f(\x_i) + \s^\top\nabla f(\x_i) + \frac{1}{2}\sigma_i\left\|\s\right\|^2$ \\  \hline
		\end{tabular}
		\caption{Specific choices of $m(\x_i,\s,\sigma_i)$}\label{tab:function}
	\end{table}
	
	\begin{table}[h]
		\center
		\begin{tabular}{|c|c|c|} \hline
			Method & $\psi_l (\z)$ & $\alpha_l$\\ \hline
			{Algorithm \ref{Algorithm: AARC}} & $\psi_{l-1}(\z) + \frac{l(l+1)}{2}\left(f(\bar{\x}_{l-1})+\left(\z - \bar{\x}_{l-1}\right)^\top\nabla f(\bar{\x}_{l-1})\right) + \frac{1}{6}(\varsigma_{l}-\varsigma_{l-1})
			\|\z - \bar{\x}_1\|^3$ & $\frac{l}{l+3} $ \\ \hline
			{Algorithm \ref{Algorithm: AARCQ}} & $\psi_{l-1}(\z) + \frac{l(l+1)}{2}\left(f(\bar{\x}_{l-1})+\left(\z - \bar{\x}_{l-1}\right)^\top\nabla f(\bar{\x}_{l-1})\right) + \frac{1}{6}(\varsigma_{l}-\varsigma_{l-1}) \|\z - \bar{\x}_1\|^3$ & $\frac{l}{l+3}$ \\ \hline
			{Algorithm \ref{Algorithm: AAGD}}& $\psi_{l-1}(\z) + l \left(f(\bar{\x}_{l-1})+\left(\z - \bar{\x}_{l-1}\right)^\top\nabla f(\bar{\x}_{l-1})\right) + \frac{1}{4}(\varsigma_{l}-\varsigma_{l-1})\| \z - \bar{\x}_1\|^2$ & $\frac{l}{l+2}$ \\  \hline
		\end{tabular}
		\caption{Specific choices of $\psi_l (\z)$ and $\alpha_l$} \label{tab:auxiliary}
	\end{table}
	
	\section{Accelerated Adaptive Cubic Regularization with Exact Hessian}\label{Section:AARC}
	As illustrated in Table \ref{tab:function}, we consider the following approximation of $f$ evaluated at $\x_i$ with cubic regularization \cite{Cartis-2011-Adaptive-I, Cartis-2011-Adaptive-II}:
	\begin{equation}\label{prob:AARC}
	m(\x_i,\s,\sigma) = f(\x_i) + \s^\top\nabla f(\x_i) + \frac{1}{2}\s^\top\nabla^2 f(\x_i)\s + \frac{1}{3}\sigma_i\left\|\s\right\|^3,
	\end{equation}
	where $\sigma_i>0$ is a regularized parameter adjusted by the algorithm in the process of iterating. Now we present
	the accelerated adaptive cubic regularization for Newton's method with exact Hessian in Algorithm \ref{Algorithm: AARC}.
	\begin{algorithm}[t]
		\begin{algorithmic} \scriptsize
			\STATE Given $\gamma_2>\gamma_1>1$, $\gamma_3>1$, $\eta>0$ and $\sigma_{\min}>0$.
			Specify $m(\x_i,\s,\sigma_i)$ as in Table \ref{tab:function}.
			Choose $\x_0\in\br^d$, $\sigma_0 \ge \sigma_{\min}$, and $\varsigma_1>0$.
			\STATE \textbf{Begin Phase I:} \textsf{Simple Adaptive Subroutine (SAS)}
			\FOR{$i=0,1,2,\ldots$}
			\STATE \quad Compute $\s_i\in\br^d$ such that $\s_i\approx\argmin_{\s\in\br^d} \ m(\x_i,\s,\sigma_i)$;
			\STATE \quad Compute $\rho_i=f(\x_i+\s_i)-m(\x_i,\s_i, \sigma_i)$.
			\IF{$\rho_i<0$ [successful iteration] }
			\STATE \quad $\x_{i+1}=\x_i+\s_i$, \; $\sigma_{i+1}\in\left[\sigma_{\min}, \sigma_i\right]$;
			\STATE \quad Record the total number of iterations for \textsf{SAS}: $T_1 = i+1$.
			\STATE \quad $\textbf{break}$.
			\ELSE
			\STATE \quad $\x_{i+1}=\x_i$, \; $\sigma_{i+1}\in\left[\gamma_1\sigma_i, \gamma_2\sigma_i\right]$.
			\ENDIF
			\ENDFOR
			\STATE \textbf{End Phase I (SAS).}
			
			\vspace{0.5cm}
			\STATE \textbf{Begin Phase II:} \textsf{Accelerated Adaptive Subroutine (AAS)}\\
			Set the count of successful iterations $l=1$ and let $\bar{\x}_1 = \x_{T_1}$;\\
			Construct $\psi_1(\z) = f(\bar{\x}_1) + \frac{1}{6}\varsigma_1\|\z-\bar{\x}_1\|^3$, and let $\z_1= \argmin_{\z\in\br^d} \psi_1(\z)$, and choose $\y_1=\frac{1}{4}\bar{\x}_1 + \frac{3}{4}\z_1$;
			\FOR{$j=0,1,2\ldots$}
			\STATE Compute $\s_{T_1+j}\in\br^d$ such that $\s_{T_1+j}\approx\argmin_{\s\in\br^d} \ m(\y_l,\s,\sigma_{T_1+j})$, and $\rho_{T_1+j}=-\frac{\s_{T_1+j}^\top\nabla f(\y_l+\s_{T_1+j})}{\left\|\s_{T_1+j}\right\|^3}$;
			\IF{$\rho_{T_1+j}\geq\eta$ [successful iteration]}
			\STATE $\x_{T_1+j+1}=\y_l+\s_{T_1+j}$,\;  $\sigma_{T_1+j+1}\in\left[\sigma_{\min}, \sigma_{T_1+j}\right]$;
			\STATE Set $l=l+1$ and $\varsigma= \varsigma_{l-1}$;
			\STATE Update $\psi_l(\z)$ as illustrated in Table \ref{tab:auxiliary} by using $\varsigma_l=\varsigma$, and compute $\z_l=\argmin_{\z\in\br^d} \ \psi_l(\z)$;
			\WHILE{$\psi_l(\z_l)\geq\frac{l(l+1)(l+2)}{6} f(\bar{\x}_l)$}
			\STATE Set $\varsigma = \gamma_3 \varsigma$, and $\psi_l(\z)=\psi_{l-1}(\z)+\frac{l(l+1)}{2}\left[f(\x_{T_1+j+1})+\left(\z - \x_{T_1+j+1}\right)^\top\nabla f(\x_{T_1+j+1})\right]+\frac{1}{6}(\varsigma-\varsigma_{l-1})\|\z -\bar{\x}_1 \|^3$;
			\STATE Compute $\z_l = \argmin_{\z\in\br^d} \ \psi_l(\z)$.
			\ENDWHILE
			\STATE Set $\varsigma_l=\varsigma$;
			\STATE Let $\bar{\x}_l = \x_{T_1+j+1} $ and $\y_l=\frac{l}{l+3}\bar{\x}_l + \frac{3}{l+3}\z_l$;
			\ELSE
			\STATE $\x_{T_1+j+1}=\x_{T_1+j}$,\, $\sigma_{T_1+j+1}\in\left[\gamma_1\sigma_{T_1+j}, \gamma_2\sigma_{T_1+j}\right]$;
			\ENDIF
			
			\ENDFOR
			\STATE Record the total number of iterations for {\bf AAS}: $T_2 = j+1$.
			\STATE \textbf{End Phase II (AAS)} 
		\end{algorithmic} \caption{Accelerated Adaptive Cubic Regularization for Newton's Method with Exact Hessian}\label{Algorithm: AARC}
	\end{algorithm}

	Note that in each iteration of Algorithm \ref{Algorithm: AARC}, we approximately solve
	\begin{equation*}
	\s_i\approx\argmin_{\s\in\br^d} \ m(\x_i,\s,\sigma_i),
	\end{equation*}
	where $m(\x_i,\s,\sigma_i)$ is defined in \eqref{prob:AARC}
	and the symbol ``$\approx$'' is quantified as follows:
	\begin{condition}\label{Cond:Approx_Subprob}
		We call $\s_i$ to be an approximative solution -- denoted as $\s_i\approx\argmin_{\s\in\br^d} \ m(\x_i,\s,\sigma_i)$ -- for $\min_{\s\in\br^d} \ m(\x_i,\s,\sigma_i)$, if the following holds 
		\begin{equation}\label{Eqn:Approx_Subprob}
		\left\|\nabla m(\x_i, \s_i, \sigma_i)\right\| \leq \kappa_\theta\min\left(1, \left\|\s_i\right\|\right)
		\min\left(\left\|\s_i\right\|, \left\|\nabla f(\x_i)\right\|\right),
		\end{equation}
		where $\kappa_\theta\in\left(0,1\right)$ is a pre-specified  constant.
	\end{condition}
	Note that \eqref{Eqn:Approx_Subprob} is also used as one of the two stopping criteria for solving the subproblem in the original adaptive cubic regularization for Newton's method in \cite{Cartis-2012-Evaluation}. However, the other criterion
	\begin{equation}\label{Eqn:Approx2_Subprob}
	\s_i^\top \nabla f(\x_i) + \s_i^\top \nabla^2 f(\x_i) \s_i + \sigma_i\left\|\s_i\right\|^3 = 0
	\end{equation}
	is not needed in Algorithm \ref{Algorithm: AARC}.
	Another difference is that both criteria for the successful iterations in \textsf{SAS} and \textsf{AAS} of Algorithm \ref{Algorithm: AARC} are different than these used in \cite{Cartis-2012-Evaluation}.
	
	From the standpoint of acceleration, we shall show that Algorithm \ref{Algorithm: AARC} will retain the same iteration complexity of $O\left( 1 / \epsilon^{1/3} \right)$ as for the nonadaptive version of \cite{Nesterov-2008-Accelerating} even when the subproblem is now only solved approximatively. On the surface, under the new scheme  we need to solve an additional cubic subproblem:
	\begin{equation*}
	\z_l=\argmin_{\z\in\br^d} \ \psi_l(\z).
	\end{equation*}
	Fortunately, this problem admits a closed-form solution. In particular, recall that the objective function is obtained by using the updating rule in Table \ref{tab:auxiliary}, and so
	\begin{equation*}
	\psi_l(\z) = \ell_l(\z) + \frac{1}{6}\varsigma_l \|\z-\bar{\x}_1\|^3, \quad l = 1,2,\ldots,
	\end{equation*}
	where $\ell_{l}(\z)$ is a certain linear function of $\z$. By letting
	\begin{equation*}
	\nabla \psi_l(\z) = \nabla\ell_l(\z) +  \frac{1}{2}\varsigma_l \|\z-\bar{\x}_1\| \cdot(\z-\bar{\x}_1) = 0,
	\end{equation*}
	we have $\|\z-\bar{\x}_1\| = \sqrt{\frac{2}{\varsigma_l}\|\nabla \ell_l(\z) \|}$. Since $\ell_l(\z)$ is linear, $\nabla\ell_l(\z)$ is independent of $\z$. Therefore, we have
	\begin{equation*}
	\z_l=\bar{\x}_1-\sqrt{\frac{2}{\varsigma_l \| \nabla\ell_l(\z)\|}} {\nabla\ell_l(\z)}.
	\end{equation*}
	
	\subsection{The Convex Case}
	
	In this subsection, we aim to analyze the theoretical performance of  Algorithm \ref{Algorithm: AARC} when the objective function is convex.
	
	\subsubsection{Sketch of the Proof}\label{ProofOutlines}

	To give a holistic picture of the proof, we sketch some major steps below.

	\begin{center}
		\shadowbox{\begin{minipage}{6in}
				\textbf{Proof Outline:}
				\begin{enumerate}
					\item We denote $T_1$ to be the total number of iterations in \textsf{SAS}. Note that the criterion for the sucessfuly iteration in \textsf{SAS} will be satisfied when $\sigma_i$ is sufficiently large. Then $T_1$ is bounded above by some constant (Lemma \ref{Lemma:AARC-T1}).
					\item We denote $T_2$ by the total number of iterations in \textsf{AAS}, and
					\begin{equation*}
					\SCal = \left\{j\leq T_2: T_1+j \text{ successful iteration}\right\}
					\end{equation*}
					to be the index set of all successful iterations in \textsf{AAS}.
					Then $T_2$ is bounded above by $|\SCal|$ multiplied by some constant (Lemma \ref{Lemma:AARC-T2}).
					\item We denote $T_3$ by the total number of counts successfully updating $\varsigma>0$, and $T_3$ is upper bound by some constant (Lemma \ref{Lemma:AARC-T3}).
					
					\item We relate the objective function to the count of successful iterations in \textsf{AAS} (Theorem \ref{Theorem:AARC-Main}).
					\item Putting all the pieces together, we obtain an iteration complexity result (Theorem \ref{Thm:AARC-Main}).
				\end{enumerate}
		\end{minipage}}
	\end{center}

	\subsubsection{Bound the Iteration Numbers}
	\begin{lemma}\label{Lemma:AARC-T1}
		Letting $\bar{\sigma}_1= \max\left\{\sigma_0, \frac{ \gamma_2 L_h}{2}\right\} > 0$, we have $T_1\leq 1+\frac{2}{\log\left(\gamma_1\right)}\log\left(\frac{\bar{\sigma}_1}{\sigma_{\min}}\right)$.
	\end{lemma}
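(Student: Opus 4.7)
The plan is to identify a sufficient condition on $\sigma_i$ that forces iteration $i$ of \textsf{SAS} to be a successful one (i.e., $\rho_i < 0$), and then to count the unsuccessful iterations preceding the first success via the geometric growth rule $\sigma_{i+1} \geq \gamma_1 \sigma_i$.

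The key estimate is the standard cubic majorization obtained from the Lipschitz-Hessian assumption \eqref{Def:Lipschitz-Hessian}: by integrating a third-order Taylor expansion one gets, for any $\x_i,\s_i$,
\[
f(\x_i + \s_i) \leq f(\x_i) + \s_i^\top \nabla f(\x_i) + \tfrac{1}{2}\s_i^\top \nabla^2 f(\x_i)\s_i + \tfrac{L_h}{6}\|\s_i\|^3.
\]
Subtracting the definition \eqref{prob:AARC} of $m(\x_i,\s_i,\sigma_i)$ yields $\rho_i \leq \left(\tfrac{L_h}{6} - \tfrac{\sigma_i}{3}\right)\|\s_i\|^3$, which is strictly negative whenever $\sigma_i > L_h/2$ and $\s_i \neq 0$. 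The degenerate case $\s_i=0$ can be ruled out away from optimality: setting $\s_i=0$ in \eqref{Eqn:Approx_Subprob} forces $\|\nabla f(\x_i)\|=0$, so $\x_i$ is already a global minimizer and \textsf{SAS} would not iterate further. Hence, whenever $\sigma_i$ exceeds $L_h/2$, the test $\rho_i < 0$ is met and the subroutine \texttt{break}s.

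Next I would bound the number of consecutive unsuccessful iterations. By the unsuccessful update rule, $\sigma_{i+1} \in [\gamma_1 \sigma_i, \gamma_2 \sigma_i]$, so after $k$ failures $\sigma_k \geq \gamma_1^{k}\sigma_0 \geq \gamma_1^{k}\sigma_{\min}$. On the other hand, since an unsuccessful iteration can only occur when $\sigma_i \leq L_h/2$, the inflation factor $\gamma_2$ guarantees that throughout Phase I we have $\sigma_i \leq \max\{\sigma_0, \gamma_2 L_h/2\} = \bar{\sigma}_1$. Combining these two observations, any run of $k$ unsuccessful iterations must satisfy $\gamma_1^{k}\sigma_{\min} \leq \bar{\sigma}_1$, hence $k \leq \log(\bar{\sigma}_1/\sigma_{\min})/\log \gamma_1$. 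Adding the single terminating successful step and absorbing a factor-of-two slack (accounting for the distinction between the upper bound $\bar{\sigma}_1$ on $\sigma_i$ during a failed step and the threshold $L_h/2$ that actually triggers success, together with the final successful iteration) yields the stated bound $T_1 \leq 1 + \tfrac{2}{\log \gamma_1}\log(\bar{\sigma}_1/\sigma_{\min})$.

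The main obstacle is really just careful bookkeeping of the multiplicative constants: one must track how $\gamma_2$ from the inflation step and $\log \gamma_1$ from the counting interact, and confirm that the approximate-solve tolerance in Condition~\ref{Cond:Approx_Subprob} never distorts the Lipschitz bound above (the majorization holds for arbitrary $\s_i$, so the approximation only matters through the harmless edge case $\s_i=0$). No new analytic ideas are needed beyond the two displayed inequalities.
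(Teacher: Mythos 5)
Your proof uses the same core ingredients as the paper's — the cubic majorization from the Lipschitz Hessian, the observation that $\sigma_i$ exceeding $L_h/2$ forces success, and the geometric growth of $\sigma_i$ across failures bounded above by $\bar{\sigma}_1$ — and is correct. Your counting is in fact slightly tighter (you get $T_1 \le 1 + \frac{1}{\log\gamma_1}\log(\bar{\sigma}_1/\sigma_{\min})$ directly, whereas the paper's factor of $2$ arises because it telescopes $\sigma_{T_1}/\sigma_0$ including the possible drop to $\sigma_{\min}$ at the final successful step), and your explicit treatment of the degenerate case $\s_i=0$ via Condition~\ref{Cond:Approx_Subprob} patches a minor gap the paper leaves implicit.
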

	\begin{proof}
		We have
		\begin{eqnarray}
		f(\x_i + \s_i) & = & f(\x_i) + \s_i^\top \nabla f(\x_i) + \frac{1}{2} \s_i^\top \nabla^2 f(\x_i) \s_i + \int_{0}^{1} (1 - \tau) \s_i^\top \left[\nabla^2 f(\x_i + \tau \s_i) -  \nabla^2 f(\x_i)\right] \s_i \ d \tau \nonumber \\
		&\le & f(\x_i) + \s_i^\top \nabla f(\x_i) + \frac{1}{2} \s_i^\top \nabla^2 f(\x_i) \s_i  + \frac{ L_h}{6}  \| \s_i \|^3   \nonumber \\
		& = & m(\x_i, \s_i, \sigma_i) + \left(\frac{L_h}{6}-\frac{\sigma_i}{3}\right)\left\|\s_i\right\|^3, \label{Equality-Second-Order-AARC}
		\end{eqnarray}
		where the inequality holds true due to Assumption \ref{Assumption-Objective-Gradient-Hessian}. Therefore, we conclude that
		\begin{equation*}
		\sigma_i\geq\frac{L_h}{2} \quad \Longrightarrow \quad f(\x_i+\s_i)\leq m(\x_i, \s_i, \sigma_i),
		\end{equation*}
		which further implies that $\sigma_i<\frac{L_h}{2}$ for $i\le T_1-2$. Hence,
		\begin{equation*}
		\sigma_{T_1} \le \sigma_{T_1 -1}\le \gamma_2 \sigma_{T_1 -2} \le \frac{ \gamma_2 L_h}{2}.
		\end{equation*}
		By the definition that $\bar{\sigma}_1=\max\left\{\sigma_0, \frac{\gamma_2 L_h}{2}\right\}$, it follows from the construction of Algorithm \ref{Algorithm: AARC} that $\sigma_{\min}\leq\sigma_i$ for all iterations, and $\gamma_1\sigma_i\leq\sigma_{i+1}$ for all unsuccessful iterations. Consequently, we have
		\begin{equation*}
		\frac{\bar{\sigma}_1}{\sigma_{\min}} \geq \frac{\sigma_{T_1}}{\sigma_0} = \frac{\sigma_{T_1}}{\sigma_{T_1-1}} \cdot \prod_{j=0}^{T_1-2} \frac{\sigma_{j+1}}{\sigma_j} \geq \gamma_1^{T_1-1}\left(\frac{\sigma_{\min}}{\bar{\sigma}_1}\right),
		\end{equation*}
		and hence $T_1\leq 1+\frac{2}{\log\left(\gamma_1\right)}\log\left(\frac{\bar{\sigma}_1}{\sigma_{\min}}\right)$.
	\end{proof}
	\begin{lemma}\label{Lemma:AARC-T2}
		Letting $\bar{\sigma}_2 = \max\left\{\bar{\sigma}_1,\frac{\gamma_2 L_h}{2}+\gamma_2\kappa_\theta+\gamma_2\eta\right\}>0$, we have $T_2 \leq \left(1+ \frac{2}{\log\left(\gamma_1\right)} \log
		\left(\frac{\bar{\sigma}_2}{\sigma_{\min}}\right)\right)|\SCal|$.
	\end{lemma}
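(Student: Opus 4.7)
The plan is to mirror the two-step structure used in Lemma~\ref{Lemma:AARC-T1}: first establish a uniform upper bound $\sigma_{T_1+j}\leq\bar{\sigma}_2$ throughout \textsf{AAS}, then run a telescoping log-ratio count that trades unsuccessful steps against successful ones. The key new feature relative to Lemma~\ref{Lemma:AARC-T1} is that the acceptance test is now
\[
\rho_{T_1+j} = -\frac{\s_{T_1+j}^\top \nabla f(\y_l + \s_{T_1+j})}{\|\s_{T_1+j}\|^3} \geq \eta,
\]
rather than a plain model-decrease condition, and that both successful and unsuccessful iterations occur in sequence, so the count must disentangle the two.

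For the uniform bound, I would start from the identity $\nabla m(\y_l,\s,\sigma)=\nabla f(\y_l)+\nabla^2 f(\y_l)\s+\sigma\|\s\|\s$ to express $\nabla f(\y_l)+\nabla^2 f(\y_l)\s_{T_1+j}$, and then combine it with the Lipschitz Hessian estimate $\|\nabla f(\y_l+\s_{T_1+j})-\nabla f(\y_l)-\nabla^2 f(\y_l)\s_{T_1+j}\|\leq\frac{L_h}{2}\|\s_{T_1+j}\|^2$. After dividing by $\|\s_{T_1+j}\|^3$ and using Cauchy--Schwarz one obtains
\[
\rho_{T_1+j} \geq \sigma_{T_1+j} - \frac{\|\nabla m(\y_l,\s_{T_1+j},\sigma_{T_1+j})\|}{\|\s_{T_1+j}\|^2} - \frac{L_h}{2}.
\]
Condition~\ref{Cond:Approx_Subprob} bounds $\|\nabla m(\y_l,\s_{T_1+j},\sigma_{T_1+j})\|$ by $\kappa_\theta\min(1,\|\s_{T_1+j}\|)\min(\|\s_{T_1+j}\|,\|\nabla f(\y_l)\|)$; splitting into the two cases $\|\s_{T_1+j}\|\leq 1$ and $\|\s_{T_1+j}\|>1$ shows that in either regime $\|\nabla m\|/\|\s_{T_1+j}\|^2\leq\kappa_\theta$. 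Hence whenever $\sigma_{T_1+j}\geq L_h/2+\kappa_\theta+\eta$ the iteration is declared successful, and combined with the rule $\sigma_{i+1}\leq\gamma_2\sigma_i$ on unsuccessful steps and the bound $\sigma_{T_1}\leq\bar{\sigma}_1$ inherited from Lemma~\ref{Lemma:AARC-T1}, this produces $\sigma_{T_1+j}\leq\bar{\sigma}_2$ for every $j$.

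For the counting step, I would take logarithms of the multiplicative update $\sigma_{i+1}/\sigma_i$ and split the $T_2$ indices into $|\SCal|$ successful ones and $T_2-|\SCal|$ unsuccessful ones. On an unsuccessful step, $\log\sigma_{i+1}-\log\sigma_i\geq\log\gamma_1$; on a successful step, $\log\sigma_{i+1}-\log\sigma_i\geq\log(\sigma_{\min}/\bar{\sigma}_2)$ by the uniform bound just established. Telescoping over $i=T_1,\ldots,T_1+T_2-1$ and using $\log\sigma_{T_1+T_2}-\log\sigma_{T_1}\leq\log(\bar{\sigma}_2/\sigma_{\min})$ (from $\sigma_{T_1}\geq\sigma_{\min}$ and $\sigma_{T_1+T_2}\leq\bar{\sigma}_2$) yields
\[
(T_2-|\SCal|)\log\gamma_1\leq(|\SCal|+1)\log(\bar{\sigma}_2/\sigma_{\min}),
\]
and the trivial estimate $|\SCal|+1\leq 2|\SCal|$, valid once $|\SCal|\geq 1$, gives the claimed bound on $T_2$.

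The main obstacle is the first step: one must extract an absolute threshold on $\sigma_{T_1+j}$ from the \emph{relative} accuracy in Condition~\ref{Cond:Approx_Subprob}, so the two regimes for $\|\s_{T_1+j}\|$ must collapse into a single iterate-independent constant $\kappa_\theta$. Once that uniformity is in hand, the remainder is a standard adaptive-regularization counting argument.
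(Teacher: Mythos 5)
Your proposal is correct and follows essentially the same route as the paper's proof: you derive the same threshold $\sigma_{T_1+j}\geq\frac{L_h}{2}+\kappa_\theta+\eta\Rightarrow\rho_{T_1+j}\geq\eta$ (the paper bounds $\s^\top\nabla m\leq\kappa_\theta\|\s\|^3$ directly while you bound $\|\nabla m\|/\|\s\|^2\leq\kappa_\theta$ after Cauchy--Schwarz, which is the same estimate; your two-case split is unnecessary since $\min(1,\|\s\|)\min(\|\s\|,\|\nabla f\|)\leq\|\s\|^2$ holds uniformly), and the counting step is the paper's telescoping product written additively in logarithms. No gap.
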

	\begin{proof}
		We have
		{\small\begin{eqnarray*}
				& & \s_{T_1+j}^\top\nabla f(\y_l+\s_{T_1+j}) \\
				& = & \s_{T_1+j}^\top\left[\nabla f(\y_l + \s_{T_1+j}) - \nabla f(\y_l) - \nabla^2 f(\y_l) \s_{T_1+j}\right] + \s_{T_1+j}^\top\left[ \nabla f(\y_l) + \nabla^2 f(\y_l) \s_{T_1+j}\right] \\
				& \leq & \left\| \nabla f(\y_l+\s_{T_1+j}) - \nabla f(\y_l) - \nabla^2 f(\y_l) \s_{T_1+j}\right\| \left\| \s_{T_1+j}\right\|   + \s_{T_1+j}^\top \left[ \nabla m\left(\y_l, \s_{T_1+j}, \sigma_{T_1+j}\right) - \sigma_{T_1+j} \left\| \s_{T_1+j}\right\| \s_{T_1+j} \right]  \\
				& {\eqref{Eqn:Approx_Subprob} \above 0pt  \leq }  & \left\| \nabla f(\y_l+\s_{T_1+j}) - \nabla f(\y_l) - \nabla^2 f(\y_l) \s_{T_1+j}\right\| \left\| \s_{T_1+j}\right\| - \sigma_{T_1+j} \left\| \s_{T_1+j}\right\|^3 + \kappa_\theta \left\| \s_{T_1+j}\right\|^3 \\
				& = & \left\| \int_{0}^{1}  \left[ \nabla^2 f(\y_l + \tau \cdot \s_{T_1+j}) - \nabla^2 f(\y_l ) \right] \s_{T_1+j} \ d \tau \right\| \left\| \s_{T_1+j}\right\|  - \sigma_{T_1+j} \left\| \s_{T_1+j}\right\|^3 + \kappa_\theta \left\| \s_{T_1+j}\right\|^3 \\
				&  \leq  & \left(\frac{L_h}{2} + \kappa_\theta - \sigma_{T_1+j}\right)\left\|\s_{T_1+j}\right\|^3,
		\end{eqnarray*}}
		\noindent where the last inequality is due to Assumption \ref{Assumption-Objective-Gradient-Hessian}. Then it follows that
		\begin{equation*}
		-\frac{\s_{T_1+j}^\top\nabla f(\y_l+\s_{T_1+j})}{\left\| \s_{T_1+j}\right\|^3} \geq \sigma_{T_1+j} - \frac{L_h}{2} - \kappa_\theta.
		\end{equation*}
		Therefore, we have
		\begin{equation*}
		\sigma_{T_1+j} \geq \frac{L_h}{2}+\kappa_\theta+\eta \quad \Longrightarrow -\frac{\s_{T_1+j}^\top\nabla f(\y_l+\s_{T_1+j})}{\left\| \s_{T_1+j}\right\|^3}\geq\eta ,
		\end{equation*}
		which further implies that
		\begin{equation*}
		\sigma_{T_1+j+1}\leq \sigma_{T_1+j}\leq \gamma_2 \cdot \sigma_{T_1+j-1}\leq \gamma_2 \left(\frac{L_h}{2}+\kappa_\theta+\eta\right),\; \forall \; j \in \SCal.
		\end{equation*}
		Therefore, we can define $\bar{\sigma}_2=\max\left\{\bar{\sigma}_1,\frac{\gamma_2 L_h}{2}+\gamma_2\kappa_\theta+\gamma_2\eta\right\}$, where the term $\bar{\sigma}_1$ accounts for an upper bound of $\sigma_{T_1}$. In addition, it follows from the construction of Algorithm \ref{Algorithm: AARC} that $\sigma_{\min}\leq\sigma_{T_1+j}$ for all iterations, and $\gamma_1\sigma_{T_1+j}\leq\sigma_{T_1+j+1}$ for all unsuccessful iterations. Therefore, we have
		\begin{equation*}
		\frac{\bar{\sigma}_2}{\sigma_{\min}} \geq \frac{\sigma_{T_1+T_2}}{\sigma_{T_1}} = \prod_{j\in\SCal} \frac{\sigma_{T_1+j+1}}{\sigma_{T_1+j}} \cdot \prod_{j\notin\SCal} \frac{\sigma_{T_1+j+1}}{\sigma_{T_1+j}} \geq \gamma_1^{T_2-|\SCal|}\left(\frac{\sigma_{\min}}{\bar{\sigma}_2}\right)^{|\SCal|},
		\end{equation*}
		and hence
		\begin{equation*}
		|\SCal| \le T_2\leq |\SCal|+\frac{\left(|\SCal|+1\right)}{\log\gamma_1} \log\left(\frac{\bar{\sigma}_2}{\sigma_{\min}}\right) \leq \left(1+ \frac{2}{\log\gamma_1} \log\left(\frac{\bar{\sigma}_2}{\sigma_{\min}}\right)\right)|\SCal|.
		\end{equation*}
	\end{proof}
	Before estimating the upper bound of $T_3$, i.e., the total number of the count of successfully updating $\varsigma>0$, we need the following three technical lemmas.
	\begin{lemma}\label{Lemma:General-Second-Order}
		For any $\s\in\br^d$ and $\g\in\br^d$, it holds that
		\begin{equation*}
		\s^\top\g + \frac{1}{3}\sigma\left\|\s\right\|^3 \geq -\frac{2}{3\sqrt{\sigma}}\left\|\g\right\|^{\frac{3}{2}}.
		\end{equation*}
	\end{lemma}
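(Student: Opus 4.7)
The plan is to reduce the vector inequality to a one-dimensional minimization via Cauchy–Schwarz, and then solve the resulting cubic minimization in closed form.

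First I would apply the Cauchy–Schwarz inequality to lower-bound the linear term: $\s^\top\g \geq -\|\s\|\,\|\g\|$. Substituting and setting $t := \|\s\|\geq 0$, the desired inequality reduces to the scalar statement
\begin{equation*}
h(t) := -t\,\|\g\| + \frac{\sigma}{3}\,t^{3} \;\geq\; -\frac{2}{3\sqrt{\sigma}}\,\|\g\|^{3/2} \qquad \text{for all } t\geq 0.
\end{equation*}
Since $\sigma>0$, the cubic $h(t)$ tends to $+\infty$ as $t\to\infty$ and its derivative $h'(t)=-\|\g\|+\sigma t^{2}$ has a unique nonnegative root $t^{\ast}=\sqrt{\|\g\|/\sigma}$ (if $\g=0$ the bound is trivial, so assume $\|\g\|>0$). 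Hence the minimum of $h$ on $[0,\infty)$ is attained at $t^{\ast}$.

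Next I would simply evaluate $h$ at $t^{\ast}$:
\begin{equation*}
h(t^{\ast}) = -\sqrt{\frac{\|\g\|}{\sigma}}\cdot\|\g\| + \frac{\sigma}{3}\left(\frac{\|\g\|}{\sigma}\right)^{3/2} = -\frac{\|\g\|^{3/2}}{\sqrt{\sigma}} + \frac{\|\g\|^{3/2}}{3\sqrt{\sigma}} = -\frac{2}{3\sqrt{\sigma}}\,\|\g\|^{3/2},
\end{equation*}
which gives the claimed bound. There is essentially no obstacle here: the only mild point worth mentioning is that Cauchy–Schwarz is tight precisely when $\s$ is a nonpositive multiple of $\g$, so equality in the lemma is achieved at $\s=-\sqrt{\|\g\|/\sigma}\,\g/\|\g\|$ (consistent with the closed-form expression for $\z_l$ derived earlier in the paper). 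This confirms that the constant $\tfrac{2}{3}$ in the bound is sharp.
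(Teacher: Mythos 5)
Your proof is correct and takes essentially the same approach as the paper: both identify the minimizer of $\s^\top\g + \tfrac{1}{3}\sigma\|\s\|^3$ and evaluate the expression there. The paper works directly with the vector first-order condition $\g + \sigma\|\s^*\|\s^* = 0$, whereas you first reduce to a scalar cubic via Cauchy--Schwarz; this is a cosmetic difference, and your version is if anything slightly more explicit about why the optimal $\s$ is anti-parallel to $\g$.
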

	\begin{proof}
		Denote $\s^*$ as the minimum of $\s^\top\g + \frac{1}{3}\sigma\left\|\s\right\|^3$. The first-order optimality condition gives that
		\begin{equation*}
		\g+\sigma\left\|\s^*\right\|\s^*=0.
		\end{equation*}
		Therefore, we have $(\s^*)^\top\g = -\sigma\left\|\s^*\right\|^3$ and $\left\|\g\right\|=\sigma\left\|\s^*\right\|^2$, and
		\begin{equation*}
		(\s^*)^\top\g+\frac{1}{3}\sigma\left\|\s^*\right\|^3 = -\frac{2}{3}\sigma\left\|\s^*\right\|^3 = -\frac{2}{3\sqrt{\sigma}}\left\|\g\right\|^{\frac{3}{2}}.
		\end{equation*}
	\end{proof}
	
	\begin{lemma} \label{Lemma:AARC-T3-P1}
		Letting $\z_l=\argmin\limits_{\z\in\br^d} \ \psi_l(\z)$, we have $\psi_l(\z) - \psi_l(\z_l) \geq \frac{1}{12}\varsigma_l\left\|\z-\z_l\right\|^3$.
	\end{lemma}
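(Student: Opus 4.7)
The plan is to decompose $\psi_l$ as an affine piece plus a cubic regularizer and exploit the uniform convexity (of order $3$) of the latter. Recall from the discussion preceding the lemma that $\psi_l(\z) = \ell_l(\z) + \frac{\varsigma_l}{6}\|\z - \bar{\x}_1\|^3$, where $\ell_l$ is an affine function of $\z$. Since $\z_l$ minimizes the smooth strictly convex $\psi_l$, the first-order optimality condition reads
\begin{equation*}
\nabla\ell_l + \frac{\varsigma_l}{2}\|\z_l - \bar{\x}_1\|(\z_l - \bar{\x}_1) \;=\; 0.
\end{equation*}
Using the affinity of $\ell_l$ I then expand
\begin{equation*}
\psi_l(\z) - \psi_l(\z_l) \;=\; \nabla\ell_l^\top(\z - \z_l) + \frac{\varsigma_l}{6}\bigl(\|\z - \bar{\x}_1\|^3 - \|\z_l - \bar{\x}_1\|^3\bigr),
\end{equation*}
and substitute the first-order condition. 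Setting $\mathbf{u} := \z_l - \bar{\x}_1$ and $\mathbf{v} := \z - \bar{\x}_1$ (so $\mathbf{v} - \mathbf{u} = \z - \z_l$), the claim reduces to the pointwise inequality
\begin{equation*}
\|\mathbf{v}\|^3 - \|\mathbf{u}\|^3 - 3\|\mathbf{u}\|\, \mathbf{u}^\top(\mathbf{v} - \mathbf{u}) \;\ge\; \tfrac{1}{2}\|\mathbf{v} - \mathbf{u}\|^3, \qquad \forall\, \mathbf{u}, \mathbf{v} \in \br^d.
\end{equation*}

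This bound is precisely the uniform convexity (of degree $3$) of $\x \mapsto \|\x\|^3$, a classical estimate underlying the cubic regularization framework \cite{Nesterov-2006-Cubic, Nesterov-2008-Accelerating}. To verify it I would start from the second-order integral representation
\begin{equation*}
\|\mathbf{v}\|^3 - \|\mathbf{u}\|^3 - 3\|\mathbf{u}\|\, \mathbf{u}^\top(\mathbf{v}-\mathbf{u}) \;=\; \int_0^1 (1-t)\,(\mathbf{v}-\mathbf{u})^\top \nabla^2(\|\cdot\|^3)(\mathbf{u}+t(\mathbf{v}-\mathbf{u}))\,(\mathbf{v}-\mathbf{u})\,dt,
\end{equation*}
combined with the explicit Hessian formula $\nabla^2(\|\x\|^3) = 3\|\x\|\BI + 3\x\x^\top/\|\x\|$; multiplying the resulting estimate by $\varsigma_l/6$ then yields the advertised $\frac{\varsigma_l}{12}\|\z-\z_l\|^3$ bound.

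The delicate point is the last step. A crude argument using only $\nabla^2(\|\x\|^3) \succeq 3\|\x\|\BI$ is too loose, because the segment $\{\mathbf{u}+t(\mathbf{v}-\mathbf{u}):t\in[0,1]\}$ may pass through or near the origin, causing the scalar Hessian lower bound to vanish along the path. To close this gap I plan to retain the rank-one contribution $3\x\x^\top/\|\x\|$ and use rotational invariance to project onto the two-dimensional plane spanned by $\mathbf{u}$ and $\mathbf{v}$, reducing the claim to a scalar estimate in $\|\mathbf{u}\|$, $\|\mathbf{v}\|$, and $\|\mathbf{v}-\mathbf{u}\|$ (linked by the cosine identity $\|\mathbf{v}\|^2 = \|\mathbf{u}\|^2 + \|\mathbf{v}-\mathbf{u}\|^2 + 2\mathbf{u}^\top(\mathbf{v}-\mathbf{u})$) that can be checked by elementary calculus.
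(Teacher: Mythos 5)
Your proof matches the paper's: both cancel the linear part of $\psi_l$ and reduce the claim to the uniform-convexity estimate $\|\mathbf{v}\|^3 - \|\mathbf{u}\|^3 - 3\|\mathbf{u}\|\,\mathbf{u}^\top(\mathbf{v}-\mathbf{u}) \geq \tfrac{1}{2}\|\mathbf{v}-\mathbf{u}\|^3$, which the paper takes from Lemma~4 of \cite{Nesterov-2008-Accelerating} with $p=3$ and which you correctly identify and cite. Your additional sketch of a self-contained verification is unnecessary and is left unfinished, though your remark that the crude bound $\nabla^2(\|\x\|^3)\succeq 3\|\x\|\BI$ alone fails when the segment from $\mathbf{u}$ to $\mathbf{v}$ passes near the origin is a correct and nontrivial observation.
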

	\begin{proof}
		It suffices to show that
		\begin{equation*}
		\psi_l(\z) - \psi_l(\z_l) - \nabla\psi_l(\z_l)^\top (\z-\z_l) \geq \frac{1}{12}\varsigma_l\left\|\z-\z_l\right\|^3,
		\end{equation*}
		since $\z_l=\argmin_{\z\in\br^d} \ \psi_l(\z)$ and $\nabla\psi_l(\z_l)=0$. Furthermore, observe that $\psi_l(\z)=\ell_l(\z)+d(\z)$ where $\ell_l$ is a linear function and $d(\z)=\frac{\varsigma_l}{6}\left\|\z-\bar{\z}_1\right\|^3$. Therefore, it suffices to show that
		\begin{equation*}
		d(\z) - d(\z_l) - \nabla d(\z_l)^\top (\z-\z_l) \geq \frac{ \varsigma_l} {12}\left\|\z-\z_l\right\|^3,
		\end{equation*}
		since $\ell_l(\z) - \ell_l(\z_l) - \nabla \ell_l(\z_l)^\top (\z-\z_l) = 0$. The conclusion follows from Lemma 4 in \cite{Nesterov-2008-Accelerating} by letting $p=3$.
	\end{proof}
	
	\begin{lemma}\label{Lemma:AARC-T3-P2}
		For each iteration $j$ in the subroutine \textsf{AAS}, if it is successful, we have
		\begin{equation*}
		(1-\kappa_\theta)\left\|\nabla f(\x_{j+1})\right\| \leq \left(\frac{L_h}{2}+\bar{\sigma}_2 + \kappa_\theta L_g\right)\left\|\s_j\right\|^2,
		\end{equation*}
		where $\kappa_\theta\in(0,1)$ is used in Condition~\ref{Cond:Approx_Subprob}.
	\end{lemma}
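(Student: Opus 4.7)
The plan is to expand $\nabla f(\x_{j+1})$ around $\y_l$ using the gradient of the cubic model and then bound each resulting piece by a constant multiple of $\|\s_j\|^2$, plus a term proportional to $\|\nabla f(\x_{j+1})\|$ that will eventually be absorbed into the left-hand side. Since the iteration is successful, the update reads $\x_{j+1}=\y_l+\s_j$. Observing that, by the definition of $m(\y_l,\cdot,\sigma_j)$ in \eqref{prob:AARC}, one has $\nabla m(\y_l,\s_j,\sigma_j)=\nabla f(\y_l)+\nabla^2 f(\y_l)\s_j+\sigma_j\|\s_j\|\s_j$, I would set $R_j := \nabla f(\y_l+\s_j)-\nabla f(\y_l)-\nabla^2 f(\y_l)\s_j$ and rewrite
\[
\nabla f(\x_{j+1}) = R_j + \nabla m(\y_l,\s_j,\sigma_j) - \sigma_j\|\s_j\|\s_j,
\]
so that, by the triangle inequality,
\[
\|\nabla f(\x_{j+1})\| \le \|R_j\| + \|\nabla m(\y_l,\s_j,\sigma_j)\| + \sigma_j\|\s_j\|^2.
\]

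Each of the three pieces is then handled with tools already in place. For $R_j$, I would use the standard integral representation of the Hessian residual together with Lipschitz continuity of $\nabla^2 f$ from Assumption \ref{Assumption-Objective-Gradient-Hessian} to obtain $\|R_j\|\le\tfrac{L_h}{2}\|\s_j\|^2$, essentially reusing the computation already appearing in the proof of Lemma \ref{Lemma:AARC-T2}. The regularizer coefficient is controlled by $\sigma_j\le\bar\sigma_2$ via Lemma \ref{Lemma:AARC-T2}. For the middle term, I would invoke Condition \ref{Cond:Approx_Subprob} in the weakened form
\[
\|\nabla m(\y_l,\s_j,\sigma_j)\| \le \kappa_\theta\min(1,\|\s_j\|)\,\|\nabla f(\y_l)\|,
\]
obtained by keeping $\min(1,\|\s_j\|)$ and using $\min(\|\s_j\|,\|\nabla f(\y_l)\|)\le\|\nabla f(\y_l)\|$. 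I would then transfer the reference point from $\y_l$ to $\x_{j+1}$ via Lipschitz continuity of $\nabla f$, giving $\|\nabla f(\y_l)\|\le L_g\|\s_j\|+\|\nabla f(\x_{j+1})\|$ since $\x_{j+1}-\y_l=\s_j$.

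The one non-mechanical step — and where I expect the main (though minor) obstacle to lie — is the elementary inequality
\[
\min(1,\|\s_j\|)\bigl(L_g\|\s_j\|+\|\nabla f(\x_{j+1})\|\bigr) \le L_g\|\s_j\|^2 + \|\nabla f(\x_{j+1})\|,
\]
which is exactly what allows one to pull off a clean $(1-\kappa_\theta)$ factor on the left rather than the weaker $(1-\kappa_\theta\|\s_j\|)$ that a single-case argument would produce. I would verify it by a short split on $\|\s_j\|\le 1$ versus $\|\s_j\|>1$: in the former case $\min(1,\|\s_j\|)=\|\s_j\|$ and $\|\s_j\|\le 1$ tames the gradient term; in the latter $\min(1,\|\s_j\|)=1$ while $\|\s_j\|\le\|\s_j\|^2$ absorbs the Lipschitz term. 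Collecting all bounds and moving the resulting $\kappa_\theta\|\nabla f(\x_{j+1})\|$ to the left then yields $(1-\kappa_\theta)\|\nabla f(\x_{j+1})\|\le\bigl(\tfrac{L_h}{2}+\bar\sigma_2+\kappa_\theta L_g\bigr)\|\s_j\|^2$, as claimed.
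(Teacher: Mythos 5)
Your proof is correct and follows essentially the same route as the paper: triangle inequality splitting off the cubic-model gradient, then controlling the Hessian residual by $L_h$, the regularizer by $\bar\sigma_2$, and the model-gradient term via Condition \ref{Cond:Approx_Subprob} together with a transfer from $\|\nabla f(\y_l)\|$ to $\|\nabla f(\x_{j+1})\|$. The only cosmetic difference is that you resolve the $\min(1,\|\s_j\|)$ factor by a case split, whereas the paper simply uses $\min(1,\|\s_j\|)\le\|\s_j\|$ on the Lipschitz piece and $\min(1,\|\s_j\|)\le1$ on the gradient piece; both yield the identical bound.
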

	\begin{proof}
		We denote $j$-th iteration to be the $l$-th successful iteration, and note $\nabla_{\s} m(\y_l,\s_j,\sigma_j)=\nabla f(\y_l) + \nabla^2 f(\y_l)\s_j + \sigma_j\|\s_j\|\cdot\s_j$. Then we have
		\begin{eqnarray*}
			\left\|\nabla f(\x_{j+1})\right\| & \leq & \left\| \nabla f(\y_l+\s_j) - \nabla_{\s} m(\y_l, \s_j, \sigma_j)\right\| + \left\|\nabla_{\s} m(\y_l, \s_j, \sigma_{j})\right\| \\
			& \leq & \left\| \nabla f(\y_l+\s_j) - \nabla_{\s} m(\y_l, \s_j, \sigma_j)\right\| + \kappa_\theta\cdot\min\left(1, \left\|\s_j\right\|\right) \cdot \left\| \nabla f(\y_l)\right\| \\
			& \leq & \left\|\int_0^1\left(\nabla^2 f(\y_l+\tau\s_j)-\nabla^2 f(\y_l)\right) \s_j \ d\tau\right\| + \sigma_j\left\| \s_j\right\|^2 + \kappa_\theta\cdot\min\left(1, \left\|\s_j\right\|\right) \cdot \left\|\nabla f(\y_l)\right\| \\
			& \leq & \frac{L_h}{2} \left\|\s_j\right\|^2 + \sigma_j\left\| \s_j\right\|^2 + \kappa_{\theta}\cdot \| \s_j \| \cdot \left\|\nabla f(\y_l) - \nabla f(\y_l+\s_j)\right\| + \kappa_{\theta}\left\| \nabla f(\x_{j+1})\right\| \\
			& \leq & \frac{L_h}{2}\left\|\s_j\right\|^2 + \bar{\sigma}_2\left\|\s_j\right\|^2 + \kappa_\theta L_g\left\|\s_j\right\|^2 + \kappa_\theta\left\|\nabla f(\x_{j+1})\right\|,
		\end{eqnarray*}
		where the second inequality holds true due to Condition~\ref{Cond:Approx_Subprob}, and the last two inequality follow from Assumption \ref{Assumption-Objective-Gradient-Hessian}. Rearranging the terms, the conclusion follows.
	\end{proof}
	
	Now we are ready to estimate an upper bound of $T_3$, i.e., the total number of count of successfully updating $\varsigma>0$.
	\begin{lemma}\label{Lemma:AARC-T3}
		We have
		\begin{equation}\label{Inequality:induction}
		\psi_l(\z_l)\ge\frac{l(l+1)(l+2)}{6} f(\bar{\x}_l)
		\end{equation}
		if $\varsigma_l\ge\left(\frac{ L_h+2\bar{\sigma}_2+2\kappa_\theta L_g}{1-\kappa_\theta}\right)^{3} \frac{1}{\eta^2}$, which further implies that
		\begin{equation*}
		T_3 \le \left\lceil \frac{1}{\log\left(\gamma_3\right)}\log \left[\left(\frac{ L_h+2\bar{\sigma}_2+2\kappa_\theta L_g}{1-\kappa_\theta}\right)^{3} \frac{1}{\eta^2\varsigma_1} \right] \right\rceil.
		\end{equation*}
	\end{lemma}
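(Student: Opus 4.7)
I would prove the first inequality by induction on $l$, which then immediately yields the bound on $T_3$ via a geometric counting argument. The base case $l=1$ is trivial: by construction $\psi_1(\z) = f(\bar{\x}_1) + \frac{1}{6}\varsigma_1\|\z-\bar{\x}_1\|^3$ is minimized at $\z_1 = \bar{\x}_1$, giving $\psi_1(\z_1) = f(\bar{\x}_1) = \frac{1\cdot 2\cdot 3}{6}f(\bar{\x}_1)$. For the inductive step, assuming $\psi_{l-1}(\z_{l-1}) \ge \frac{(l-1)l(l+1)}{6}f(\bar{\x}_{l-1})$, I would combine three ingredients at $\z = \z_l$: (i) the strong-convexity-type bound from Lemma~\ref{Lemma:AARC-T3-P1} applied to $\psi_{l-1}$, namely $\psi_{l-1}(\z_l) \ge \psi_{l-1}(\z_{l-1}) + \frac{1}{12}\varsigma_{l-1}\|\z_l-\z_{l-1}\|^3$; (ii) the defining update $\psi_l(\z_l) = \psi_{l-1}(\z_l) + \frac{l(l+1)}{2}\bigl[f(\bar{\x}_l) + (\z_l-\bar{\x}_l)^\top\nabla f(\bar{\x}_l)\bigr] + \frac{1}{6}(\varsigma_l-\varsigma_{l-1})\|\z_l-\bar{\x}_1\|^3$; and (iii) the convexity inequality $f(\bar{\x}_{l-1}) \ge f(\bar{\x}_l) + (\bar{\x}_{l-1}-\bar{\x}_l)^\top \nabla f(\bar{\x}_l)$. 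Since $\varsigma_l \ge \varsigma_{l-1}$, I drop the nonnegative term $\frac{1}{6}(\varsigma_l-\varsigma_{l-1})\|\z_l-\bar{\x}_1\|^3$ and observe that $\frac{(l-1)l(l+1)}{6} + \frac{l(l+1)}{2} = \frac{l(l+1)(l+2)}{6}$, producing the desired coefficient on $f(\bar{\x}_l)$.

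The remainder is a linear term in $\nabla f(\bar{\x}_l)$ plus the cubic term $\frac{\varsigma_{l-1}}{12}\|\z_l-\z_{l-1}\|^3$, and the goal is to show their sum is nonnegative when $\varsigma_{l-1}$ is large. Using the accelerating update $\y_{l-1} = \frac{l-1}{l+2}\bar{\x}_{l-1} + \frac{3}{l+2}\z_{l-1}$ (so that $(l-1)\bar{\x}_{l-1} + 3\z_{l-1} = (l+2)\y_{l-1}$) together with $\bar{\x}_l = \y_{l-1} + \s_j$, I rewrite the linear combination $(l-1)(\bar{\x}_{l-1}-\bar{\x}_l) + 3(\z_l-\bar{\x}_l)$ as $-(l+2)\s_j + 3(\z_l-\z_{l-1})$. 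The $\s_j$ contribution yields $-\frac{l(l+1)(l+2)}{6}\s_j^\top \nabla f(\bar{\x}_l)$, which the successful-iteration test $\rho_{T_1+j} \ge \eta$ lower-bounds by $\frac{l(l+1)(l+2)}{6}\eta\|\s_j\|^3$. For the remaining piece, $\frac{l(l+1)}{2}(\z_l-\z_{l-1})^\top \nabla f(\bar{\x}_l) + \frac{\varsigma_{l-1}}{12}\|\z_l-\z_{l-1}\|^3$, I invoke Lemma~\ref{Lemma:General-Second-Order} with $\sigma = \varsigma_{l-1}/4$ and $\g = \frac{l(l+1)}{2}\nabla f(\bar{\x}_l)$ to get the lower bound $-\frac{4}{3\sqrt{\varsigma_{l-1}}}\bigl(\frac{l(l+1)}{2}\bigr)^{3/2}\|\nabla f(\bar{\x}_l)\|^{3/2}$.

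\textbf{Main obstacle.} The crux is showing that the positive $\|\s_j\|^3$ term dominates this negative $\|\nabla f(\bar{\x}_l)\|^{3/2}$ term once $\varsigma_{l-1}$ is sufficiently large. Here I would apply Lemma~\ref{Lemma:AARC-T3-P2}, which gives $\|\nabla f(\bar{\x}_l)\|^{3/2} \le \bigl(\frac{L_h/2 + \bar{\sigma}_2 + \kappa_\theta L_g}{1-\kappa_\theta}\bigr)^{3/2}\|\s_j\|^3$, converting both terms to common units of $\|\s_j\|^3$. Requiring the positive term to dominate leads, after using $\sqrt{l(l+1)}/(l+2) \le 1$ and tracking the factor of $2$ between $L_h/2+\bar{\sigma}_2+\kappa_\theta L_g$ and $L_h + 2\bar{\sigma}_2 + 2\kappa_\theta L_g$, to the threshold $\varsigma_{l-1} \ge \bigl(\frac{L_h + 2\bar{\sigma}_2 + 2\kappa_\theta L_g}{1-\kappa_\theta}\bigr)^3\frac{1}{\eta^2}$; since $\varsigma_l \ge \varsigma_{l-1}$, the stated hypothesis on $\varsigma_l$ is enough.

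\textbf{Counting $T_3$.} Each update inside the while loop multiplies $\varsigma$ by $\gamma_3$, and once $\varsigma_l$ crosses the above threshold the loop exits (by the just-proved inequality). Starting from $\varsigma_1$ and accumulating across all successful outer iterations, the total number of multiplications by $\gamma_3$ is at most the smallest integer exceeding $\log_{\gamma_3}$ of the ratio between the threshold and $\varsigma_1$, giving exactly the stated bound $T_3 \le \left\lceil \frac{1}{\log(\gamma_3)}\log\bigl[\bigl(\frac{L_h+2\bar{\sigma}_2+2\kappa_\theta L_g}{1-\kappa_\theta}\bigr)^3\frac{1}{\eta^2\varsigma_1}\bigr]\right\rceil$.
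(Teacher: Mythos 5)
Your proof follows the paper's own route step for step: the same base case, the same use of Lemma~\ref{Lemma:AARC-T3-P1}, the same dropping of the nonnegative term $\frac{1}{6}(\varsigma_l-\varsigma_{l-1})\|\z-\bar{\x}_1\|^3$, the same convexity inequality, the same algebraic rewriting via the definition of $\y_{l-1}$, and the same invocation of the success test, Lemma~\ref{Lemma:AARC-T3-P2}, and Lemma~\ref{Lemma:General-Second-Order}. The numerical bookkeeping (the factor $\sqrt{l(l+1)}/(l+2)\le 1$ and the factor of $2$ producing $L_h+2\bar{\sigma}_2+2\kappa_\theta L_g$) is also right.

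There is, however, a genuine gap at the very end of the induction. Since you apply Lemma~\ref{Lemma:AARC-T3-P1} to $\psi_{l-1}$, the cubic you obtain is $\frac{1}{12}\varsigma_{l-1}\|\z-\z_{l-1}\|^3$, and you correctly invoke Lemma~\ref{Lemma:General-Second-Order} with $\sigma=\varsigma_{l-1}/4$. The sufficient condition this produces is therefore $\varsigma_{l-1}\ge X$ with $X=\bigl(\tfrac{L_h+2\bar{\sigma}_2+2\kappa_\theta L_g}{1-\kappa_\theta}\bigr)^3\tfrac{1}{\eta^2}$. You then write ``since $\varsigma_l\ge\varsigma_{l-1}$, the stated hypothesis on $\varsigma_l$ is enough,'' but the implication runs in the wrong direction: $\varsigma_l\ge X$ does \emph{not} force $\varsigma_{l-1}\ge X$. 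So what you have actually shown is \eqref{Inequality:induction} under the hypothesis $\varsigma_{l-1}\ge X$, not under the hypothesis $\varsigma_l\ge X$ as the lemma asserts, and your $T_3$-count then quietly uses the unproven version: to conclude that the \textsf{while} loop exits ``once $\varsigma_l$ crosses the threshold,'' the criterion must depend on the \emph{current} value $\varsigma_l$ being driven upward by the $\gamma_3$-multiplications, whereas your criterion depends on the quantity $\varsigma_{l-1}$ that is held fixed throughout the inner loop at step $l$.

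The paper sidesteps this by passing from $\frac{1}{12}\varsigma_{l-1}\|\z-\z_{l-1}\|^3$ directly to $\frac{1}{12}\varsigma_{l}\|\z-\z_{l-1}\|^3$ at the moment the term $\frac{1}{6}(\varsigma_l-\varsigma_{l-1})\|\z-\bar{\x}_1\|^3$ is absorbed, and then applies Lemma~\ref{Lemma:General-Second-Order} with $\sigma=\varsigma_l/4$; this lands the threshold on $\varsigma_l$ and makes the $T_3$ count go through as stated. (That upgrade is itself delicate, since the two cubics are centered at $\z_{l-1}$ and $\bar{\x}_1$ respectively; but it is the step the paper takes, and it is precisely the step your argument is missing.) If you want to stay with the $\varsigma_{l-1}$ coefficient, you must explain separately why the inner loop at a \emph{single} step $l$ cannot run for an unbounded number of iterations when $\varsigma_{l-1}<X$; your current write-up does not supply that.
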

	\begin{proof}
		When $l=1$, it trivially holds true that $\psi_l(\z_l)\ge\frac{l(l+1)(l+2)}{6} f(\bar{\x}_l)$ since $\psi_1(\z_1)=f(\bar{\x}_1)$. As a result, it suffices to show that $\varsigma_l\ge\left(\frac{ L_h+2\bar{\sigma}_2+2\kappa_\theta L_g}{1-\kappa_\theta}\right)^{3} \frac{1}{\eta^2}$ by mathematical induction. Without loss of generality, we assume \eqref{Inequality:induction} holds true for some $l - 1 \ge 1$. 
		Then, it follows from Lemma \ref{Lemma:AARC-T3-P1}, and the construction of $\psi_l(\z)$ that
		\begin{equation*}
		\psi_{l-1}(\z)\geq\psi_{l-1}(\z_{l-1})+\frac{1}{12}\varsigma_{l-1}\left\|\z-\z_{l-1}\right\|^3 \geq \frac{(l-1)l(l+1)}{6} f(\bar{\x}_{l-1}) + \frac{1}{12}\varsigma_{l-1}\left\|\z-\z_{l-1}\right\|^3.
		\end{equation*}
		As a result, we have
		\begin{eqnarray*}
			& & \psi_l(\z_l) \\
			& = & \min_{\z\in\br^d} \ \left\{\psi_{l-1}(\z) + \frac{l(l+1)}{2}\left[ f(\bar{\x}_l)+\left(\z-\bar{\x}_l\right)^\top\nabla f(\bar{\x}_l)\right] + \frac{1}{6}\left(\varsigma_l - \varsigma_{l-1}\right)\|\z-\bar{\x}_1\|^3  \right\} \\
			& \geq & \min_{\z\in\br^d} \ \left\{\frac{(l-1)l(l+1)}{6} f(\bar{\x}_{l-1}) + \frac{1}{12}\varsigma_l\left\|\z-\z_{l-1}\right\|^3 + \frac{l(l+1)}{2}\left[ f(\bar{\x}_l)+\left(\z-\bar{\x}_l\right)^\top\nabla f(\bar{\x}_l)\right]  \right\} \\
			& \geq & \min_{\z\in\br^d} \ \left\{\frac{(l-1)l(l+1)}{6} \left[ f(\bar{\x}_l)+\left(\bar{\x}_{l-1}-\bar{\x}_l\right)^\top\nabla f(\bar{\x}_l)\right] + \frac{1}{12}\varsigma_l\left\|\z- \z_{l-1}\right\|^3\right. \\
			& & \quad \quad \quad \left. + \frac{l(l+1)}{2}\left[ f(\bar{\x}_l)+\left(\z -\bar{\x}_l\right)^\top\nabla f(\bar{\x}_l)\right]  \right\} \\
			& = & \frac{l(l+1)(l+2)}{6} f(\bar{\x}_l) + \min_{\z\in\br^d} \ \left\{ \frac{(l-1)l(l+1)}{6} \left(\bar{\x}_{l-1}-\bar{\x}_l\right)^\top\nabla f(\bar{\x}_l) + \frac{1}{12}\varsigma_l\left\|\z-\z_{l-1}\right\|^3\right. \\
			& & \quad \quad \quad \left. + \frac{l(l+1)}{2}\left(\z-\bar{\x}_l\right)^\top\nabla f(\bar{\x}_l)\right\},
		\end{eqnarray*}
		where the first equality holds since $ \varsigma_l\ge \varsigma_{l-1}$. By the construction of $\y_{l-1}$, we have
		\begin{eqnarray*}
			\frac{(l-1)l(l+1)}{6}\bar{\x}_{l-1} & = & \frac{l(l+1)(l+2)}{6}\cdot\frac{l-1}{l+2}\bar{\x}_{l-1} \\
			& = & \frac{l(l+1)(l+2)}{6}\left(\y_{l-1} - \frac{3}{l+2}\z_{l-1}\right) \\
			& = & \frac{l(l+1)(l+2)}{6}\y_{l-1} - \frac{l(l+1)}{2}\z_{l-1}.
		\end{eqnarray*}
		Combining the above two formulas yields
		\begin{eqnarray*}
			& & \psi_l(\z_l) \\
			& \geq & \frac{l(l+1)(l+2)}{6} f(\bar{\x}_l) + \min_{\z\in\br^d} \ \left\{\frac{l(l+1)(l+2)}{6}\left(\y_{l-1}-\bar{\x}_l\right)^\top\nabla f(\bar{\x}_l) + \frac{1}{12}\varsigma_l\left\|\z-\z_{l-1}\right\|^3\right. \\
			& & \left. + \frac{l(l+1)}{2}\left(\z-\z_{l-1}\right)^\top \nabla f(\bar{\x}_l) \right\}.
		\end{eqnarray*}
		Then, by the criterion of successful iteration in \textsf{AAS} and Lemma \ref{Lemma:AARC-T3-P2}, we have
		\begin{eqnarray*}
			\left(\y_{l-1}-\bar{\x}_l\right)^\top\nabla f(\bar{\x}_l)  & = & -\s_{T_1+j}^\top\nabla f(\y_{l-1}+\s_{T_1+j}) \geq \eta\left\| \s_{T_1+j} \right\|^3 \\
			& \geq & \eta \left(\frac{1-\kappa_\theta}{\frac{L_h}{2}+\bar{\sigma}_2+\kappa_\theta L_g}\right)^{\frac{3}{2}}\left\| \nabla f(\bar{\x}_l)\right\|^{\frac{3}{2}},
		\end{eqnarray*}
		where the $l$-th successful iteration count refers to the $(j-1)$-th iteration count in \textsf{AAS}. Hence, it suffices to establish
		\begin{equation*}
		\frac{l(l+1)(l+2)\eta}{6}\left(\frac{1-\kappa_\theta}{\frac{ L_h}{2}+\bar{\sigma}_2+\kappa_\theta L_g}\right)^{\frac{3}{2}}\left\| \nabla f(\bar{\x}_l)\right\|^{\frac{3}{2}} + \frac{1}{12}\varsigma_l\left\|\z-\z_{l-1}\right\|^3 + \frac{l(l+1)}{2}\left(\z-\z_{l-1}\right)^\top\nabla f(\bar{\x}_l) \geq 0.
		\end{equation*}
		Using Lemma \ref{Lemma:General-Second-Order} and setting $\g=\frac{l(l+1)}{2}\nabla f(\bar{\x}_l)$, $\s=\z-\z_l$, and $\sigma=\frac{1}{4}\varsigma_l$, the above is implied by
		\begin{equation}\label{Cubic-Proof-last-Inequality}
		\frac{l(l+1)(l+2)\eta}{6}\left(\frac{1-\kappa_\theta}{\frac{ L_h}{2}+\bar{\sigma}_2+\kappa_\theta L_g}\right)^{\frac{3}{2}}\geq\frac{4}{3\sqrt{\varsigma_l}}
		\left(\frac{l(l+1)}{2}\right)^{\frac{3}{2}}.
		\end{equation}
		Therefore, the conclusion follows if
		\begin{equation*}
		\varsigma_l \ge \left(\frac{ L_h+2\bar{\sigma}_2+2\kappa_\theta L_g}{1-\kappa_\theta}\right)^{3} \frac{1}{\eta^2}.
		\end{equation*}
	\end{proof}
	
	\subsubsection{Iteration Complexity}
	Recall that $l= 1,2,\ldots$ is the count of successful iterations, and the sequence $\{ \bar{\x}_l, \ l=1,2,\ldots \}$ is updated when a successful iteration is identified. The iteration complexity result is presented in Theorem \ref{Theorem:AARC-Main} and Theorem \ref{Thm:AARC-Main}.
	\begin{theorem}\label{Theorem:AARC-Main}
		The sequence $\{\bar{\x}_l, \ l=1,2,\ldots\}$ generated by Algorithm \ref{Algorithm: AARC} satisfies
		\begin{eqnarray*}
			& & \frac{l(l+1)(l+2)}{6}f(\bar{\x}_l) \leq \psi_l(\z_l) \leq \psi_l(\z) \\
			&\leq& \frac{l(l+1)(l+2)}{6}f(\z) + \frac{L_h+\bar{\sigma}_1}{3}\left\|\z-\x_0\right\|^3 + \frac{2\kappa_\theta (1+\kappa_\theta) L_g^2}{\sigma_{\min}}\|\x_0 - \x^*\|^2+ \frac{1}{6}\varsigma_l\left\|\z-\bar{\x}_1\right\|^3 ,
		\end{eqnarray*}
		where
		\begin{equation*}
		\bar{\sigma}_1= \max\left\{\sigma_0, \frac{ \gamma_2 L_h}{2}\right\} > 0.
		\end{equation*}
	\end{theorem}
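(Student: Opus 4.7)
The leftmost inequality $\frac{l(l+1)(l+2)}{6}f(\bar{\x}_l) \le \psi_l(\z_l)$ is built into the algorithm: the inner while loop of Phase II keeps inflating $\varsigma$ by a factor $\gamma_3$ until exactly this inequality holds, and Lemma \ref{Lemma:AARC-T3} guarantees that the loop terminates. The middle inequality $\psi_l(\z_l)\le\psi_l(\z)$ is immediate from $\z_l=\argmin_{\z\in\br^d}\psi_l(\z)$. So the whole work is in the rightmost bound, which I would prove by induction on $l$.

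For the base case $l=1$, $\psi_1(\z)=f(\bar{\x}_1)+\frac{1}{6}\varsigma_1\|\z-\bar{\x}_1\|^3$, so it suffices to establish, for every $\z\in\br^d$,
\begin{equation*}
f(\bar{\x}_1) \;\le\; f(\z) + \frac{L_h+\bar{\sigma}_1}{3}\|\z-\x_0\|^3 + \frac{2\kappa_\theta(1+\kappa_\theta)L_g^2}{\sigma_{\min}}\|\x_0-\x^*\|^2.
\end{equation*}
I would proceed in four steps. (i)~Because the $(T_1{-}1)$-th iteration of SAS is successful, $f(\bar{\x}_1)\le m(\x_0,\s_{T_1-1},\sigma_{T_1-1})$. (ii)~Let $\s^*$ denote the exact minimizer of $m(\x_0,\cdot,\sigma_{T_1-1})$ (which exists since $m(\x_0,\cdot,\sigma)$ is convex in its second argument). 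Convexity of $m(\x_0,\cdot,\sigma)$ together with $\nabla_{\s}m(\x_0,\s^*,\sigma_{T_1-1})=0$ gives
\begin{equation*}
m(\x_0,\s_{T_1-1},\sigma_{T_1-1})-m(\x_0,\s^*,\sigma_{T_1-1}) \;\le\; \|\nabla_{\s} m(\x_0,\s_{T_1-1},\sigma_{T_1-1})\|\cdot\|\s_{T_1-1}-\s^*\|.
\end{equation*}
(iii)~By minimality of $\s^*$ and the Lipschitz Hessian estimate (as in \eqref{Equality-Second-Order-AARC}), together with the bound $\sigma_{T_1-1}\le\bar{\sigma}_1$ extracted inside the proof of Lemma \ref{Lemma:AARC-T1}, we have $m(\x_0,\s^*,\sigma_{T_1-1})\le m(\x_0,\z-\x_0,\sigma_{T_1-1})\le f(\z)+\frac{L_h+\bar{\sigma}_1}{3}\|\z-\x_0\|^3$. (iv)~To bound the $\z$-independent error, taking inner products of the (exact/approximate) first-order conditions with $\s^*$ and with $\s_{T_1-1}$ and dropping the nonnegative Hessian terms yields $\sigma_{\min}\|\s^*\|^2\le\|\nabla f(\x_0)\|$ and $\sigma_{\min}\|\s_{T_1-1}\|^2\le(1+\kappa_\theta)\|\nabla f(\x_0)\|$; combining these with $\|\nabla_{\s}m\|\le\kappa_\theta\|\nabla f(\x_0)\|$ from Condition \ref{Cond:Approx_Subprob} and $\|\nabla f(\x_0)\|\le L_g\|\x_0-\x^*\|$ should yield the stated error bound after splitting on whether $\|\s_{T_1-1}\|\le 1$.

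For the inductive step, the update rule in Table \ref{tab:auxiliary} gives
\begin{equation*}
\psi_l(\z) = \psi_{l-1}(\z) + \frac{l(l+1)}{2}\left[f(\bar{\x}_{l-1})+(\z-\bar{\x}_{l-1})^\top\nabla f(\bar{\x}_{l-1})\right] + \frac{\varsigma_l-\varsigma_{l-1}}{6}\|\z-\bar{\x}_1\|^3.
\end{equation*}
Applying the inductive hypothesis to $\psi_{l-1}(\z)$ and the convexity inequality $f(\bar{\x}_{l-1})+(\z-\bar{\x}_{l-1})^\top\nabla f(\bar{\x}_{l-1})\le f(\z)$, the coefficient of $f(\z)$ combines as $\frac{(l-1)l(l+1)}{6}+\frac{l(l+1)}{2}=\frac{l(l+1)(l+2)}{6}$, the $\|\z-\bar{\x}_1\|^3$ coefficient telescopes from $\varsigma_{l-1}/6$ to $\varsigma_l/6$, and the $\z$-independent error term together with the $\|\z-\x_0\|^3$ term is carried through unchanged.

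The main obstacle is step (iv) of the base case: condensing the coupled gradient condition \eqref{Eqn:Approx_Subprob} into a clean, $\z$-independent error of exactly the form $\frac{2\kappa_\theta(1+\kappa_\theta)L_g^2}{\sigma_{\min}}\|\x_0-\x^*\|^2$ requires a careful case split on $\|\s_{T_1-1}\|$ versus $1$ in order to exploit both the $\min(1,\|\s\|)$ and $\min(\|\s\|,\|\nabla f\|)$ factors, and then trading $\|\s\|^2$ for $\|\nabla f(\x_0)\|/\sigma_{\min}$ via the approximate first-order optimality. By contrast, the induction step is essentially mechanical, relying only on convexity of $f$ and the telescoping structure of the estimating sequence $\psi_l$.
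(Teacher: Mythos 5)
Your proposal matches the paper's proof structure exactly: the middle inequality from $\z_l=\argmin\psi_l$, the left inequality from the inner while loop (with termination guaranteed by Lemma~\ref{Lemma:AARC-T3}), the inductive step via convexity of $f$ and the telescoping of the $\psi_l$ update, and the base case (Theorem~\ref{Theorem:AARC-Main-Prep}) via the decomposition $m(\x_0,\s_{T_1-1},\sigma_{T_1-1})=[m(\x_0,\s_{T_1-1},\sigma_{T_1-1})-m(\x_0,\s^m,\sigma_{T_1-1})]+m(\x_0,\s^m,\sigma_{T_1-1})$. One small simplification you missed in step~(iv): no case split on $\|\s_{T_1-1}\|$ versus $1$ is required, because Condition~\ref{Cond:Approx_Subprob} gives uniformly $\|\nabla_\s m\|\le\kappa_\theta\min(1,\|\s\|)\min(\|\s\|,\|\nabla f\|)\le\kappa_\theta\|\s_{T_1-1}\|\,\|\nabla f(\x_0)\|$ (both minimums dropped in the obvious way); the weaker bound $\|\nabla_\s m\|\le\kappa_\theta\|\nabla f(\x_0)\|$ you wrote down would only yield an error of order $\|\nabla f\|^{3/2}/\sqrt{\sigma_{\min}}$ rather than the stated $\|\nabla f\|^2/\sigma_{\min}$, so the extra $\|\s_{T_1-1}\|$ factor is essential, and it comes out directly without any splitting.
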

	\begin{proof}
		The proof is based on mathematical induction. We postpone the base case of $l=1$ to Theorem \ref{Theorem:AARC-Main-Prep}. Suppose that the theorem is true for some $l\geq 1$. Let us consider the case of $l+1$:
		\begin{eqnarray*}
			\psi_{l+1}(\z_{l+1})  & \le & \psi_{l+1}(\z) \\
			& \leq & \frac{l(l+1)(l+2)}{6}f(\z) + \frac{L_h+\bar{\sigma}_1 }{3}\left\|\z-\x_0\right\|^3+ \frac{1}{6}\varsigma_l\left\|\z-\bar{\x}_1\right\|^3 + \frac{2\kappa_\theta(1+\kappa_\theta)  L_g^2}{\sigma_{\min}}\|\x_0-\x^*\|^2 \\
			& & + \frac{(l+1)(l+2)}{2}\left[ f(\bar{\x}_l)+\left(\z-\bar{\x}_l\right)^\top\nabla f(\bar{\x}_l)\right]+ \frac{1}{6}\left(\varsigma_{l+1}-\varsigma_l\right) \left\|\z-\bar{\x}_1\right\|^3 \\
			& \le & \frac{(l+1)(l+2)(l+3)}{6}f(\z) + \frac{L_h + \bar{\sigma}_1 }{3}\left\|\z-\x_0\right\|^3 + \frac{2\kappa_\theta (1+\kappa_\theta) L_g^2}{\sigma_{\min}}\|\x_0 - \x^*\|^2 + \frac{1}{6}\varsigma_{l+1}\left\|\z-\bar{\x}_1\right\|^3 ,
		\end{eqnarray*}
		where the last inequality is due to convexity of $f(\z)$.
		On the other hand, it follows from the way that $\psi_{l+1}(\z)$ is updated that $\frac{(l+1)(l+2)(l+3)}{6}f(\bar{\x}_{l+1}) \leq \psi_{l+1}(\z_{l+1})$, and thus Theorem~\ref{Theorem:AARC-Main} is proven.
	\end{proof}
	After establishing Theorem~\ref{Theorem:AARC-Main}, the iteration complexity of Algorithm \ref{Algorithm: AARC} readily follows.
	\begin{theorem}\label{Thm:AARC-Main}
		The sequence $\{\bar{\x}_l, \ l=1,2,\ldots\}$ generated by Algorithm \ref{Algorithm: AARC} satisfies that
		\begin{equation*}
		f(\bar{\x}_l)-f(\x^*) \leq \frac{C_1}{l(l+1)(l+2)} \leq \frac{C_1}{l^3},
		\end{equation*}
		where
		\begin{equation*}
		C_1 = \left(2 L_h + 2\bar{\sigma}_1 \right)\|\x_0-\x^* \|^3 + \left(\frac{ L_h+2\bar{\sigma}_2+2\kappa_\theta L_g}{1-\kappa_\theta}\right)^{3} \frac{1}{\eta^2}\|\bar{\x}_1-\x^* \|^3 + \frac{12\kappa_\theta(1+\kappa_\theta) L_g^2}{\sigma_{\min}}\|\x_0-\x^*\|^2.
		\end{equation*}
		The total number of iterations required to find $\bar{\x}_k$ such that $f(\bar{\x}_k) -f(\x^*) \le \epsilon$ is
		{\small\begin{equation*}
			k \leq 1 + \frac{2}{\log(\gamma_1)}\log \left(\frac{\bar{\sigma}_1}{\sigma_{\min}}\right) + \left(1 + \frac{2}{\log(\gamma_1)}\log\left(\frac{\bar{\sigma}_2}{\sigma_{\min}}\right)\right)\left[\left(\frac{C_1}{\epsilon}\right)^{\frac{1}{3}}+1\right] + \left\lceil \frac{1}{\log(\gamma_3)}\log\left[\left(\frac{ L_h+2\bar{\sigma}_2+2\kappa_\theta L_g}{1-\kappa_\theta}\right)^{3} \frac{1}{\eta^2\varsigma_1} \right] \right\rceil,
			\end{equation*}}
		\noindent where
		\begin{equation*}
		\bar{\sigma}_2 = \max\left\{\bar{\sigma}_1,\frac{\gamma_2 L_h}{2}+\gamma_2\kappa_\theta+\gamma_2\eta\right\}>0.
		\end{equation*}
	\end{theorem}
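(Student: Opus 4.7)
The proof is essentially an assembly of the estimates already established in Lemmas \ref{Lemma:AARC-T1}, \ref{Lemma:AARC-T2}, \ref{Lemma:AARC-T3}, and Theorem \ref{Theorem:AARC-Main}, so the plan is first to derive the per-iteration error bound and then to translate it into an iteration-count bound.

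First, I would specialize the inequality chain from Theorem \ref{Theorem:AARC-Main} by taking $\z = \x^*$. This gives
\[
\frac{l(l+1)(l+2)}{6}\bigl(f(\bar{\x}_l) - f(\x^*)\bigr) \leq \frac{L_h + \bar{\sigma}_1}{3}\|\x^* - \x_0\|^3 + \frac{2\kappa_\theta(1+\kappa_\theta) L_g^2}{\sigma_{\min}}\|\x_0 - \x^*\|^2 + \frac{1}{6}\varsigma_l\|\x^* - \bar{\x}_1\|^3.
\]
Multiplying through by $6$ and bounding $\varsigma_l$ by the explicit threshold $\bigl(\tfrac{L_h + 2\bar{\sigma}_2 + 2\kappa_\theta L_g}{1-\kappa_\theta}\bigr)^3\tfrac{1}{\eta^2}$ from Lemma \ref{Lemma:AARC-T3} (using that the while-loop in Algorithm \ref{Algorithm: AARC} terminates as soon as $\varsigma_l$ reaches that threshold, up to the factor $\gamma_3$ which I would absorb in the constants or, more cleanly, handle via the $T_3$ estimate separately) yields the stated bound $f(\bar{\x}_l) - f(\x^*) \leq C_1/(l(l+1)(l+2)) \leq C_1/l^3$.

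Next, to bound the total iteration count $k$ needed to reach an $\epsilon$-optimal solution, I would solve $C_1/l^3 \leq \epsilon$ for $l$, giving that it suffices to have $|\SCal| \geq (C_1/\epsilon)^{1/3}$ successful iterations in Phase II, hence at most $(C_1/\epsilon)^{1/3}+1$ suffice. Then I would sum three contributions: (i) the Phase I count, bounded by Lemma \ref{Lemma:AARC-T1} as $1 + \tfrac{2}{\log\gamma_1}\log(\bar{\sigma}_1/\sigma_{\min})$; (ii) the Phase II count, bounded by Lemma \ref{Lemma:AARC-T2} as $\bigl(1 + \tfrac{2}{\log\gamma_1}\log(\bar{\sigma}_2/\sigma_{\min})\bigr)\cdot|\SCal|$ with $|\SCal| \leq (C_1/\epsilon)^{1/3} + 1$; and (iii) the number of $\varsigma$-updates $T_3$, bounded by Lemma \ref{Lemma:AARC-T3}. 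Adding these three gives exactly the displayed formula for $k$.

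I do not anticipate any substantive obstacle: all three ingredient lemmas are already proven, and the main work was Theorem \ref{Theorem:AARC-Main} (whose inductive step is done in the excerpt, modulo the base case $l=1$ which Theorem \ref{Theorem:AARC-Main-Prep} supplies). The only subtle point worth treating carefully is the justification of bounding $\varsigma_l$ by the threshold from Lemma \ref{Lemma:AARC-T3}: because the algorithm multiplies $\varsigma$ by $\gamma_3$ on each failed test, the actual terminating $\varsigma_l$ could exceed the threshold by a factor of at most $\gamma_3$, but for the purpose of the bound on $C_1$ it is enough to note that this factor is already accounted for either in the constant $C_1$ or, alternatively, the total number of such updates is finite and summarized by $T_3$ in the iteration-count expression. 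With these pieces combined, the claim follows.
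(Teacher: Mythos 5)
Your proposal is correct and follows essentially the same route as the paper: take $\z=\x^*$ in Theorem \ref{Theorem:AARC-Main}, rearrange, bound $\varsigma_l$ via the threshold from Lemma \ref{Lemma:AARC-T3}, and then sum the Phase I count (Lemma \ref{Lemma:AARC-T1}), the Phase II count (Lemma \ref{Lemma:AARC-T2} with $|\SCal|\leq (C_1/\epsilon)^{1/3}+1$), and the $T_3$ count. Your remark about the possible extra factor of $\gamma_3$ in the terminating $\varsigma_l$ is a fair observation that the paper itself glosses over, but it does not alter the argument.
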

	\begin{proof}
		By Theorem~\ref{Theorem:AARC-Main} and taking $\z=\x^*$ we have
		\begin{equation*}
		\frac{l(l+1)(l+2)}{6}f(\bar{\x}_l) \leq \frac{l(l+1)(l+2)}{6}f(\x^*) + \frac{L_h+\bar{\sigma}_1}{3}\left\|\x^*-\x_0\right\|^3 + \frac{2\kappa_\theta (1+\kappa_\theta) L_g^2}{\sigma_{\min}}\|\x_0-\x^*\|^2 + \frac{\varsigma_l}{6}\left\| \x^*-\bar{\x}_1\right\|^3.
		\end{equation*}
		Rearranging the terms, and combining with Lemmas \ref{Lemma:AARC-T1}, \ref{Lemma:AARC-T2} and \ref{Lemma:AARC-T3} lead to the conclusions.
	\end{proof}
	
	Finally let us go back to prove the base case ($l =1$) of Theorem \ref{Theorem:AARC-Main}.
	\begin{theorem}\label{Theorem:AARC-Main-Prep}
		It holds that
		\begin{equation*}
		f(\bar{\x}_1) \leq \psi_1(\z_1) \leq \psi_1(\z) \leq f(\z) + \frac{L_h + \bar{\sigma}_1 }{3}\left\| \z-\x_0\right\|^3 + \frac{2\kappa_\theta (1+\kappa_\theta) L_g^2}{\sigma_{\min}}\|\x_0 - \x^*\|^2 + \frac{1}{6}\varsigma_1\left\|\z-\bar{\x}_1\right\|^3 .
		\end{equation*}
	\end{theorem}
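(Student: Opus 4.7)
The first two inequalities are almost immediate: since $\psi_1(\z)=f(\bar{\x}_1)+\tfrac{1}{6}\varsigma_1\|\z-\bar{\x}_1\|^3$ is uniquely minimized at $\z_1=\bar{\x}_1$ with value $f(\bar{\x}_1)$, we obtain $f(\bar{\x}_1)=\psi_1(\z_1)\leq\psi_1(\z)$ for every $\z$. Cancelling the common $\tfrac{1}{6}\varsigma_1\|\z-\bar{\x}_1\|^3$ term on both sides of the third inequality reduces it to the $\z$-wise claim
\[
f(\bar{\x}_1)\;\leq\;f(\z)+\frac{L_h+\bar{\sigma}_1}{3}\|\z-\x_0\|^3+\frac{2\kappa_\theta(1+\kappa_\theta)L_g^2}{\sigma_{\min}}\|\x_0-\x^*\|^2.
\]

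To obtain this I would start from $\bar{\x}_1=\x_0+\s_{T_1-1}$, which holds because unsuccessful iterations in \textsf{SAS} leave $\x$ unchanged, and then invoke the successful-iteration test $\rho_{T_1-1}<0$ together with the bound $\sigma_{T_1-1}\leq\bar{\sigma}_1$ coming from the proof of Lemma~\ref{Lemma:AARC-T1} to get $f(\bar{\x}_1)\leq m(\x_0,\s_{T_1-1},\sigma_{T_1-1})$. Since $f$ is convex, $\nabla^2 f(\x_0)\succeq 0$, so $m(\x_0,\cdot,\sigma_{T_1-1})$ is a convex function of $\s$; letting $\tilde{\s}$ denote its exact minimizer, convexity of $m$ combined with the optimality inequality $m(\x_0,\tilde{\s},\sigma_{T_1-1})\leq m(\x_0,\z-\x_0,\sigma_{T_1-1})$ yields
\[
m(\x_0,\s_{T_1-1},\sigma_{T_1-1})\;\leq\;m(\x_0,\z-\x_0,\sigma_{T_1-1})+\nabla_{\s} m(\x_0,\s_{T_1-1},\sigma_{T_1-1})^\top(\s_{T_1-1}-\tilde{\s}).
\]
The Hessian-Lipschitz Taylor estimate identical in form to \eqref{Equality-Second-Order-AARC} then gives $m(\x_0,\z-\x_0,\sigma_{T_1-1})\leq f(\z)+\bigl(\tfrac{L_h}{6}+\tfrac{\sigma_{T_1-1}}{3}\bigr)\|\z-\x_0\|^3\leq f(\z)+\tfrac{L_h+\bar{\sigma}_1}{3}\|\z-\x_0\|^3$, matching the cubic term in the target bound exactly.

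The main obstacle is controlling the approximation error $\nabla_{\s} m^\top(\s_{T_1-1}-\tilde{\s})$ by a quantity that does not depend on $\z$. I would estimate it by $\|\nabla_{\s} m\|(\|\s_{T_1-1}\|+\|\tilde{\s}\|)$ and extract from Condition~\ref{Cond:Approx_Subprob} the two useful specializations $\|\nabla_{\s} m\|\leq\kappa_\theta\|\s_{T_1-1}\|\|\nabla f(\x_0)\|$ (from $\min(1,\|\s\|)\leq\|\s\|$ and $\min(\|\s\|,\|\nabla f(\x_0)\|)\leq\|\nabla f(\x_0)\|$) and $\|\nabla_{\s} m\|\leq\kappa_\theta\|\nabla f(\x_0)\|$. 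To bound the step lengths, I would substitute the identity $\nabla_{\s} m(\x_0,\s,\sigma_{T_1-1})=\nabla f(\x_0)+\nabla^2 f(\x_0)\s+\sigma_{T_1-1}\|\s\|\s$, take inner product with $\s$, and discard the nonnegative term $\s^\top\nabla^2 f(\x_0)\s$; this yields $\sigma_{T_1-1}\|\s_{T_1-1}\|^2\leq(1+\kappa_\theta)\|\nabla f(\x_0)\|$ and $\sigma_{T_1-1}\|\tilde{\s}\|^2\leq\|\nabla f(\x_0)\|$. Combining these with $\sigma_{T_1-1}\geq\sigma_{\min}$, the elementary inequality $\sqrt{1+\kappa_\theta}\leq 1+\kappa_\theta$, and $\|\nabla f(\x_0)\|=\|\nabla f(\x_0)-\nabla f(\x^*)\|\leq L_g\|\x_0-\x^*\|$ delivers the error bound $\tfrac{2\kappa_\theta(1+\kappa_\theta)L_g^2}{\sigma_{\min}}\|\x_0-\x^*\|^2$ and closes the chain of inequalities.
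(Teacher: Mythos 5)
Your argument is correct and follows essentially the same path as the paper: minimize $\psi_1$ at $\bar{\x}_1$, invoke the successful-iteration test to pass from $f(\bar{\x}_1)$ to $m(\x_0,\s_{T_1-1},\sigma_{T_1-1})$, decompose into the model value at the exact minimizer plus a convexity gap, bound the model value by the Hessian-Lipschitz Taylor estimate, and control the gap via Condition~\ref{Cond:Approx_Subprob} and the cubic-term inner-product trick to estimate $\|\s_{T_1-1}\|$ and $\|\tilde{\s}\|$. The only cosmetic difference is that you derive the slightly sharper bound $\sigma_{T_1-1}\|\tilde{\s}\|^2\leq\|\nabla f(\x_0)\|$ (since $\nabla_\s m(\tilde{\s})=0$) and therefore need $\sqrt{1+\kappa_\theta}\leq 1+\kappa_\theta$ to recombine, whereas the paper uniformly uses the looser $(1+\kappa_\theta)$ bound for both steps and avoids that extra algebraic remark; the resulting constants are identical.
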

	\begin{proof}
		By the definition of $\psi_1(\z)$ and the fact that $\bar{\x}_1=\x_{T_1}$, we have
		\begin{equation*}
		f(\bar{\x}_1) = f(\x_{T_1}) = \psi_1(\z_1).
		\end{equation*}
		Furthermore, by the criterion of successful iteration in \textsf{SAS},
		\begin{eqnarray*}
			f(\bar{\x}_1) & = & f(\x_{T_1}) \\
			& \leq & m(\x_{T_1-1}, \s_{T_1-1}, \sigma_{T_1-1}) \\
			& = & \left[ m(\x_{T_1-1}, \s_{T_1-1}, \sigma_{T_1-1}) - m(\x_{T_1-1}, \s^m_{T_1-1}, \sigma_{T_1-1}) \right] + m(\x_{T_1-1}, \s^m_{T_1-1}, \sigma_{T_1-1}),
		\end{eqnarray*}
		where $\s^m_{T_1-1}$ denotes the global minimizer of $m(\x_{T_1-1}, \s, \sigma_{T_1-1})$ over $\br^d$. Since $f$ is convex, so is $m(\x_{T_1-1},\s, \sigma_{T_1-1})$. Therefore, we have
		\begin{eqnarray*}
			& & m(\x_{T_1-1}, \s_{T_1-1}, \sigma_{T_1-1}) - m(\x_{T_1-1}, \s^m_{T_1-1}, \sigma_{T_1-1}) \\
			& \leq & \nabla_{\s} m(\x_{T_1-1}, \s_{T_1-1}, \sigma_{T_1-1})^\top \left(\s_{T_1-1} - \s^m_{T_1-1}\right) \\
			& \leq & \left\| \nabla_{\s} m(\x_{T_1-1}, \s_{T_1-1}, \sigma_{T_1-1}) \right\| \left\|\s_{T_1-1} - \s^m_{T_1-1}  \right\| \\
			& { \eqref{Eqn:Approx_Subprob} \above 0pt  \leq }& \kappa_\theta \left\| \nabla f(\x_{T_1-1}) \right\| \left\| \s_{T_1-1} \right\| \left\| \s_{T_1-1} - \s^m_{T_1-1}  \right\|.
		\end{eqnarray*}
		To bound $\left\|\s_{T_1-1} - \s^m_{T_1-1}\right\|$, we observe that
		\begin{eqnarray*}
			\sigma_{\min}\left\|\s\right\|^3  \leq  \sigma_{T_1-1}\left\|\s\right\|^3 &=& \s^\top \left[ \nabla m \left(\x_{T_1-1}, \s, \sigma_{T_1-1}\right) - \nabla f(\x_{T_1-1}) - \nabla^2 f(\x_{T_1-1}) \s \right] \\
			&\leq & \left\|\s\right\| \left[ \left\| \nabla f(\x_{T_1-1})\right\| +   \left\| \nabla m\left(\x_{T_1-1}, \s, \sigma_{T_1-1}\right)\right\|  \right] \\
			&{ \eqref{Eqn:Approx_Subprob} \above 0pt  \leq }& (1+\kappa_\theta)\left\|\s\right\| \left\| \nabla f(\x_{T_1-1})\right\|,
		\end{eqnarray*}
		where $\s=\s_{T_1-1}$ or $\s=\s^m_{T_1-1}$. Thus, we conclude that
		\begin{equation*}
		\left\|\s_{T_1-1}-\s^m_{T_1-1}  \right\| \leq \left\|\s_{T_1-1}\right\| + \left\|\s^m_{T_1-1} \right\| \leq 2\sqrt{\frac{(1+\kappa_\theta)\left\|\nabla f(\x_{T_1-1})\right\|}{\sigma_{\min}}},
		\end{equation*}
		which combines with Assumption \ref{Assumption-Objective-Gradient-Hessian} implies that
		\begin{eqnarray*}
			m(\x_{T_1-1}, \s_{T_1-1}, \sigma_{T_1-1}) - m(\x_{T_1-1}, \s^m_{T_1-1}, \sigma_{T_1-1}) & \leq & \frac{2\kappa_\theta(1+\kappa_\theta)}{\sigma_{\min}} \left\|\nabla f(\x_{T_1-1})\right\|^2 \\
			& = &  \frac{2\kappa_\theta(1+\kappa_\theta)}{\sigma_{\min}} \left\|\nabla f(\x_{T_1-1}) - \nabla f(\x^*)\right\|^2 \\
			& \leq & \frac{2\kappa_\theta (1+\kappa_\theta) L_g^2}{\sigma_{\min}}\left\| \x_{T_1-1} - \x_*\right\|^2 \\
			& = & \frac{2\kappa_\theta (1+\kappa_\theta) L_g^2}{\sigma_{\min}}\left\| \x_0 - \x_*\right\|^2.
		\end{eqnarray*}
		On the other hand, we have
		\begin{eqnarray*}
			& & m(\x_{T_1-1}, \s^m_{T_1-1}, \sigma_{T_1-1}) \\
			& = & f(\x_{T_1-1}) + (\s^m_{T_1-1})^\top \nabla f(\x_{T_1-1}) + \frac{1}{2} (\s^m_{T_1-1})^\top \nabla^2 f(\x_{T_1-1}) \s^m_{T_1-1} + \frac{1}{3}\sigma_{T_1-1}\left\| \s^m_{T_1-1}\right\|^3 \\
			& \leq & f(\x_{T_1-1}) + (\z-\x_{T_1-1})^\top \nabla f(\x_{T_1-1}) + \frac{1}{2} (\z-\x_{T_1-1})^\top \nabla^2 f(\x_{T_1-1})(\z-\x_{T_1-1}) + \frac{1}{3}\sigma_{T_1-1}\left\|\z-\x_{T_1-1}\right\|^3 \\
			& \leq & f(\z) + \frac{L_h}{6}  \left\|\z-\x_{T_1-1}\right\|^3  + \frac{1}{3}\sigma_{T_1-1} \left\|\z-\x_{T_1-1}\right\|^3  \\
			& \leq & f(\z) + \frac{L_h+\bar{\sigma}_1}{3}\left\|\z-\x_{T_1-1}\right\|^3 \\
			& = & f(\z) + \frac{L_h+\bar{\sigma}_1}{3}\left\|\z-\x_0\right\|^3,
		\end{eqnarray*}
		where the second inequality is due to \eqref{Equality-Second-Order-AARC} and Assumption \ref{Assumption-Objective-Gradient-Hessian}. Therefore, we conclude that
		\begin{eqnarray*}
			\psi_1(\z) & = & f(\bar{\x}_1)  + \frac{1}{6}\varsigma_1\left\| \z-\bar{\x}_1\right\|^3 \\
			&\leq & f(\z) + \frac{L_h+\bar{\sigma}_1}{3}\left\|\z-\x_0\right\|^3 + \frac{2\kappa_\theta (1+\kappa_\theta) L_g^2}{\sigma_{\min}}\|\x_0-\x^*\|^2+ \frac{1}{6}\varsigma_1\left\|\z-\bar{\x}_1\right\|^3 .
		\end{eqnarray*}
	\end{proof}

	\subsection{Strongly Convex Case}
	Next we extend the analysis to the case where the objective function is strongly convex (cf.\ Definition \ref{Assumption-Strongly-Convex}). We further assume the level set of $f(\x)$, $\{\x\in\br^d:\, f(\x)\leq f(\x_0)\}$,
	is bounded and is contained in $\|\x-\x_*\| \le D$. Then according to Lemma 3 in \cite{Nesterov-2008-Accelerating}, we have
	\begin{equation}\label{Strongly-Convex-1}
	\nabla^2 f(\x) \succeq \mu\BI,
	\end{equation}
	and
	\begin{equation}\label{Strongly-Convex-2}
	f(\y) - f(\x) - \left(\y-\x\right)^\top\nabla f(\x) \leq \frac{1}{2 \mu} \left\|\nabla f(\y) - \nabla f(\x) \right\|^2.
	\end{equation}
	We shall prove the improvement of the adaptive acceleration scheme in terms of the constant underlying the linear rate of convergence. To this end, denote $\ACal_m^1(\x)$ ($m\geq 1$) to be the point generated by running $m$ iterations of Algorithm \ref{Algorithm: AARC} with starting point $\x$. Then, generate sequence $\{ \hat{\x}_k,\; k=0, 1,2,\ldots \}$ through the following procedure
	\begin{center}
		\begin{enumerate}
			\item Define
			\begin{eqnarray*}
				m & = & 1+\frac{2}{\log(\gamma_1)}\log\left(\frac{\bar{\sigma}_1}{\sigma_{\min}}\right) + \left(1+\frac{2}{\log(\gamma_1)}\log\left(\frac{\bar{\sigma}_2}{\sigma_{\min}}\right) \right)\left[2\left(\frac{\tau_1 D + \tau_2 }{\mu}\right)^{\frac{1}{3}}+1\right] \\
				& & + \left\lceil \frac{1}{\log(\gamma_3)}\log\left[\left(\frac{ L_h+2\bar{\sigma}_2+2\kappa_\theta L_g}{1-\kappa_\theta}\right)^{3} \frac{1}{\eta^2\varsigma_1} \right] \right\rceil,
			\end{eqnarray*}
			with
			\begin{equation*}
			\tau_1 = 2 L_h + 2\bar{\sigma}_1 + \left(\frac{ L_h+2\bar{\sigma}_2+2\kappa_\theta L_g}{1-\kappa_\theta}\right)^{3} \frac{1}{\eta^2}\quad \mbox{and} \quad \tau_2 = \frac{12\kappa_\theta (1+\kappa_\theta) L_g^2}{\sigma_{\min}}.
			\end{equation*}
			\item Set $\hat{\x}_0\in\br^d$.
			\item For $k\geq 0$, iterate $\hat{\x}_k=\ACal_m^1(\hat{\x}_{k-1})$.
		\end{enumerate}
	\end{center}
	The linear convergence of $\{\hat{\x}_k,\; k=0, 1,2,\ldots \}$ is presented in the following theorem.
	\begin{theorem}\label{Theorem:AARC-Linear-Main}
		Suppose the sequence $\{\hat{\x}_k,\; k=0,1,2,\ldots \}$ is generated by the procedure above. For $k \ge O\left(\log\left(\frac{1}{\epsilon}\right)\right)$ we have $f(\hat{\x}_k) -f(\x^*) \le \epsilon$. Specifically, the total number of iterations
		required to find such solution is $O\left(\sqrt[3]{\max\left\{\frac{L_g}{\mu}, \frac{L_h}{\mu}\right\}}\, \log\left(\frac{1}{\epsilon}\right)\right)$.
	\end{theorem}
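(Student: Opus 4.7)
The plan is to turn the sublinear $O(1/l^3)$ rate of Theorem \ref{Thm:AARC-Main} into a linear rate via a restart strategy that exploits strong convexity. The outer sequence $\hat{\x}_k$ is defined precisely as $m$ inner iterations of Algorithm \ref{Algorithm: AARC} applied from $\hat{\x}_{k-1}$, so the analysis reduces to showing that a single restart produces a constant-factor geometric reduction of $f(\hat{\x}_k) - f(\x^*)$, and then multiplying per-restart cost by the number of restarts.

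First I would estimate the constant $C_1$ that Theorem \ref{Thm:AARC-Main} produces when applied from the restart point $\hat{\x}_{k-1}$ (playing the role of $\x_0$). Since the first successful iteration in Phase I cannot increase $f$ (the candidate $\s=0$ gives $m = f$, and combined with the success criterion $f(\x+\s) < m(\x,\s,\sigma)$ this yields $f(\bar{\x}_1) \leq f(\hat{\x}_{k-1})$), the point $\bar{\x}_1$ lies in the initial level set and $\|\bar{\x}_1-\x^*\|\leq D$. Substituting into the formula for $C_1$ gives
\begin{equation*}
C_1 \;\leq\; \tau_1 \|\hat{\x}_{k-1}-\x^*\|^3 + \tau_2 \|\hat{\x}_{k-1}-\x^*\|^2.
\end{equation*}
Trading one factor of the cubic term against the level-set bound $\|\hat{\x}_{k-1}-\x^*\|\leq D$ and invoking strong convexity $\|\hat{\x}_{k-1}-\x^*\|^2 \leq \tfrac{2}{\mu}(f(\hat{\x}_{k-1})-f(\x^*))$ on the remainder, I obtain
\begin{equation*}
C_1 \;\leq\; \frac{2(\tau_1 D + \tau_2)}{\mu}\bigl(f(\hat{\x}_{k-1})-f(\x^*)\bigr).
\end{equation*}

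Next, with the specific choice $l = 2\bigl(\tfrac{\tau_1 D + \tau_2}{\mu}\bigr)^{1/3}$ — exactly the quantity that contributes the $+1$-augmented factor in the definition of $m$ — Theorem \ref{Thm:AARC-Main} delivers $f(\hat{\x}_k)-f(\x^*) \leq C_1/l^3 \leq \tfrac{1}{4}(f(\hat{\x}_{k-1})-f(\x^*))$. Unrolling this contraction gives $f(\hat{\x}_k)-f(\x^*) \leq 4^{-k}(f(\hat{\x}_0)-f(\x^*))$, so $k = O(\log(1/\epsilon))$ outer restarts are enough. Multiplying the per-restart count $m$ — whose dominant piece is precisely this $l$, of order $O(\sqrt[3]{\max\{L_g/\mu, L_h/\mu\}})$ once $D$ and other problem-independent factors are absorbed into the big-$O$ — by the outer count $O(\log(1/\epsilon))$ recovers the total iteration bound stated in the theorem.

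The main obstacle is the mismatch between the cubic norms in $C_1$ and the quadratic bound provided by strong convexity: one cannot directly get a bound proportional to the functional gap, so one factor of $\|\hat{\x}_{k-1}-\x^*\|$ must be peeled off against the level-set diameter $D$. Making this rigorous requires that every restart point, and every Phase I output produced \emph{inside} each restart, stays in the initial level set. Monotonicity $f(\hat{\x}_k)\leq f(\hat{\x}_{k-1})$ is automatic from the geometric contraction above, and the Phase I output is handled by the $\s=0$ argument noted earlier; hence the diameter bound $D$ remains valid throughout and the induction closes.
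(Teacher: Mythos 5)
Your proposal is correct and follows essentially the same route as the paper: apply the $O(1/l^3)$ bound from Theorem \ref{Thm:AARC-Main} with $\hat{\x}_{k-1}$ as the new $\x_0$, peel one factor of $\|\hat{\x}_{k-1}-\x^*\|$ off the cubic term against the level-set diameter $D$, convert the remaining squared distance to a functional gap via strong convexity, and choose $m\sim\bigl((\tau_1 D+\tau_2)/\mu\bigr)^{1/3}$ to force a fixed $\tfrac14$ contraction per restart. You even flag (and gesture at patching) the point the paper treats implicitly — namely that $\bar{\x}_1$ must stay within $\|\cdot-\x^*\|\le D$ for $C_1$ to be absorbed — though note that your $\s=0$ argument for $f(\bar{\x}_1)\le f(\hat{\x}_{k-1})$ is not airtight as stated, since Condition \ref{Cond:Approx_Subprob} controls only $\|\nabla m\|$ and does not by itself guarantee $m(\x_i,\s_i,\sigma_i)\le m(\x_i,0,\sigma_i)=f(\x_i)$.
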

	\begin{proof}
		By Theorem \ref{Theorem:AARC-Main}, we have
		\begin{eqnarray*}
			& & f(\hat{\x}_{k+1}) - f(\x^*) \\
			& \leq & \frac{1}{m^3} \left[ \left(2 L_h + 2\bar{\sigma}_1 + \left(\frac{ L_h+2\bar{\sigma}_2+2\kappa_\theta L_g}{1-\kappa_\theta}\right)^{3} \frac{1}{\eta^2} \right)\| \hat{\x}_k - \x^* \|^3 + \frac{12\kappa_\theta (1+\kappa_\theta) L_g^2}{\sigma_{\min}}\| \hat{\x}_k - \x^*\|^2 \right]
		\end{eqnarray*}
		where the number of successful iteration $m =  \left( (\tau_1 D +\frac{ \tau_2 }{\sigma_{\min}})/ {\frac{\mu}{8}} \right)^{1/3} \ge \left( (\tau_1 \left\|\x_k - \x^*\right\|  +\frac{ \tau_2 }{\sigma_{\min}})/ {\frac{\mu}{8}} \right)^{1/3}$. Combining this with \eqref{Def:Strongly-Convex} implies that
		\begin{equation*}
		f(\hat{\x}_{k+1}) - f(\x^*) \leq  \frac{\mu}{8}\left\| \hat{\x}_k - \x^*\right\|^2 \leq \frac{1}{4}\left(f(\hat{\x}_k) - f(\x^*)\right),
		\end{equation*}
		which proves the first part of the conclusion. Then the total iteration is
		$$m \cdot \log\left(\frac{1}{\epsilon}\right) = O\left(\sqrt[3]{\max\left\{\frac{L_g}{\mu}, \frac{L_h}{\mu}\right\}}\log\left(\frac{1}{\epsilon}\right)\right),$$
		where we want to explore how the iteration complexity dependent on the conditional number $\frac{ L_g}{\mu}$ and $\frac{L_h}{\mu}$, and the Lipschitz parameters $L_g$ and $L_h$ that are not coupling with $\mu$ are treated as constants.
	\end{proof}
	\begin{remark}
		Remark that comparing to \cite{Cartis-2011-Adaptive-I}, the accelerated scheme has improved the dependence of the conditional number from $O\left( \sqrt{\cdot} \ \right)$ to $O \left( \sqrt[3]{\cdot} \ \right)$.
	\end{remark}
	Furthermore, when the objective function is strongly convex, the local quadratic convergence is retained by our adaptive scheme even without solving the cubic sub-problem exactly if we set $0<\kappa_\theta\leq\frac{\mu}{2}$. Indeed, we can construct sequence $\{\w_l ,\; l=1,2,\ldots \}$ such that $\w_{l+1} = \w_l + \bar{\s}_l$ and $\bar{\s}_l$ is obtained by running the subroutine \textsf{SAS} of Algorithm \ref{Algorithm: AARC} with starting point $\w_l$. Then it holds that
	\begin{eqnarray}
	f(\w_l) - f(\w_{l+1}) & \geq & f(\w_l)  - m(\w_l, \bar{\s}_l, \bar{\sigma}_l) \nonumber \\
	& = & -\bar{\s}_l^\top \nabla f(\w_l) - \frac{1}{2} \bar{\s}_l \top \nabla^2 f(\w_l) \bar{\s}_l - \frac{\bar{\sigma}_l}{3}\left\| \bar{\s}_l \right\|^3  \nonumber \\
	& = & -\bar{\s}_l^\top \nabla  m(\w_l, \bar{\s}_l, \bar{\sigma}_l) + \frac{1}{2} \bar{\s}_l^\top \nabla^2 f(\w_l) \bar{\s}_l + \frac{2\bar{\sigma}_l}{3}\left\| \bar{\s}_l \right\|^3  \nonumber \\
	& { \eqref{Eqn:Approx_Subprob} \above 0pt  \geq }  & \frac{1}{2} \bar{\s}_l^\top \nabla^2 f(\w_l) \bar{\s}_l - \kappa_\theta \left\|\bar{\s}_l\right\|^2 \nonumber \\
	& { \eqref{Strongly-Convex-1} \above 0pt  \geq }& \frac{\mu-2\kappa_\theta}{2}\left\| \bar{\s}_l\right\|^2 \nonumber  \\
	& {\text{Lemma }\ref{Lemma:AARC-T3-P2} \above 0pt  \geq } & \frac{(\mu-2\kappa_\theta)(1-\kappa_\theta)}{2(\frac{ L_h}{2}+\bar{\sigma}_2+\kappa_\theta L_g)}\left\| \nabla f(\w_{l+1}) \right\| \nonumber \\
	& { \eqref{Strongly-Convex-2} \above 0pt  \geq } & \frac{(\mu-2\kappa_\theta)(1-\kappa_\theta)}{2(\frac{L_h}{2}+\bar{\sigma}_2+\kappa_\theta L_g)} \sqrt{2 \mu( f(\w_{l+1})-f(\x^*))}. \nonumber
	\end{eqnarray}
	Hence,
	\begin{equation*}
	f(\w_{l+1}) - f(\x^*) \leq \frac{2(\frac{L_h}{2}+\bar{\sigma}_2+\kappa_\theta L_g)^2}{\mu(\mu-2\kappa_\theta)^2(1-\kappa_\theta)^2} \left(f(\w_l) - f(\w_{l+1})\right)^2 \leq \frac{2(\frac{L_h}{2}+\bar{\sigma}_2 +\kappa_\theta L_g)^2}{\mu(\mu-2\kappa_\theta)^2(1-\kappa_\theta)^2} \left(f(\w_l) - f(\x^*)\right)^2,
	\end{equation*}
	and the region of quadratic convergence is given by
	\begin{equation*}
	\QCal = \left\{\w\in\br^d: f(\w)-f(\x^*)\leq\frac{\mu(\mu-2\kappa_\theta)^2(1-\kappa_\theta)^2}{2(\frac{L_h}{2}+\bar{\sigma}_2+\kappa_\theta L_g)^2}\right\} .
	\end{equation*}
	The above discussion suggests that we can first run Algorithm \ref{Algorithm: AARC} until the generated sequence fall into the local quadratic convergence region $\QCal$, and then switch back and stick to \textsf{SAS} by allowing performing multiple successful iterations. This way, one would still benefit from the accelerated global convergence rate before local quadratic convergence becomes effective.
	
	\section{Accelerated Adaptive Cubic Regularization with Inexact Hessian} \label{Section:AARCQ}
	In this section, we study the scenario where the Hessian information is not available; instead, an approximation is used, based on the gradient information. Indeed, as illustrated in Table \ref{tab:function}, we consider the following approximation of $f$ evaluated at $\x_i$ with cubic regularization:
	\begin{equation}\label{prob:AARCQ}
	m(\x_i,\s,\sigma) = f(\x_i) + \s^\top\nabla f(\x_i) + \frac{1}{2}\s^\top H(\x_i) \s + \frac{1}{3}\sigma_i\left\|\s\right\|^3,
	\end{equation}
	where $\sigma_i>0$ is a regularized parameter, and $H(\x_i)$ is an approximation of the Hessian $\nabla^2 f(\x_i)$, i.e., the inexact Hessian. In particular, the inexact Hessian $H(\x_i)$ can be computed by first computing $d$ forward gradient differences at $\x_i$ with stepsize $h_i\in\br$,
	\begin{equation*}
	A_i = \left[\frac{\nabla f(\x_i+h_i \e_1)-\nabla f(\x_i)}{h_i}, \ldots, \frac{\nabla f(\x_i+h_i \e_d)-\nabla f(\x_i)}{h_i}\right],
	\end{equation*}
	symmetrizing the resulting matrix: $\hat H(\x_i) = \frac{1}{2}\left(A_i+A_i^\top\right)$ and then further adding an constant multiple of identity matrix to $\hat H(\x_i)$: $H(\x_i) = \hat H(\x_i) + \kappa_c h_i \BI$,
	where $\e_j$ is the $j$-th vector of the canonical basis.
	 It is well known in \cite{Nocedal-2006-Numerical} that, for some constant $\kappa_{e}>0$, we have
	$\left\| \hat H(\x_i) - \nabla^2 f(\x_i)\right\|\leq \kappa_{e} h_i$. Consequently, it holds that	
	\begin{equation}\label{condition:finite_approx}
	\left\| H(\x_i) - \nabla^2 f(\x_i)\right\|\leq (\kappa_{e} + \kappa_c) h_i.
	\end{equation}
	That is to say, the gap between exact and inexact Hessian can be bounded by a multiple of the stepsize $h_i$. This together with Algorithm 4.1 in \cite{Cartis-2012-Oracle} inspires us to design a procedure to search a pair of $\left(h_i, \s_i\right)$ such that, for some $\kappa_{hs}>0$,
	\begin{equation}\label{condition:stepsize}
	h_i\leq \kappa_{hs}\left\|\s_i\right\|.
	\end{equation}
	Combining \eqref{condition:finite_approx} and \eqref{condition:stepsize} yields that
	\begin{equation}\label{Hessian-Approximation}
	\left\| H(\x_i) - \nabla^2 f(\x_i)\right\|\leq  (\kappa_{e} + \kappa_c) \kappa_{hs}\left\|\s_i\right\|.
	\end{equation}
	Moreover, since $f$ is convex, we set $\kappa_c \ge \kappa_e$ such that
	\begin{equation}\label{Hessian-Convex}
	H(\x_i) = \hat H(\x_i) + \kappa_c h_i \BI \succeq \nabla^2 f(\x_i) - \kappa_e h_i \BI + \kappa_c h_i \BI \succeq0.
	\end{equation}
	Now we propose the accelerated adaptive cubic regularization of Newton's method with inexact Hessian in Algorithm \ref{Algorithm: AARCQ}. In each iteration we instead approximately solve
	\begin{equation*}
	\s_i\approx\argmin_{\s\in\br^d} \ m(\x_i,\s,\sigma_i),
	\end{equation*}
	where $m(\x_i,\s,\sigma_i)$ is defined in \eqref{prob:AARCQ} and the symbol ``$\approx$'' is quantified in Condition \ref{Cond:Approx_Subprob}, and \eqref{Hessian-Approximation} is a key property that will be used in the iteration complexity analysis for Algorithm \ref{Algorithm: AARCQ}.

	\begin{algorithm}[!p]
		\begin{algorithmic} \scriptsize
			\STATE Given $\gamma_2>\gamma_1>1$, $\gamma_3>1$, $\gamma_4\in (0,1)$,  and $\sigma_{\min} > 0$.  Specify $m(\x_i,\s,\sigma_i)$ as in Table \ref{tab:function}.\\  Choose $\x_0\in\br^d$, $\sigma_0 \geq \sigma_{\min}$,  $h_{0,0}\in\left(0,1\right]$, and $\varsigma_1>0$.
			\STATE \textbf{Begin Phase I:} \textsf{Simple Adaptive Subroutine (SAS)}
			\FOR{$i=0,1,2,\ldots$}
			\FOR{$k=0,1,2,\ldots$}
			\STATE Compute $H_k(\x_i)$ using the finite difference with stepsize $h_{i,k}$ and the iterate $\x_i$;
			\STATE Compute $\s_{i,k}\in\br^d$ such that $\s_{i,k}\approx\argmin_{\s\in\br^d} \ m(\x_i,\s,\sigma_i)$ with the inexact Hessian $H_k(\x_i)$.
			\IF{$h_{i,k} > \kappa_{hs}\left\| \s_{i,k}\right\|$}
			\STATE $h_{i,k+1}=\gamma_4 h_{i,k}$;
			\ELSE
			\STATE $\s_i=\s_{i,k}$ and $h_i=h_{i,k}$;
			\STATE $\textbf{break}$.
			\ENDIF
			\ENDFOR
			\STATE Let $h_{i+1, 0} = h_i$ and compute $\rho_i=f(\x_i+\s_i)-m(\x_i, \s_i, \sigma_i)$.
			\IF{$\rho_i<0$ [successful iteration] }
			\STATE Set $\x_{i+1}=\x_i+\s_i$ and choose $\sigma_{i+1}\in\left[\sigma_{\min}, \sigma_i\right]$;
			\STATE Record the total number of iterations of \textsf{SAS}: $T_1 = i+1$;
			\STATE $\textbf{break}$
			\STATE Set $\x_{i+1}=\x_i$, and choose $\sigma_{i+1}\in\left[\gamma_1\sigma_i, \gamma_2\sigma_i\right]$.
			\ENDIF
			\IF {$\| \nabla f(\x_{i+1}) \| < \epsilon$}
			\STATE $\textbf{break and skip Phase II}$.
			\ENDIF
			\ENDFOR
			\STATE \textbf{End Phase I:} \textsf{Simple Adaptive Subroutine}
			\vspace{0.3 cm}
			
			\STATE \textbf{Begin Phase II:} \textsf{Accelerated Adaptive Subroutine (AAS)}\\
			\STATE Set the count of successful iterations $l=1$ and let $\bar{\x}_1 = \x_{T_1}$.
			\STATE Construct $\psi_1(\z) = f(\bar{\x}_1) + \frac{1}{6}\varsigma_1\|\z -\bar{\x}_1 \|^3$, and let $\z_1= \argmin_{\z\in \br^d} \psi_1(\z)$, and choose $\y_1=\frac{1}{4}\bar{\x}_1 + \frac{3}{4}\z_1$. \\
			\FOR{$j=0,1,2\ldots$}
			\FOR{$k=0,1,2,\ldots$}
			\STATE Compute $H_k(\y_l)$ using the finite difference with stepsize $h_{T_1+j,k}$ and the iterate $\y_l$.
			\STATE Compute $\s_{T_1+j, k}\in\br^d$ such that $\s_{T_1+j, k}\approx\argmin_{\s\in\br^d} \ m(\y_l,\s,\sigma_{T_1+j})$ with the inexact Hessian $H_k(\y_l)$.
			\IF{$h_{T_1+j,k} > \kappa_{hs}\left\| \s_{T_1+j,k}\right\|$}
			\STATE $h_{T_1+j,k+1}=\gamma_4 h_{T_1+j,k}$;
			\ELSE
			\STATE $\s_{T_1+j}=\s_{T_1+j,k}$ and $h_{T_1+j}=h_{T_1+j,k}$;
			\STATE $\textbf{break}$.
			\ENDIF
			\ENDFOR
			\STATE Set $h_{T_1+j+1, 0} = h_{T_1+j}$, and compute 		$\rho_{T_1+j}=-\frac{\s_{T_1+j}^\top\nabla f(\y_l+\s_{T_1+j})}{\left\|\s_{T_1+j}\right\|^3}$;
			\IF{$\rho_{T_1+j}\geq\eta$ [successful iteration]}
			\STATE Let $\x_{T_1+j+1}=\y_l+\s_{T_1+j}$ and choose $\sigma_{T_1+j+1}\in\left[\sigma_{\min}, \sigma_{T_1+j}\right]$;
			\STATE Set $l=l+1$ and $\varsigma= \varsigma_{l-1}$;
			\STATE Update $\psi_l(\z)$ as illustrated in Table \ref{tab:auxiliary} by using $\varsigma_l=\varsigma$, and compute $\z_l=\argmin_{\z\in\br^d} \ \psi_l(\z)$.
			\WHILE{$\psi_l(\z_l) \geq \frac{l(l+1)(l+2)}{6} f(\bar{\x}_l) $}
			\STATE Set $\varsigma=\gamma_3\varsigma$, and $\psi_l(\z)=\psi_{l-1}(\z) + \frac{l(l+1)}{2}\left[f(\x_{T_1+j+1})+\left(\z-\x_{T_1+j+1}\right)^\top\nabla f(\x_{T_1+j+1})\right] + \frac{1}{6}\left(\varsigma-\varsigma_{l-1}\right)\|\z-\bar{\x}_1\|^3$;
			\STATE Compute $\z_l=\argmin_{\z\in\br^d} \ \psi_l(\z)$.
			\ENDWHILE
			\STATE Let $\varsigma_l=\varsigma$, $\bar{\x}_l = \x_{T_1+j+1} $ and $\y_l=\frac{l}{l+3}\bar{\x}_l + \frac{3}{l+3}\z_l$.
			\ELSE
			\STATE Let $\x_{T_1+j+1}=\x_{T_1+j}$, $\sigma_{T_1+j+1}\in\left[\gamma_1\sigma_{T_1+j}, \gamma_2\sigma_{T_1+j}\right]$;
			\ENDIF
			\IF {$\| \nabla f(\x_{T_1+j+1}) \| < \epsilon$}
			\STATE $\textbf{break}$.
			\ENDIF
			\ENDFOR
			\STATE Record the total number of iterations of \textsf{AAS}: $T_2 = j+1$.
			\STATE \textbf{End Phase II:} \textsf{Accelerated Adaptive Subroutine}
		\end{algorithmic} \caption{Accelerated Adaptive Cubic Regularization for Newton's Method with Inexact Hessian}\label{Algorithm: AARCQ}
	\end{algorithm}
	\subsection{The Convex Case}
	In this subsection, we aim to analyze the theoretical performance of  Algorithm \ref{Algorithm: AARCQ}.
	The main difference between Algorithm \ref{Algorithm: AARCQ} and Algorithm \ref{Algorithm: AARC} is an extra inner loop to update $\left\{h_{i,k}, i, k=0,1,2,\ldots \right\}$. We denote $T_4$ by the total number of the successful count of updating the sequence $\left\{h_{i,k},i, k=0,1,2,\ldots \right\}$ in the inner loop. Thus the road map for proving the iteration complexity of Algorithm \ref{Algorithm: AARCQ} is similar to that of Algorithm \ref{Algorithm: AARC} presented in Section \ref{ProofOutlines} except for the bounding of $T_4$. Therefore, we only estalish the bound for $T_4$ and postpone the rest of the proofs to the appendix. Since $\left\{h_{i,k},i, k=0,1,2,\ldots \right\}$ is monotonically decreasing and $h_{i+1,0}=h_{i}$ where $h_i$ is the final output in the last inner loop, it suffices to estimate the lower bound of the sequence $\left\{h_{i,k},i, k=0,1,2,\ldots \right\}$.
	\begin{lemma}\label{Lemma:AARCQ-T4}
		When $\epsilon$ is sufficiently small, the total number of iterations $T_4$ in the inner loop can not exceed
		\begin{equation*}
		\left\lceil -\frac{1}{\log(\gamma_4)} \log\left[\frac{({L_g + (\kappa_{e}+ \kappa_{c} )\kappa_{hs} + \bar{\sigma}_2 })h_{0,0}}{(1-\kappa_\theta) \kappa_{hs}}\cdot\frac{1}{\epsilon}\right]  \right\rceil
		\end{equation*}
	\end{lemma}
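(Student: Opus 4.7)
The plan is to show that once $h_{i,k}$ drops below an $\epsilon$-dependent threshold $\tau$, the exit test $h_{i,k}\le\kappa_{hs}\|\s_{i,k}\|$ of the inner loop must be triggered; since each unsuccessful inner step shrinks $h$ by the constant factor $\gamma_4\in(0,1)$ and the full sequence $\{h_{i,k}\}$ is monotonically decreasing (because $h_{i+1,0}=h_i$ across outer iterations), counting the halving steps needed to cross $\tau$ yields the stated bound on $T_4$.

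The core of the argument is to lower-bound $\|\s_{i,k}\|$ in a way that is essentially independent of $h_{i,k}$ in the regime of interest. To do this, I would expand $\nabla m(\x_i,\s_{i,k},\sigma_i)=\nabla f(\x_i)+H_k(\x_i)\s_{i,k}+\sigma_i\|\s_{i,k}\|\s_{i,k}$ and feed Condition~\ref{Cond:Approx_Subprob}, together with the crude bound $\kappa_\theta\min(1,\|\s\|)\min(\|\s\|,\|\nabla f\|)\le\kappa_\theta\|\nabla f(\x_i)\|$, into the triangle inequality to get
\begin{equation*}
(1-\kappa_\theta)\|\nabla f(\x_i)\|\le\|H_k(\x_i)\|\,\|\s_{i,k}\|+\sigma_i\|\s_{i,k}\|^2.
\end{equation*}
Then I would invoke \eqref{condition:finite_approx} and Assumption~\ref{Assumption-Objective-Gradient-Hessian} to get $\|H_k(\x_i)\|\le L_g+(\kappa_e+\kappa_c)h_{i,k}$, and repeat the $\sigma$-bound argument of Lemma~\ref{Lemma:AARC-T2} for the inexact setting to obtain $\sigma_i\le\bar{\sigma}_2$. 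Because neither Phase~I nor Phase~II has terminated (both break out via $\|\nabla f\|<\epsilon$), we have $\|\nabla f(\x_i)\|\ge\epsilon$ (and analogously $\|\nabla f(\y_l)\|\ge\epsilon$ for the inner loop inside \textsf{AAS}, for which the same derivation goes through verbatim).

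Now, since $h_{i,k}$ is monotone decreasing across the global inner-loop counter, for $\epsilon$ sufficiently small we are eventually in the regime $h_{i,k}\le\kappa_{hs}$ and $\|\s_{i,k}\|\le1$, in which the quadratic term $\sigma_i\|\s_{i,k}\|^2$ is dominated by a linear one and the displayed inequality collapses to
\begin{equation*}
\|\s_{i,k}\|\ge\frac{(1-\kappa_\theta)\epsilon}{L_g+(\kappa_e+\kappa_c)\kappa_{hs}+\bar{\sigma}_2}.
\end{equation*}
Consequently, the exit test $h_{i,k}\le\kappa_{hs}\|\s_{i,k}\|$ is automatically satisfied as soon as $h_{i,k}\le\tau:=\kappa_{hs}(1-\kappa_\theta)\epsilon/(L_g+(\kappa_e+\kappa_c)\kappa_{hs}+\bar{\sigma}_2)$.

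Finally, since every unsuccessful inner step replaces $h$ by $\gamma_4 h$, after $T_4$ total inner updates we have $h=\gamma_4^{T_4}h_{0,0}$. Requiring $\gamma_4^{T_4}h_{0,0}\le\tau$ and using $\log\gamma_4<0$ gives $T_4\le\lceil-\log(h_{0,0}/\tau)/\log\gamma_4\rceil$, which is exactly the claimed bound once $\tau$ is substituted. The hypothesis that $\epsilon$ be sufficiently small is used only to guarantee the simplifying regime $\|\s_{i,k}\|\le1$, $h_{i,k}\le\kappa_{hs}$, $\tau<h_{0,0}$, all of which require $\epsilon$ below an explicit constant in $L_g,\kappa_e,\kappa_c,\kappa_{hs},\bar{\sigma}_2,h_{0,0}$. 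The only delicate step is ensuring that $\bar{\sigma}_2$ remains a valid upper bound in the inexact-Hessian setting; this is the one place where one must carefully adapt Lemma~\ref{Lemma:AARC-T2}, replacing the Lipschitz-Hessian error term by the sum of that term and the $O(h_{i,k})$ Hessian-approximation error, which is harmless since $h_{i,k}\le h_{0,0}\le 1$.
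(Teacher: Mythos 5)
Your proof is correct and follows essentially the same route as the paper's: both lower-bound $\|\s_{i,k}\|$ by combining Condition~\ref{Cond:Approx_Subprob} with the triangle inequality, invoke the regularization bound $\sigma_i\le\bar{\sigma}_2$ and $\|\nabla f\|\ge\epsilon$ prior to termination, and then count geometric shrinks of the monotone sequence $\{h_{i,k}\}$. The only (cosmetic) difference is that you control the Hessian-approximation error directly via \eqref{condition:finite_approx} together with the eventual regime $h_{i,k}\le\kappa_{hs}$, whereas the paper invokes \eqref{Hessian-Approximation} for the accepted step; your variant holds uniformly over the inner iterates and avoids leaning on the exit test, but both yield the identical constant and the same bound on $T_4$.
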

	\begin{proof}
		Note that before Algorithm \ref{Algorithm: AARCQ} terminates, we always have $\|\nabla f(\x_{r})\| \ge \epsilon$ for 
		any iterate $\x_r$ in the process except the last one and $\left\{h_{i,k},i, k=0,1,2,\ldots \right\}$ is not updated for the last iterate.
		We let $\x_j$ be the second last iterate before termination with $\s_j=\s_{j,k}$ and stepsize $h_{j,k}$ for Hessian approximation.
		According to Condition~\ref{Cond:Approx_Subprob}, we have
		$$
		\left\| \nabla f(\x_j) +H_j \s_{j,k} + \sigma_j \| \s_{j,k}\|\cdot\s_{j,k} \right\| \le \kappa_{\theta} \min(1,\|\s_{j,k}\|\cdot \|f(\x_j)\| ),
		$$
		which impies that
		$$
		\kappa_{\theta} \| \nabla f (\x_i) \| \ge \left\| \nabla f(\x_j) \| - \| H_j \s_{j,k} + \sigma_j \| \s_{j,k}\|\cdot\s_{j,k} \right\|.
		$$
		As a result,
		\begin{eqnarray*}
			(1 - \kappa_{\theta}) \| \nabla f (\x_j) \| &\le& \left\| H_j \s_{j,k} + \sigma_j \| \s_{j,k}\|\cdot\s_{j,k} \right\| \nonumber \\
			&\le& \| H_j - \nabla^2 f(\x_j)\|\cdot\| \s_{j,k}\| + \| \nabla^2 f(\x_j)\| \cdot \| \s_{j,k}\|+ \sigma_j \| \s_{j,k}\|^2 \nonumber \\
			& \le& (\kappa_{e} + \kappa_{c}) \kappa_{hs}\left\|\s_{j,k}\right\|^2 + L_{g}\left\|\s_{j,k}\right\| + \sigma_j \| \s_{j,k}\|^2.
		\end{eqnarray*}
		where the third inequality is due to mean value theorem and \eqref{Hessian-Approximation}. Consequently,
		\begin{equation*}
		\epsilon \le \| \nabla f (\x_j) \| \le \frac{L_g + (\kappa_{e}+ \kappa_{c} )\kappa_{hs}\| \s_{j,k}\| + \bar{\sigma}_2  \|\s_{j,k}\| }{1 - \kappa_{\theta} } \|\s_{j,k}\|
		\end{equation*}
		with $\bar{\sigma}_2$ is defined in Lemma~\ref{Lemma:AARCQ-T2}, and thus we have
		\begin{equation}\label{h-inner-inq}
		\min\left\{1, \; \frac{\epsilon(1 - \kappa_{\theta})}{L_g + (\kappa_{e}+ \kappa_{c} )\kappa_{hs} + \bar{\sigma}_2 } \right\} \le \|\s_{j,k}\|
		\end{equation}	
		That is $\|\s_{j,k}\|$ has a constant lower bound. Since $\left\{h_{i,k},i, k=0,1,2,\ldots \right\}$ is a monotonically decreasing sequence, $h_{i,k}$ will not be updated as long as $h_{0,0}\gamma_4^{T_4} \le \kappa_{hs}\left\|\s_{j,k}\right\|$, and 
		according to \eqref{h-inner-inq} with a sufficiently small $\epsilon$ this can be achieved by letting
		$$
		T_4 =  \left\lceil -\frac{1}{\log(\gamma_4)} \log\left[\frac{({L_g + (\kappa_{e}+ \kappa_{c} )\kappa_{hs} + \bar{\sigma}_2 })h_{0,0}}{(1-\kappa_\theta) \kappa_{hs}}\cdot\frac{1}{\epsilon}\right]  \right\rceil.
		$$

		%
	\end{proof}
	
	Recall that $l= 1,2,\ldots$ is the count of successful iterations, and the sequence $\{ \bar{\x}_l, \ l=1,2,\ldots \}$ is updated when a successful iteration is identified. The iteration complexity result is presented in Theorem \ref{Theorem:AARCQ-Main} and Theorem \ref{Thm:AARCQ-Main}.
	\begin{theorem}\label{Theorem:AARCQ-Main}
		The sequence $\{ \bar{\x}_l, \ l=1,2,\ldots\}$ generated by Algorithm \ref{Algorithm: AARCQ} satisfies
		\begin{eqnarray*}
			& & \frac{l(l+1)(l+2)}{6}f(\bar{\x}_l) \leq \psi_l(\z_l) \leq \psi_l(\z) \\
			& \leq & \frac{l(l+1)(l+2)}{6}f(\z) + \frac{L_h + \bar{\sigma}_1 + (\kappa_{e}+ \kappa_{c} )\kappa_{hs} } {2}\left\| \z-\x_0\right\|^3 
			+ \frac{2\kappa_\theta (1+\kappa_\theta) L_g^2}{\sigma_{\min} - (\kappa_{e}+ \kappa_{c} )\kappa_{hs}}\left\| \x_0-\x^*\right\|^2 + \frac{1}{6}\varsigma_l\left\|\z-\bar{\x}_1\right\|^3 ,
		\end{eqnarray*}
		where
		\begin{equation*}
		\bar{\sigma}_1= \max\left\{\sigma_0, \frac{3\gamma_2 L_h + \gamma_2 (\kappa_{e}+ \kappa_{c} )\kappa_{hs}}{2}\right\} > 0.
		\end{equation*}
	\end{theorem}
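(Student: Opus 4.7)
The plan is to follow the same induction-on-$l$ scaffold that proved Theorem~\ref{Theorem:AARC-Main}, with the base case $l=1$ playing the role of Theorem~\ref{Theorem:AARC-Main-Prep}. The two left-most inequalities $\frac{l(l+1)(l+2)}{6}f(\bar{\x}_l) \le \psi_l(\z_l) \le \psi_l(\z)$ are free: the first is enforced by the \textbf{while}-loop in AAS that keeps inflating $\varsigma$ until it holds, and the second is just the definition $\z_l=\argmin_\z \psi_l(\z)$. Only the final (rightmost) bound on $\psi_l(\z)$ requires work.

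For the inductive step, I would copy the exact-Hessian argument verbatim: using the recursion $\psi_{l+1}(\z)=\psi_l(\z)+\frac{(l+1)(l+2)}{2}[f(\bar{\x}_l)+(\z-\bar{\x}_l)^\top\nabla f(\bar{\x}_l)]+\frac{1}{6}(\varsigma_{l+1}-\varsigma_l)\|\z-\bar{\x}_1\|^3$ from Table~\ref{tab:auxiliary}, substituting the inductive upper bound for $\psi_l(\z)$, and collapsing the linearization plus $f(\bar{\x}_l)$ to $f(\z)$ via convexity. This step never sees the Hessian, so nothing changes between Algorithm~\ref{Algorithm: AARC} and Algorithm~\ref{Algorithm: AARCQ}.

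The base case is where the inexact Hessian $H(\x_{T_1-1})$ enters. Starting from $f(\bar{\x}_1)=f(\x_{T_1})\le m(\x_{T_1-1},\s_{T_1-1},\sigma_{T_1-1})$ (SAS successful criterion), I would split $m(\x_{T_1-1},\s_{T_1-1},\sigma_{T_1-1}) = [m(\x_{T_1-1},\s_{T_1-1},\sigma_{T_1-1})-m(\x_{T_1-1},\s^m_{T_1-1},\sigma_{T_1-1})]+m(\x_{T_1-1},\s^m_{T_1-1},\sigma_{T_1-1})$, exactly as in Theorem~\ref{Theorem:AARC-Main-Prep}, where $\s^m_{T_1-1}$ is the exact minimizer of $m(\x_{T_1-1},\cdot,\sigma_{T_1-1})$. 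Convexity of $m$ in $\s$ (guaranteed by \eqref{Hessian-Convex}, i.e.\ $H(\x_{T_1-1})\succeq 0$) together with Condition~\ref{Cond:Approx_Subprob} controls the first bracket by $\kappa_\theta\|\nabla f(\x_{T_1-1})\|\|\s_{T_1-1}\|\|\s_{T_1-1}-\s^m_{T_1-1}\|$. To bound $\|\s^m_{T_1-1}\|$ and $\|\s_{T_1-1}\|$, I would redo the cubic identity $\sigma_i\|\s\|^3=\s^\top(\nabla m-\nabla f(\x_i)-H(\x_i)\s)$ but now write $\s^\top H(\x_i)\s\ge \s^\top\nabla^2 f(\x_i)\s-(\kappa_e+\kappa_c)\kappa_{hs}\|\s_{T_1-1}\|\|\s\|^2$ using \eqref{Hessian-Approximation} and discard the (nonnegative) Hessian term; this is precisely the source of the $\sigma_{\min}-(\kappa_e+\kappa_c)\kappa_{hs}$ denominator. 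Then $\|\nabla f(\x_{T_1-1})\|\le L_g\|\x_0-\x^*\|$ yields the $\frac{2\kappa_\theta(1+\kappa_\theta)L_g^2}{\sigma_{\min}-(\kappa_e+\kappa_c)\kappa_{hs}}\|\x_0-\x^*\|^2$ term. For the second bracket, minimality of $\s^m_{T_1-1}$ gives $m(\x_{T_1-1},\s^m_{T_1-1},\sigma_{T_1-1})\le m(\x_{T_1-1},\z-\x_{T_1-1},\sigma_{T_1-1})$; substituting $H=\nabla^2 f+(H-\nabla^2 f)$ and invoking Assumption~\ref{Assumption-Objective-Gradient-Hessian} for the Taylor remainder plus \eqref{Hessian-Approximation} for the correction produces the cubic coefficient $\frac{L_h+\bar{\sigma}_1+(\kappa_e+\kappa_c)\kappa_{hs}}{2}$.

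The main obstacle will be propagating the cross-term $(\kappa_e+\kappa_c)\kappa_{hs}\|\s_{T_1-1}\|\|\z-\x_{T_1-1}\|^2$ cleanly into a pure $\|\z-\x_0\|^3$ contribution, because \eqref{Hessian-Approximation} introduces a factor of $\|\s_{T_1-1}\|$ that is foreign to the target inequality. The resolution is to combine Young's inequality on $\|\s_{T_1-1}\|\cdot\|\z-\x_{T_1-1}\|^2$ with the a priori bound $\sigma_{T_1-1}\ge\sigma_{\min}$ on the cubic regularizer coefficient, absorbing any leftover $\|\s_{T_1-1}\|^3$ into the modified $\bar{\sigma}_1$ term — this is exactly why $\bar{\sigma}_1$ here is augmented to $\max\{\sigma_0,\frac{3\gamma_2 L_h+\gamma_2(\kappa_e+\kappa_c)\kappa_{hs}}{2}\}$ compared with the exact case. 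Once this bookkeeping is done, the remainder of the induction, and the subsequent overall complexity bound (Theorem~\ref{Thm:AARCQ-Main}), follow by the same arithmetic as in Section~\ref{Section:AARC}.
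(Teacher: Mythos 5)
Your proposal matches the paper's proof in all essentials: the same induction with the two left inequalities coming for free from the \textbf{while}-loop and the definition of $\z_l$, the same Hessian-free inductive step via convexity, and the same base-case decomposition through the exact minimizer $\s^m_{T_1-1}$ with Condition~\ref{Cond:Approx_Subprob} and convexity of $m$ from \eqref{Hessian-Convex}. The only (cosmetic) divergence is in the step-length bound: the paper's appendix proof simply drops $-\s^\top H\s\le 0$ using $H\succeq 0$ and obtains a plain $\sigma_{\min}$ denominator, whereas you route through \eqref{Hessian-Approximation} to produce the $\sigma_{\min}-(\kappa_e+\kappa_c)\kappa_{hs}$ appearing in the theorem statement --- both are valid, and your explicit handling of the $\|\s_{T_1-1}\|\,\|\z-\x_{T_1-1}\|^2$ cross-term is in fact more careful than the paper's.
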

	\begin{proof}
		The proof is based on mathematical induction. The base case of $l=1$ can be found in Theorem \ref{Theorem:AARCQ-Main-Prep}. Suppose that the theorem is true for some $l\geq 1$. Let us consider the case of $l+1$:
		\begin{eqnarray*}
			\psi_{l+1}(\z_{l+1})  & \le & \psi_{l+1}(\z) \\
			& \leq & \frac{l(l+1)(l+2)}{6}f(\z) + \frac{L_h+\bar{\sigma}_1 +(\kappa_{e}+ \kappa_{c} )\kappa_{hs}}{2}\left\|\z-\x_0\right\|^3 + \frac{1}{6}\varsigma_l\left\|\z-\bar{\x}_1\right\|^3 + \frac{2\kappa_\theta(1+\kappa_\theta)  L_g^2}{\sigma_{\min}}\|\x_0-\x^*\|^2 \\
			& & + \frac{(l+1)(l+2)}{2}\left[ f(\bar{\x}_l)+\left(\z-\bar{\x}_l\right)^\top\nabla f(\bar{\x}_l)\right]+ \frac{1}{6}\left(\varsigma_{l+1}-\varsigma_l\right) \left\|\z-\bar{\x}_1\right\|^3 \\
			& \leq & \frac{(l+1)(l+2)(l+3)}{6}f(\z) + \frac{L_h+\bar{\sigma}_1 +(\kappa_{e}+ \kappa_{c} )\kappa_{hs}}{2}\left\|\z-\x_0\right\|^3 \\
			& & + \frac{2\kappa_\theta (1+\kappa_\theta) L_g^2}{\sigma_{\min}}\|\x_0 - \x^*\|^2 + \frac{1}{6}\varsigma_{l+1}\left\|\z-\bar{\x}_1\right\|^3 ,
		\end{eqnarray*}
		where the last inequality is due to convexity of $f(\z)$.
		On the other hand, it follows from the way that $\psi_{l+1}(\z)$ is updated that $\frac{(l+1)(l+2)(l+3)}{6}f(\bar{\x}_{l+1}) \leq \psi_{l+1}(\z_{l+1})$, and thus Theorem~\ref{Theorem:AARCQ-Main} is proven.
	\end{proof}
	The established Theorem~\ref{Theorem:AARCQ-Main} implies the following main result on iteration complexity of Algorithm \ref{Algorithm: AARCQ}.
	\begin{theorem}\label{Thm:AARCQ-Main}
		The sequence $\{\bar{\x}_l, \ l=1,2,\ldots\}$ generated by Algorithm \ref{Algorithm: AARCQ} satisfies that
		\begin{equation*}
		f(\bar{\x}_l)-f(\x^*) \leq \frac{C_2}{l(l+1)(l+2)} \leq \frac{C_2}{l^3},
		\end{equation*}
		where
		\begin{eqnarray*}
			C_2 & = & \left(3 L_h + 3\bar{\sigma}_1 + 3(\kappa_{e}+ \kappa_{c} )\kappa_{hs} \right)\| \x_0 - \x^* \|^3 +  \left(\frac{ L_h+2\bar{\sigma}_2+2(\kappa_{e}+ \kappa_{c} )\kappa_{hs}+2\kappa_\theta L_g}{1-\kappa_\theta}\right)^{3}\frac{1}{\eta^2}\left\|\bar{\x}_1 - \x^*\right\|^3 \\
			& & + \frac{12\kappa_\theta (1+\kappa_\theta) L_g^2}{\sigma_{\min} - (\kappa_{e}+ \kappa_{c} )\kappa_{hs}}\left\|\x_0-\x^*\right\|^2.
		\end{eqnarray*}
		When $\epsilon$ is sufficiently small, the total number of iterations required to find $\bar{\x}_k$ such that $f(x_k)-f(x^*)\leq \max\{\epsilon, \epsilon D\}$
		is
		\begin{eqnarray*}
			k & \leq & 1+\frac{2}{\log(\gamma_1)}\log\left(\frac{\bar{\sigma}_1}{\sigma_{\min}}\right) + \left(1+ \frac{2}{\log(\gamma_1)}\log\left(\frac{\bar{\sigma}_2}{\sigma_{\min}}\right)\right)\left[\left(\frac{C_2}{\epsilon}\right)^{\frac{1}{3}}+1\right] \\
			& & + \left\lceil \frac{1}{\log(\gamma_3)}\log\left[\left(\frac{L_h+2\bar{\sigma}_2+2(\kappa_{e}+ \kappa_{c} )\kappa_{hs}+2\kappa_\theta L_g}{1-\kappa_\theta}\right)^{3} \frac{1}{\eta^2\varsigma_1} \right] \right\rceil \\
			& & + \left\lceil -\frac{1}{\log(\gamma_4)} \log\left[\frac{({L_g + (\kappa_{e}+ \kappa_{c} )\kappa_{hs} + \bar{\sigma}_2 })h_{0,0}}{(1-\kappa_\theta) \kappa_{hs}}\cdot\frac{1}{\epsilon}\right]  \right\rceil,
		\end{eqnarray*}
		where
		\begin{equation*}
		\bar{\sigma}_2 = \max\left\{\bar{\sigma}_1,\frac{\gamma_2 L_h}{2}+\gamma_2\kappa_{\theta}+\gamma_2(\kappa_{e}+ \kappa_{c} )\kappa_{hs}+\gamma_2\eta\right\}>0.
		\end{equation*}
	\end{theorem}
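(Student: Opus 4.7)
The plan is to mirror the proof of Theorem \ref{Thm:AARC-Main} essentially verbatim, substituting the bound from Theorem \ref{Theorem:AARCQ-Main} for its exact-Hessian counterpart and appending one extra term coming from the inner loop that searches for the finite-difference stepsize $h_{i,k}$. First I would put $\z = \x^*$ into Theorem \ref{Theorem:AARCQ-Main}; the linear-in-$\z$ terms vanish since $\nabla f(\x^*) = 0$ is built into $\psi_l$ via the successive updates, the $f(\x^*)$ terms cancel, and dividing through by $\tfrac{l(l+1)(l+2)}{6}$ leaves $f(\bar{\x}_l) - f(\x^*) \le C_2 / (l(l+1)(l+2)) \le C_2/l^3$, once the $\|\x_0 - \x^*\|^3$, $\|\bar{\x}_1 - \x^*\|^3$ and $\|\x_0 - \x^*\|^2$ coefficients are collected into $C_2$ and the $\tfrac{1}{6}\varsigma_l \|\x^* - \bar{\x}_1\|^3$ term is bounded by invoking the AARCQ analog of Lemma \ref{Lemma:AARC-T3} (which only changes by absorbing the extra $(\kappa_e+\kappa_c)\kappa_{hs}$ contribution from \eqref{Hessian-Approximation} into $\bar{\sigma}_2$).

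Second, to convert this rate into an iteration count, I would solve $C_2/l^3 \le \epsilon$ for $l$, obtaining that $\lceil (C_2/\epsilon)^{1/3} \rceil + 1$ successful iterations in \textsf{AAS} are enough. The overall iteration count is then the sum of four contributions, each handled by its own lemma: the Phase I bound $T_1 \le 1 + \tfrac{2}{\log\gamma_1}\log(\bar{\sigma}_1/\sigma_{\min})$ from the AARCQ analog of Lemma \ref{Lemma:AARC-T1} (where the Taylor remainder now picks up the additional $\tfrac{1}{2}(\kappa_e+\kappa_c)\kappa_{hs}\|\s_i\|^3$ term coming from \eqref{Hessian-Approximation}, which is exactly why $\bar{\sigma}_1$ inflates to $\max\{\sigma_0, (3\gamma_2 L_h + \gamma_2(\kappa_e+\kappa_c)\kappa_{hs})/2\}$); the Phase II bound $T_2 \le (1 + \tfrac{2}{\log\gamma_1}\log(\bar{\sigma}_2/\sigma_{\min}))\,|\mathcal{S}|$ from the analog of Lemma \ref{Lemma:AARC-T2}; the $\varsigma$-update bound from the analog of Lemma \ref{Lemma:AARC-T3}; and the new inner-loop bound $T_4$ supplied by Lemma \ref{Lemma:AARCQ-T4}. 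Adding the four terms and substituting $|\mathcal{S}| \le \lceil (C_2/\epsilon)^{1/3}\rceil + 1$ reproduces the stated expression for $k$.

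The step needing the most care is the termination clause. Algorithm \ref{Algorithm: AARCQ} possesses an additional exit test $\|\nabla f(\x_{i+1})\| < \epsilon$ absent from Algorithm \ref{Algorithm: AARC}, which means termination can occur before $C_2/l^3$ reaches $\epsilon$. In that case convexity yields $f(\x_k) - f(\x^*) \le \nabla f(\x_k)^\top(\x_k - \x^*) \le \|\nabla f(\x_k)\| \cdot D \le \epsilon D$; otherwise the convergence rate bound gives $f(\x_k) - f(\x^*) \le \epsilon$ directly. Combining the two scenarios produces the target accuracy $\max\{\epsilon, \epsilon D\}$ stated in the theorem. The "sufficiently small $\epsilon$" hypothesis is the same one needed in Lemma \ref{Lemma:AARCQ-T4}: it ensures that the argument of the logarithm in the $T_4$ bound is at least one, so that $T_4$ is actually positive and \eqref{h-inner-inq} is non-trivial. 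With these pieces in place, assembling the four iteration bounds yields the aggregate complexity, completing the proof.
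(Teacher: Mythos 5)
Your proof is correct and follows essentially the same route as the paper: specialize Theorem~\ref{Theorem:AARCQ-Main} at $\z=\x^*$, divide by $\tfrac{l(l+1)(l+2)}{6}$, bound $\varsigma_l$ via Lemma~\ref{Lemma:AARCQ-T3}, and sum the four iteration counts from Lemmas~\ref{Lemma:AARCQ-T1}, \ref{Lemma:AARCQ-T2}, \ref{Lemma:AARCQ-T3}, and \ref{Lemma:AARCQ-T4}. You are in fact slightly more explicit than the paper's own terse proof, which does not cite Lemma~\ref{Lemma:AARCQ-T4} or explain why the target accuracy is $\max\{\epsilon,\epsilon D\}$; your observation that convexity gives $f(\x_k)-f(\x^*) \le \|\nabla f(\x_k)\|\cdot D \le \epsilon D$ whenever the early stopping test $\|\nabla f(\x_{i+1})\| < \epsilon$ fires, and that the ``sufficiently small $\epsilon$'' hypothesis is exactly what Lemma~\ref{Lemma:AARCQ-T4} requires, supplies the justification the paper leaves implicit.
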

	\begin{proof}
		By Theorem~\ref{Theorem:AARCQ-Main} and taking $\z=\x^*$ we have
		{\small \begin{equation*}
			\frac{l(l+1)(l+2)}{6}f(\bar{\x}_l) \leq \frac{l(l+1)(l+2)}{6}f(\x^*) + \frac{L_h+\bar{\sigma}_1+(\kappa_{e}+ \kappa_{c} )\kappa_{hs}}{2}\left\|\x^*-\x_0\right\|^3 + \frac{2\kappa_\theta (1+\kappa_\theta) L_g^2}{\sigma_{\min}}\|\x_0-\x^*\|^2 + \frac{1}{6}\varsigma_l\left\| \x^*-\bar{\x}_1\right\|^3.
			\end{equation*}}
		\noindent Rearranging the terms, and combining with Lemmas \ref{Lemma:AARCQ-T1}, \ref{Lemma:AARCQ-T2} and \ref{Lemma:AARCQ-T3} yields the conclusions.
	\end{proof}
	
	\subsection{Strongly Convex Case}
	Next we extend the analysis to the case where the objective function is strongly convex. We further assume the level set of $f(\x)$, $\{\x\in\br^d:\, f(\x)\leq f(\x_0)\}$,
	is bounded and is contained in $\|\x-\x_*\| \le D$. We denote $\ACal_m^2(x), m\geq 1$, as the point generated by running $m$ outer loop iterations of Algorithm \ref{Algorithm: AARCQ}. Assume that $0<\kappa_\theta<\mu$, we show that the accelerated adaptive cubic regularization for Newton's method has a linear convergence rate. In particular, we can generate sequence $\{\hat{\x}_k,\; k=0, 1,2,\ldots \}$ through the following procedure:
	\begin{center}
		\begin{enumerate}
			\item  Define
			\begin{eqnarray*}
				m & = & 1+\frac{2}{\log(\gamma_1)}\log\left(\frac{\bar{\sigma}_1}{\sigma_{\min}}\right) + \left(1 + \frac{2}{\log(\gamma_1)}\log\left(\frac{\bar{\sigma}_2}{\sigma_{\min}}\right)\right)\left[2\left(\frac{\tau_1 D + \tau_2 }{\mu}\right)^{\frac{1}{3}}+1\right] \\
				& & + \left\lceil \frac{1}{\log(\gamma_3)}\log\left[\left(\frac{L_h+2\bar{\sigma}_2+2(\kappa_{e}+ \kappa_{c} )\kappa_{hs}+2\kappa_\theta L_g}{1-\kappa_\theta}\right)^{3} \frac{1}{\eta^2\varsigma_1} \right] \right\rceil \\
				& & + \left\lceil -\frac{1}{\log(\gamma_4)} \log\left[\frac{({L_g + (\kappa_{e}+ \kappa_{c} )\kappa_{hs} + \bar{\sigma}_2 })h_{0,0}}{(1-\kappa_\theta) \kappa_{hs}}\cdot\frac{1}{\epsilon}\right]  \right\rceil,
			\end{eqnarray*}
			with
			$$
			\tau_1 = 3 L_2 + 3\bar{\sigma}_1 + 3(\kappa_{e}+ \kappa_{c} )\kappa_{hs} + \left(\frac{ L_2+2\bar{\sigma}_2+2\kappa_\theta L_1}{1-\kappa_\theta}\right)^{3} \frac{1}{\eta^2}\quad \mbox{and} \quad \tau_2 = \frac{12\kappa_\theta (1+\kappa_\theta) L_1^2}{\sigma_{\min}}.
			$$
			\item Set $\hat \x_0\in\br^d$.
			\item For $k\geq 0$, iterate $\hat\x_k=\ACal_m^2(\hat \x_{k-1})$.
		\end{enumerate}
	\end{center}
	The theoretical guarantee of the above procedure can be described by the following theorem, whose proof is identical to that of Theorem \ref{Theorem:AARC-Linear-Main} and thus omitted.
	\begin{theorem} \label{Theorem:AARCQ-Linear-Main}
		Suppose the sequence $\{ \hat{\x}_k,\; k=0,1,2,\ldots \}$ is generated by the procedure above. For $k \ge O(\log(\frac{1}{\epsilon})) $ we have $f(\hat \x_k) -f(\x^*) \le \epsilon$. Specifically, the total number of iterations required to find such solution is $O\left(\sqrt[3]{\max\left\{\frac{ L_g}{\mu}, \frac{L_h}{\mu}\right\}}\log (\frac{1}{\epsilon})\right)$.
	\end{theorem}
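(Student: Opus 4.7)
The plan is to mirror the proof of Theorem~\ref{Theorem:AARC-Linear-Main}, since the only differences arise from absorbing the extra terms contributed by the inexact Hessian into the constants that enter the iteration complexity. The overall strategy is to show that a single outer call $\hat{\x}_{k+1} = \ACal_m^2(\hat{\x}_k)$ contracts the optimality gap by the constant factor $1/4$, after which linear convergence follows by straightforward telescoping.

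First I would apply Theorem~\ref{Theorem:AARCQ-Main} at the starting point $\hat{\x}_k$ of the $(k+1)$-st outer call; after its $m$ successful iterations this yields
\[
f(\hat{\x}_{k+1}) - f(\x^*) \;\leq\; \frac{1}{m^3}\bigl[\tau_1\, \|\hat{\x}_k - \x^*\|^3 + \tau_2\, \|\hat{\x}_k - \x^*\|^2\bigr],
\]
where $\tau_1$ and $\tau_2$ are the constants specified just above the statement, the inexact-Hessian correction $(\kappa_e + \kappa_c)\kappa_{hs}$ being absorbed into $\tau_1$. Using the bounded-level-set assumption $\|\hat{\x}_k - \x^*\| \leq D$, I would replace $\|\hat{\x}_k - \x^*\|^3$ by $D\,\|\hat{\x}_k - \x^*\|^2$, so that by the definition of $m$ we have $m^3 \geq 8(\tau_1 D + \tau_2)/\mu$ and hence
\[
f(\hat{\x}_{k+1}) - f(\x^*) \;\leq\; \frac{\mu}{8}\,\|\hat{\x}_k - \x^*\|^2 \;\leq\; \tfrac{1}{4}\bigl(f(\hat{\x}_k) - f(\x^*)\bigr),
\]
where the last step invokes strong convexity in the form $\|\hat{\x}_k - \x^*\|^2 \leq \tfrac{2}{\mu}(f(\hat{\x}_k) - f(\x^*))$. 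Iterating this contraction gives $f(\hat{\x}_k) - f(\x^*) \leq \epsilon$ after $k = O(\log(1/\epsilon))$ outer rounds, for a total of $m\cdot O(\log(1/\epsilon))$ inner iterations. Reading off the leading-order dependence of $m$ on the condition numbers, the cubic root comes from the $\left((\tau_1 D+\tau_2)/\mu\right)^{1/3}$ factor, with $\tau_1$ scaling like $L_h$ (through $L_h$, $\bar{\sigma}_1$, $\bar{\sigma}_2$, and $(\kappa_e+\kappa_c)\kappa_{hs}$) and $\tau_2$ like $L_g^2$, producing the claimed bound $O\bigl(\sqrt[3]{\max\{L_g/\mu,\, L_h/\mu\}}\log(1/\epsilon)\bigr)$.

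The main obstacle compared to the exact-Hessian analysis of Theorem~\ref{Theorem:AARC-Linear-Main} is purely bookkeeping: one must verify that the inexact-Hessian terms $(\kappa_e + \kappa_c)\kappa_{hs}$ scattered throughout Theorem~\ref{Theorem:AARCQ-Main} and the extra inner-loop contribution from Lemma~\ref{Lemma:AARCQ-T4} are absorbed into $\tau_1$ and $m$ without corrupting the cubic-root scaling. Since the inner-loop term from Lemma~\ref{Lemma:AARCQ-T4} enters $m$ only as an additive $O(\log(1/\epsilon))$ summand, it does not affect the leading $\sqrt[3]{\cdot}$ rate, and the proof goes through verbatim.
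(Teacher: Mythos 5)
Your proposal is correct and matches the paper's approach: the paper itself states that the proof of this theorem is identical to that of Theorem~\ref{Theorem:AARC-Linear-Main} with the inexact-Hessian corrections absorbed into the constants, which is exactly the reduction you carry out (apply Theorem~\ref{Theorem:AARCQ-Main}, replace $\|\hat{\x}_k-\x^*\|^3$ by $D\|\hat{\x}_k-\x^*\|^2$ via the level-set bound, use the definition of $m$ to obtain $m^3\ge 8(\tau_1 D+\tau_2)/\mu$, invoke strong convexity to get the $1/4$ contraction, then telescope). Your added observation that the extra $O(\log(1/\epsilon))$ inner-loop term from Lemma~\ref{Lemma:AARCQ-T4} does not disturb the $\sqrt[3]{\cdot}$ dependence on the condition numbers is the same (implicitly loose) bookkeeping the paper adopts when it treats the Lipschitz constants not coupled with $\mu$ as constants.
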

	\begin{remark}
		Theorem \ref{Theorem:AARCQ-Linear-Main} implies a surprising result that, in view of the order of iteration complexity, the accelerated adaptive cubic regularization method for Newton's method remains even with inexact Hessian estimated from the gradients. Specifically, it still has an $O\left( \sqrt[3]{\cdot} \ \right)$ dependence on the conditional numbers $\frac{L_g}{\mu}$ and $\frac{L_h}{\mu}$. However, we need to set $0<\kappa_\theta<\mu$ where $\mu$ is unknown in practice.
	\end{remark}
	
	Furthermore, we can construct sequence $\{\z_l ,\; l=1,2,\ldots \}$ such that $\z_{l+1} = \z_l + \bar{\s}_l$ and $\bar{\s}_l$ is obtained by running \textsf{SAS} of Algorithm \ref{Algorithm: AARCQ} with initial point $\z_l$. Recall that $\sigma_{\min} \ge \kappa_e \kappa_{hs}$, and so
	\begin{eqnarray}
	f(\z_l) - f(\z_{l+1}) & \geq & f(\z_l)  - m(\z_l, \bar{\s}_l, \bar{\sigma}_l) \nonumber \\
	& = & -\bar{\s}_l^\top \nabla f(\z_l) - \frac{1}{2} \bar{\s}_l^\top H(\z_l) \bar{\s}_l - \frac{\bar{\sigma}_l}{3}\left\| \bar{\s}_l\right\|^3  \nonumber \\
	& = & -\bar{\s}_l^\top\nabla m(\z_l, \bar{\s}_l, \bar{\sigma}_l) + \frac{1}{2} \bar{\s}_l^\top H(\z_l)\bar{s}_l + \frac{2 \bar{\sigma}_l}{3}\left\| \bar{\s}_l \right\|^3  \nonumber \\
	& {\eqref{Eqn:Approx_Subprob} \above 0pt \geq} & \frac{1}{2} \bar{\s}_l^\top H(\z_l) \bar{\s}_l - \kappa_\theta \left\|\bar{\s}_l \right\|^2 + \frac{2{\sigma}_{\min}}{3}\left\| \bar{\s}_l \right\|^3  \nonumber \\
	& \geq & \frac{1}{2} \bar{\s}_l^\top \nabla^2 f(\z_l) \bar{\s}_l - \kappa_\theta \left\|\bar{\s}_l \right\|^2 + \frac{1}{2} \bar{\s}_l^\top (H(\z_l) - \nabla^2 f(\z_l)) \bar{\s}_l + \frac{2{\sigma}_{\min}}{3}\left\| \bar{\s}_l\right\|^3  \nonumber \\
	& {\eqref{Strongly-Convex-1}, \eqref{Hessian-Approximation} \above 0pt  \geq}  & \frac{\mu - 2\kappa_\theta}{2}\left\| \bar{\s}_l\right\|^2 \nonumber + \left(\frac{2\sigma_{\min}}{3}-\frac{\kappa_e \kappa_{hs}}{2}\right)\left\|\bar{\s}_l\right\|^3 \\
	& {\text{Lemma}\; \ref{Lemma:AARCQ-T3-P} \above 0pt  \geq} & \frac{(\mu - 2\kappa_\theta)(1-\kappa_\theta)}{2(\frac{ L_h}{2}+\bar{\sigma}_2+(\kappa_{e}+ \kappa_{c} )\kappa_{hs}+\kappa_\theta L_g)}\left\| \nabla f(\z_{l+1}) \right\| \nonumber \\
	& { \eqref{Strongly-Convex-2} \above 0pt  \geq} & \frac{(\mu - 2\kappa_\theta)(1-\kappa_\theta)}{2(\frac{L_h}{2}+\bar{\sigma}_2+(\kappa_{e}+ \kappa_{c} )\kappa_{hs}+\kappa_\theta L_g)} \sqrt{2 \mu( f(\z_{l+1})-f(\x^*))},  \nonumber
	\end{eqnarray}
	where $\kappa_\theta\in(0,1)$ is defined in Condition \ref{Cond:Approx_Subprob}. Hence, we have
	\begin{equation*}
	f(\z_{l+1}) - f(\x^*) \leq \frac{2(\frac{L_h}{2}+\bar{\sigma}_2+(\kappa_{e}+ \kappa_{c} )\kappa_{hs}+\kappa_\theta L_g)^2}{\mu(\mu - 2\kappa_\theta)^2(1-\kappa_\theta)^2} \left(f(\z_l) - f(\z_{l+1})\right)^2 \leq \frac{2(\frac{L_h}{2}+(\kappa_{e}+ \kappa_{c} )\kappa_{hs}+\bar{\sigma}_2+\kappa_\theta L_g)^2}{\mu(\mu - 2\kappa_\theta)^2(1-\kappa_\theta)^2} \left(f(\z_l) - f(\x^*)\right)^2,
	\end{equation*}
	and the region of quadratic convergence is given by
	\begin{equation*}
	\QCal = \left\{\z\in\br^d: f(\z)-f(\x^*)\leq\frac{\mu(\mu - 2\kappa_\theta)^2(1-\kappa_\theta)^2}{2(\frac{L_h}{2}+(\kappa_{e}+ \kappa_{c} )\kappa_{hs}+\bar{\sigma}_2+\kappa_\theta L_g)^2}\right\} .
	\end{equation*}
	
	\section{Accelerated Adaptive Gradient Method} \label{Section:AAGD}
	In this section, we present an accelerated adaptive gradient method that is \textit{fully} Lipschitz-constant-free. In particular, we consider the following standard approximation of $f$ evaluated at $\x_i$ with quadratic regularization:
	\begin{equation}\label{prob:AAGD}
	m(\x_i, \s,\sigma_i) = f(\x_i) + s^\top \nabla f(\x_i) + \frac{1}{2}\sigma_i\left\|\s\right\|^2,
	\end{equation}
	where $\sigma_i>0$ is a regularized parameter. Then our algorithms are described in Algorithm \ref{Algorithm: AAGD}.
	\begin{algorithm}[!h]
		\begin{algorithmic} \scriptsize
			\STATE Given $\gamma_2>\gamma_1>1$, $\gamma_3 >1$, $\eta>0$,  and $\sigma_{\min}>0$. Choose $x_0\in\br^d$, $\sigma_0 \ge \sigma_{\min}$, and $\varsigma_1>0$.
			\STATE \textbf{Begin Phase I:} \textsf{Simple Adaptive Subroutine (SAS)}
			\FOR{$i=0,1,2,\ldots$}
			\STATE Compute $\s_i\in\br^d$ such that $\s_i=\argmin_{\s\in\br^d} m(\x_i, \s, \sigma_i)$;
			\STATE Compute $\rho_i=f(\x_i+\s_i)-m(\x_i, \s_i, \sigma_i)$;
			\IF{$\rho_i<0$ [successful iteration] }
			\STATE $\x_{i+1}=\x_i+\s_i$ and $\sigma_{i+1}\in\left[\sigma_{\min}, \sigma_i\right]$;
			\STATE $\textbf{break}$.
			\STATE Record the total number of iterations in \textsf{SAA}: $T_1 = i+1$;
			\ELSE
			\STATE $\x_{i+1}=\x_i$ and $\sigma_{i+1}\in\left[\gamma_1\sigma_i, \gamma_2\sigma_i\right]$;
			\ENDIF
			\ENDFOR
			\STATE \textbf{End Phase I:} \textsf{Simple Adaptive Subroutine}
			
			\vspace{0.3 cm}
			
			\STATE \textbf{Begin Phase II:} \textsf{Accelerated Adaptive Subroutine (AAS)}\\
			Set the count of successful iterations $l=1$ and let $\bar{\x}_1 = \x_{T_1}$;\\
			Construct $\psi_1(\z) = f(\bar{\x}_1) + \frac{1}{4}\varsigma_1\|\z-\bar{\x}_1\|^2$, and let $\z_1= \argmin_{\z\in \br^d} \psi_1(\z)$, and choose $\y_1=\frac{1}{3}\bar{\x}_1 + \frac{2}{3}\z_1$; \\
			\FOR{$j=0,1,2\ldots$}
			\STATE Compute $\s_{T_1+j} = \argmin_{\s\in\br^d} m(\y_l, \s, \sigma_{T_1+j})$, and $\rho_{T_1+j}=-\frac{\s_{T_1+j}^\top\nabla f(\y_l+\s_{T_1+j})}{\left\|\s_{T_1+j}\right\|^2}$;
			\IF{$\rho_{T_1+j}\geq\eta$ [successful iteration]}
			\STATE $\x_{T_1+j+1}=\y_l+\s_{T_1+j}$, $\sigma_{T_1+j+1}\in\left[\sigma_{\min}, \sigma_{T_1+j}\right]$;
			\STATE Set $l=l+1$ and $\varsigma=\varsigma_{l-1}$;
			\STATE Update $\psi_l(\z)$ as illustrated in Table \ref{tab:auxiliary} by using $\varsigma_l=\varsigma$, and compute $\z_l = \argmin_{\z\in\br^d} \ \psi_l(\z)$;
			\WHILE{$\psi_l(\z_l) \geq \frac{l(l+1)}{2} f(\bar{\x}_l) $}
			\STATE Set $\varsigma = \gamma_3\varsigma$, and $\psi_l(\z)=\psi_{l-1}(\z) + \frac{l(l+1)}{2}\left[f(\x_{T_1+j+1})+\left(\z - \x_{T_1+j+1}\right)^\top\nabla f(\x_{T_1+j+1})\right]+ \frac{1}{4}\left(\varsigma-\varsigma_{l-1}\right)\|\z -\bar{\x}_1\|^2$;
			\STATE Compute $\z_l=\argmin_{\z\in\br^d} \ \psi_l(\z)$;
			\ENDWHILE
			\STATE $\varsigma_l=\varsigma$;
			\STATE Let $\bar{\x}_l = \x_{T_1+j+1} $, $\y_l=\frac{l}{l+2}\bar{\x}_l + \frac{2}{l+2}\z_l$.
			\ELSE
			\STATE $\x_{T_1+j+1}=\x_{T_1+j}$, $\sigma_{T_1+j+1}\in\left[\gamma_1\sigma_{T_1+j}, \gamma_2\sigma_{T_1+j}\right]$;
			\ENDIF
			\ENDFOR
			\STATE Record the total number of iterations of \textsf{AAS}: $T_2 = j+1$.
			\STATE \textbf{End Phase II:} \textsf{Accelerated Adaptive Subroutine}
		\end{algorithmic} \caption{Accelerated Gradient Method with Adaptive Quadratic Regularization}\label{Algorithm: AAGD}
	\end{algorithm}
	
	
	Different from the accelerated adaptive cubic regularization for Newton's method with exact/inexact Hessian, the subproblem in each iteration of Algorithm \ref{Algorithm: AAGD}:
	\begin{equation*}
	\s_i=\argmin_{\s\in\br^d} \ m(\x_i,\s,\sigma_i)=-\frac{1}{\sigma_i}\nabla f(\x_i) 
	\end{equation*}
	where $m(\x_i,\s,\sigma_i)$ is defined in \eqref{prob:AAGD}. Similarly, accoring to Table \ref{tab:auxiliary}, the subproblem
	\begin{equation*}
	\z_l=\argmin_{\z\in\br^d} \ \psi_l(\z) = \ell_l(\z) + \frac{1}{4}\varsigma_l \|\z-\bar{\x}_1\|^2, \quad l = 1,2,\ldots,,
	\end{equation*}
	for the acceleration admits a closed-form solution as well, where
	$\ell_{l}(\z)$ is a certain linear function of $\z$.
	Inparticular, by letting
	\begin{equation*}
	\nabla \psi_l(\z) = \nabla\ell_l(\z) +  \frac{1}{2}\varsigma_l(\z-\bar{\x}_1) = 0,
	\end{equation*}
	and using the fact that $\nabla\ell_l(\z)$ is independent of $\z$, we have
	\begin{equation*}
	\z_l=\bar{\x}_1-\frac{2}{\varsigma_l}\nabla\ell_l(\z).
	\end{equation*}
	
	\subsection{The Convex Case}
	In this subsection, we aim to analyze the theoretical performance of  Algorithm \ref{Algorithm: AAGD}. The proof sketch is similar to that of Algorithm \ref{Algorithm: AARC}. Thus, we shall move the details to the appendix, and only present two main results here.
	Recall that $l= 1,2,\ldots$ is the count of successful iterations, and the sequence $\{ \bar{\x}_l, \ l=1,2,\ldots \}$ is updated when a successful iteration is identified. The iteration complexity result is presented in Theorem \ref{Theorem:AAGD-Main} and Theorem \ref{Thm:AAGD-Main}.
	\begin{theorem}\label{Theorem:AAGD-Main}
		The sequence $\{\bar{x}_l, \ l=1,2,\ldots\}$ generated by Algorithm \ref{Algorithm: AAGD} satisfies
		\begin{equation*}
		\frac{l(l+1)}{2}f(\bar{\x}_l) \leq \psi_l(\z_l) \leq \psi_l(\z) \leq \frac{l(l+1)}{2}f(\z) + \frac{L_g+\bar{\sigma}_1} {2}\left\|\z-\x_0\right\|^2 + \frac{1}{4}\varsigma_l\left\|\z-\bar{\x}_1\right\|^2,
		\end{equation*}
		where
		\begin{equation*}
		\bar{\sigma}_1= \max\left\{\sigma_0, \gamma_2 L_g\right\} > 0.
		\end{equation*}
	\end{theorem}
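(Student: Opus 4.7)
The plan is to proceed by induction on $l$, mirroring the structure of the proof of Theorem~\ref{Theorem:AARC-Main} for the cubic regularized case. Two separate chains of inequalities need to be established at each step: the inner bound $\frac{l(l+1)}{2}f(\bar{\x}_l)\le\psi_l(\z_l)$, which will come from the while-loop termination criterion in \textsf{AAS}, and the upper bound on $\psi_l(\z)$, which is the real content and will be propagated by induction via the update rule in Table~\ref{tab:auxiliary}.

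First I would dispose of the base case $l=1$ in a separate auxiliary proposition (analogous to Theorem~\ref{Theorem:AARC-Main-Prep}, though much cleaner here because the quadratic subproblem is solved exactly rather than approximately, so there is no need to control the discrepancy $\|\s_{T_1-1}-\s^m_{T_1-1}\|$). The argument runs as follows. Since iteration $T_1-1$ is successful in \textsf{SAS}, we have
\begin{equation*}
f(\bar{\x}_1)=f(\x_{T_1})\le m(\x_{T_1-1},\s_{T_1-1},\sigma_{T_1-1}).
\end{equation*}
Because $\s_{T_1-1}$ is the exact minimizer of the strongly convex quadratic $m(\x_{T_1-1},\cdot,\sigma_{T_1-1})$, we may upper-bound its value by plugging in $\s=\z-\x_{T_1-1}=\z-\x_0$, obtaining
\begin{equation*}
m(\x_{T_1-1},\s_{T_1-1},\sigma_{T_1-1})\le f(\x_0)+(\z-\x_0)^\top\nabla f(\x_0)+\tfrac{\sigma_{T_1-1}}{2}\|\z-\x_0\|^2\le f(\z)+\tfrac{\bar{\sigma}_1}{2}\|\z-\x_0\|^2,
\end{equation*}
using convexity of $f$ and the bound $\sigma_{T_1-1}\le\bar{\sigma}_1$ that arises just as in Lemma~\ref{Lemma:AARC-T1} (indeed, the descent inequality $f(\x+\s)\le f(\x)+\s^\top\nabla f(\x)+\tfrac{L_g}{2}\|\s\|^2$ forces a successful iteration whenever $\sigma_i\ge L_g$, so $\sigma_{T_1-1}\le\gamma_2 L_g$). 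Adding $\tfrac14\varsigma_1\|\z-\bar{\x}_1\|^2$ and noting $\tfrac{\bar{\sigma}_1}{2}\le\tfrac{L_g+\bar{\sigma}_1}{2}$ yields the base case.

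For the inductive step, assume the three inequalities hold at $l$. By the update rule in Table~\ref{tab:auxiliary},
\begin{equation*}
\psi_{l+1}(\z)=\psi_l(\z)+(l+1)\bigl[f(\bar{\x}_l)+(\z-\bar{\x}_l)^\top\nabla f(\bar{\x}_l)\bigr]+\tfrac{1}{4}(\varsigma_{l+1}-\varsigma_l)\|\z-\bar{\x}_1\|^2.
\end{equation*}
Applying the inductive hypothesis to the first term, and convexity of $f$ to bound $(l+1)[f(\bar{\x}_l)+(\z-\bar{\x}_l)^\top\nabla f(\bar{\x}_l)]\le(l+1)f(\z)$, I combine the coefficients $\tfrac{l(l+1)}{2}+(l+1)=\tfrac{(l+1)(l+2)}{2}$ and collapse the $\varsigma$ terms to $\tfrac14\varsigma_{l+1}\|\z-\bar{\x}_1\|^2$. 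This gives the desired upper bound at $l+1$. The middle inequality $\psi_{l+1}(\z_{l+1})\le\psi_{l+1}(\z)$ is definitional, and the left-hand inequality $\tfrac{(l+1)(l+2)}{2}f(\bar{\x}_{l+1})\le\psi_{l+1}(\z_{l+1})$ is guaranteed by the while loop in \textsf{AAS}, which keeps inflating $\varsigma$ via $\varsigma\gets\gamma_3\varsigma$ precisely until this condition is met (and Lemma~\ref{Lemma:AARC-T3} adapted to the quadratic setting ensures this loop terminates).

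I do not expect a genuine obstacle here; the only minor subtlety is book-keeping the two distinct regularization sequences $\{\sigma_i\}$ and $\{\varsigma_l\}$, making sure the monotonicity $\varsigma_{l+1}\ge\varsigma_l$ (forced by the fact that we only ever multiply $\varsigma$ by $\gamma_3>1$) is invoked when collapsing the quadratic terms. The rest is routine algebra analogous to the cubic case, simplified by the exact solvability of the quadratic subproblem.
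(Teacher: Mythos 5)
Your proposal is correct and follows essentially the same route as the paper: induction with the base case handled via the successful-iteration criterion in \textsf{SAS} plus exactness of the quadratic minimizer (the paper's Theorem~\ref{Theorem:AAGD-Main-Prep}), and the inductive step via the update rule, the inductive hypothesis, and convexity of $f$, with the left-hand inequality supplied by the $\varsigma$-inflation loop. Your base case is in fact marginally cleaner, since you bound the linearization directly by $f(\z)$ via convexity and absorb the slack into the stated constant $\frac{L_g+\bar{\sigma}_1}{2}$, whereas the paper carries an extra $\frac{L_g}{2}\|\z-\x_0\|^2$ term; the end result is identical.
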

	\begin{proof}
		As before, the proof is based on mathematical induction. The base case of $l=1$ is precisely the result of Theorem \ref{Theorem:AAGD-Main-Prep}. Suppose that the theorem is true for some $l\geq 1$. Let us consider the case of $l+1$:
		\begin{eqnarray*}
			\psi_{l+1}(\z_{l+1})  & \le & \psi_{l+1}(\z) \\
			& \leq & \frac{l(l+1)}{2}f(\z) + \frac{L_g+\bar{\sigma}_1 } {2}\left\|\z-\x_0\right\|^2 + \frac{1}{4}\varsigma_l\left\|\z-\bar{\x}_1\right\|^2 \\
			& & + (l+1)\left[ f(\bar{\x}_l) + \left(\z-\bar{\x}_l\right)^\top\nabla f(\bar{\x}_l)\right]  + \frac{1}{4}\left(\varsigma_{l+1}-\varsigma_{l}\right) \left\|\z-\bar{\x}_1\right\|^2 \\
			& \le & \frac{(l+1)(l+2)}{2}f(\z) + \frac{L_g + \bar{\sigma}_1} {2}\left\|\z-\x_0\right\|^2 + \frac{1}{4}\varsigma_{l+1}\left\|\z-\bar{\x}_1\right\|^2 ,
		\end{eqnarray*}
		where the last inequality is due to the convexity of $f(\z)$. On the other hand, it follows from the way that $\psi_{l+1}(\z)$ is updated that $\frac{(l+1)(l+2)}{2}f(\bar{\x}_{l+1}) \leq \psi_{l+1}(\z_{l+1})$, and thus Theorem~\ref{Theorem:AAGD-Main} is proven.
	\end{proof}
	Now Theorem~\ref{Theorem:AAGD-Main} leads to the following main result on iteration complexity of Algorithm \ref{Algorithm: AAGD}.
	\begin{theorem}\label{Thm:AAGD-Main}
		The sequence $\{\bar{\x}_l, \ l=1,2,\ldots\}$ generated by Algorithm \ref{Algorithm: AAGD} satisfies that
		\begin{equation*}
		f(\bar{\x}_l)-f(\x^*) \leq \frac{C_3}{l(l+1)} \leq \frac{C_3}{l^2},
		\end{equation*}
		where
		\begin{equation*}
		C_3 = \left( L_g+\bar{\sigma}_1\right)\|\x_0-\x^*\|^2 +  2\left(L_g+\bar{\sigma}_2\right)^{2} \|\x_1 - \x^*\|^2.
		\end{equation*}
		The total iteration number required to reach $\bar{\x}_k$ satisfying $f(\bar{\x}_k)-f(\x^*)\leq\epsilon$ is bounded as follows: 
		\begin{equation*}
		k\leq 1+\frac{2}{\log(\gamma_1)}\log\left(\frac{\bar{\sigma}_1}{\sigma_{\min}}\right) + \left(1+\frac{2}{\log(\gamma_1)}\log\left(\frac{\bar{\sigma}_2}{\sigma_{\min}}\right)\right)\left[\left(\frac{C_3}{\epsilon}\right)^{\frac{1}{2}}+1\right] +\left\lceil \frac{1}{\log(\gamma_3)}\log\left[ \left(L_g+\bar{\sigma}_2\right)^{2}\frac{4}{\eta \, \varsigma_1} \right] \right\rceil,
		\end{equation*}
		\noindent where
		\begin{equation*}
		\bar{\sigma}_2 = \max\left\{\bar{\sigma}_1, \gamma_2 L_g+\gamma_2\eta\right\}>0.
		\end{equation*}
	\end{theorem}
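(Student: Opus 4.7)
The plan is to mirror the proof of Theorem~\ref{Thm:AARC-Main}, with quadratic estimates replacing cubic ones. The starting point is to substitute $\z = \x^*$ into the three-term inequality of Theorem~\ref{Theorem:AAGD-Main}, which after rearrangement yields
\begin{equation*}
\frac{l(l+1)}{2}\bigl(f(\bar{\x}_l) - f(\x^*)\bigr) \le \frac{L_g + \bar{\sigma}_1}{2}\|\x_0 - \x^*\|^2 + \frac{1}{4}\varsigma_l\|\bar{\x}_1 - \x^*\|^2.
\end{equation*}
To turn this into the claimed $C_3/(l(l+1))$ bound I need an explicit cap on $\varsigma_l$, coming from an analogue of Lemma~\ref{Lemma:AARC-T3} adapted to the quadratic regularizer. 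The induction step in that analogue uses (i) a quadratic version of Lemma~\ref{Lemma:General-Second-Order}, namely $\s^\top \g + \tfrac{1}{2}\sigma\|\s\|^2 \ge -\|\g\|^2/(2\sigma)$; (ii) the \textsf{AAS} success criterion $-\s^\top \nabla f(\y_l + \s) \ge \eta\|\s\|^2$; and (iii) a quadratic analogue of Lemma~\ref{Lemma:AARC-T3-P2}, namely $\|\nabla f(\x_{j+1})\| \le (L_g + \bar{\sigma}_2)\|\s_j\|$, obtained from the Lipschitz gradient bound together with the closed-form $\nabla_\s m = \nabla f(\y_l) + \sigma_{T_1+j}\s$. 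Chaining these yields $\psi_l(\z_l) \ge \tfrac{l(l+1)}{2}f(\bar{\x}_l)$ whenever $\varsigma_l \ge 4(L_g + \bar{\sigma}_2)^2/\eta$; hence the while-loop terminates with this constant upper bound on $\varsigma_l$.

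Plugging this $\varsigma_l$-bound into the displayed inequality gives $f(\bar{\x}_l) - f(\x^*) \le C_3/(l(l+1))$ with the stated $C_3$. In particular, if $f(\bar{\x}_k) - f(\x^*) > \epsilon$ then the number of successful \textsf{AAS}-iterations satisfies $|\SCal| \le \sqrt{C_3/\epsilon}$, so at most $\sqrt{C_3/\epsilon} + 1$ successful steps are needed.

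For the total iteration count I combine three ingredients that parallel the cubic case. First, a Phase~I analogue of Lemma~\ref{Lemma:AARC-T1}: since $\sigma_i \ge L_g$ forces $f(\x_i + \s_i) \le m(\x_i,\s_i,\sigma_i)$ via descent lemma, the regularization parameter can at most double past $L_g$, giving $T_1 \le 1 + \tfrac{2}{\log(\gamma_1)}\log(\bar{\sigma}_1/\sigma_{\min})$ with $\bar{\sigma}_1 = \max\{\sigma_0, \gamma_2 L_g\}$. Second, a Phase~II analogue of Lemma~\ref{Lemma:AARC-T2}: arguing as before but with $\rho_{T_1+j} = -\s^\top \nabla f(\y_l + \s)/\|\s\|^2$, one shows $\sigma_{T_1+j} \ge L_g + \eta$ forces success, so $\sigma_{T_1+j} \le \bar{\sigma}_2 := \max\{\bar{\sigma}_1, \gamma_2 L_g + \gamma_2 \eta\}$ at successful steps and hence $T_2 \le (1 + \tfrac{2}{\log(\gamma_1)}\log(\bar{\sigma}_2/\sigma_{\min}))|\SCal|$. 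Third, counting $\varsigma$-updates through the threshold above gives $T_3 \le \lceil \log[4(L_g+\bar{\sigma}_2)^2/(\eta\varsigma_1)]/\log(\gamma_3)\rceil$. Summing $T_1$, $T_2$, and $T_3$ with the $|\SCal|$-bound gives the stated iteration count.

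The main obstacle is the quadratic version of Lemma~\ref{Lemma:AARC-T3}: unlike the cubic analysis, where Lemma~\ref{Lemma:General-Second-Order} produced a neat $\|\g\|^{3/2}/\sqrt{\sigma}$ lower bound matched by a cubic upper estimate on $\|\nabla f(\bar{\x}_l)\|$ in terms of $\|\s\|$, the quadratic induction step must carefully pair the $\|\g\|^2/(2\sigma)$ term with the linearized increment $\tfrac{l(l+1)}{2}(\z - \z_{l-1})^\top \nabla f(\bar{\x}_l)$ and the step-quality bound $\eta\|\s\|^2$, so the $\alpha_l = l/(l+2)$ update rule (rather than $l/(l+3)$) becomes essential. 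Once this induction is verified, the rest of the argument is routine bookkeeping identical in spirit to Section~\ref{Section:AARC}, and is presumably what the appendix carries out in detail.
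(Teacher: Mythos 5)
Your proposal matches the paper's proof exactly: the paper substitutes $\z=\x^*$ into Theorem~\ref{Theorem:AAGD-Main}, rearranges, and then invokes the quadratic analogues you reconstruct — Lemma~\ref{Lemma:AAGD-T1} for $T_1$, Lemma~\ref{Lemma:AAGD-T2} for $T_2$, and Lemma~\ref{Lemma:AAGD-T3} (which in turn relies on Lemma~\ref{Lemma:General-First-Order} and Lemma~\ref{Lemma:AAGD-T3-P2}, precisely the quadratic versions of the cubic-case auxiliary lemmas you identify) for the $\varsigma_l$ cap and $T_3$. Your threshold $\varsigma_l \ge 4(L_g+\bar{\sigma}_2)^2/\eta$ is the same as the paper's $(2L_g+2\bar{\sigma}_2)^2/\eta$, so the argument and constants all agree.
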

	\begin{proof}
		By Theorem~\ref{Theorem:AAGD-Main} and taking $\z=\x^*$ we have
		\begin{equation*}
		\frac{l(l+1)}{2}f(\bar{\x}_l) \leq \frac{l(l+1)}{2}f(\x^*) + \frac{L_g+\bar{\sigma}_1}{2}\left\|\x^*-\x_0\right\|^2 + \frac{1}{4}\varsigma_l\left\| \x^*-\bar{\x}_1\right\|^2.
		\end{equation*}
		Rearranging the terms, and combining with Lemmas \ref{Lemma:AAGD-T1}, \ref{Lemma:AAGD-T2} and \ref{Lemma:AAGD-T3} yields the conclusions.
	\end{proof}
	
	\subsection{Strongly Convex Case}
	Next we extend the analysis to the case where the objective function is strongly convex. We denote $\ACal_m^3(\x), m\geq 1$, as the point generated by running $m$ outer loop iterations of Algorithm \ref{Algorithm: AAGD}. In particular, we can generate sequence $\{\hat{\x}_k,\; k=0, 1,2,\ldots \}$ through the following procedure:
	
	\begin{center}
		\begin{enumerate}
			\item Define
			\begin{eqnarray*}
				m & = & 1+\frac{2}{\log(\gamma_1)}\log\left(\frac{\bar{\sigma}_1}{\sigma_{\min}}\right) + \left(1+ \frac{2}{\log(\gamma_1)}\log\left(\frac{\bar{\sigma}_2}{\sigma_{\min}}\right)\right)\left[2\left(\frac{ L_1 + \bar{\sigma}_1 + 2\left( L_g+\bar{\sigma}_2\right)^{2}  }{\mu}\right)^{\frac{1}{2}}+1\right] \\
				& & +\left\lceil \frac{1}{\log(\gamma_3)}\log\left[ \left(L_g+\bar{\sigma}_2\right)^{2}\frac{4}{\eta \, \varsigma_1} \right] \right\rceil.
			\end{eqnarray*}
			\item Set $\hat \x_0\in\br^d$.
			\item For $k\geq 0$, iterate $\hat \x_k=\ACal_m^3(\hat \x_{k-1})$.
		\end{enumerate}
	\end{center}
	The linear convergence of the sequence $\{ \hat{\x}_k,\; k=0,1,2,\ldots \}$ is presented in the following theorem.
	\begin{theorem} \label{Theorem:AAGD-Linear-Main}
		Suppose the sequence $\{ \hat{\x}_k,\; k=0,1,2,\ldots \}$ is generated by the procedure above. For $k \ge O(\log(\frac{1}{\epsilon})) $ we have $f(\hat \x_k) -f(\x^*) \le \epsilon$. Specifically, the total number of iterations required to find such solution is $O\left(\sqrt{\frac{L_g}{\mu}}\log(\frac{1}{\epsilon})\right)$.
	\end{theorem}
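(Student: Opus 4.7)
The plan is to mirror the linear-convergence argument of Theorem~\ref{Theorem:AARC-Linear-Main} in the gradient setting, replacing the cubic rate $O(1/l^3)$ from Theorem~\ref{Thm:AARC-Main} by the quadratic rate $O(1/l^2)$ from Theorem~\ref{Thm:AAGD-Main}, and the corresponding cube-root dependence in $m$ by a square-root dependence.

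First, I would apply Theorem~\ref{Thm:AAGD-Main} to a single outer call $\hat{\x}_k = \ACal_m^3(\hat{\x}_{k-1})$, treating $\hat{\x}_{k-1}$ as the initial iterate $\x_0$. This yields
\[
f(\hat{\x}_k) - f(\x^*) \le \frac{C_3}{m^2},\quad C_3 = (L_g + \bar{\sigma}_1)\|\hat{\x}_{k-1} - \x^*\|^2 + 2(L_g + \bar{\sigma}_2)^2\|\bar{\x}_1 - \x^*\|^2,
\]
where $\bar{\x}_1$ denotes the \textsf{SAS} output started from $\hat{\x}_{k-1}$. Because both \textsf{SAS} and the subsequent \textsf{AAS} steps are descent steps in $f$, every iterate stays in the level set $\{\x : f(\x) \le f(\hat{\x}_0)\}$, so $\|\hat{\x}_{k-1} - \x^*\| \le D$ and $\|\bar{\x}_1 - \x^*\| \le D$. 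The value of $m$ specified in the procedure is chosen precisely so that, after invoking $\|\bar{\x}_1-\x^*\|\le D$, one obtains $m^2 \ge 4 C_3/(\mu\|\hat{\x}_{k-1}-\x^*\|^2)$, hence $C_3/m^2 \le (\mu/4)\|\hat{\x}_{k-1}-\x^*\|^2$.

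Second, invoking the strong-convexity lower bound $\tfrac{\mu}{2}\|\hat{\x}_{k-1}-\x^*\|^2 \le f(\hat{\x}_{k-1})-f(\x^*)$ yields the linear contraction
\[
f(\hat{\x}_k) - f(\x^*) \le \tfrac{1}{2}\bigl(f(\hat{\x}_{k-1})-f(\x^*)\bigr),
\]
so $k = O(\log(1/\epsilon))$ outer calls suffice to reach $f(\hat{\x}_k)-f(\x^*)\le\epsilon$. Each call costs $m = O\bigl(\sqrt{L_g/\mu}\bigr)$ inner iterations (treating Lipschitz parameters not coupled with $\mu$ as constants, in keeping with the convention used in Theorem~\ref{Theorem:AARC-Linear-Main}), yielding the overall count $O\bigl(\sqrt{L_g/\mu}\,\log(1/\epsilon)\bigr)$.

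The main bookkeeping step is the passage from $C_3$ to a multiple of $\|\hat{\x}_{k-1}-\x^*\|^2$: the term $2(L_g+\bar{\sigma}_2)^2\|\bar{\x}_1-\x^*\|^2$ is controlled only through the level-set diameter $D$, which is why $D$ appears inside the square root in the definition of $m$. The remaining ingredients — the sublinear bound of Theorem~\ref{Thm:AAGD-Main}, the descent property of \textsf{SAS} and \textsf{AAS}, and the geometric accumulation of contractions — are routine and follow the template of Theorem~\ref{Theorem:AARC-Linear-Main} verbatim, which is why the paper justifiably declares the proof identical and omits it.
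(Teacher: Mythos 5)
Your high-level route (apply Theorem~\ref{Thm:AAGD-Main} to a single outer call, deduce a contraction by $\tfrac12$, multiply the per-call budget $m$ by $O(\log(1/\epsilon))$ outer calls) is indeed the same skeleton the paper has in mind, but the key bookkeeping step you describe does not close, and it misdescribes the definition of $m$.

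First, you assert that $D$ ``appears inside the square root in the definition of $m$.'' It does not. For Algorithm~\ref{Algorithm: AAGD} the paper sets $m$ using the quantity $2\bigl((L_g + \bar{\sigma}_1 + 2(L_g+\bar{\sigma}_2)^2)/\mu\bigr)^{1/2}$, with no $D$. The level-set diameter $D$ enters only in the cubic cases (Theorems~\ref{Theorem:AARC-Linear-Main} and \ref{Theorem:AARCQ-Linear-Main}) because there $C_1$ mixes a $\|\cdot\|^3$ term with a $\|\cdot\|^2$ term: after factoring out $\|\hat{\x}_k-\x^*\|^2$ one is left with a residual $\|\hat{\x}_k-\x^*\|\le D$. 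In the gradient case all of $C_3$ is homogeneous of degree two in the distances, so no residual length scale survives, and $m$ is a constant independent of $D$.

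Second, and more importantly, controlling the cross term $2(L_g+\bar{\sigma}_2)^2\|\bar{\x}_1-\x^*\|^2$ merely by $\|\bar{\x}_1-\x^*\|\le D$ does not give the required bound $C_3/m^2\le(\mu/4)\|\hat{\x}_{k-1}-\x^*\|^2$ with a \emph{fixed} $m$. With your bound, $4C_3/(\mu\|\hat{\x}_{k-1}-\x^*\|^2)$ contains the factor $D^2/\|\hat{\x}_{k-1}-\x^*\|^2$, which is unbounded because the outer iterates satisfy $\|\hat{\x}_{k-1}-\x^*\|\to 0$. So the inequality ``$m^2\ge 4C_3/(\mu\|\hat{\x}_{k-1}-\x^*\|^2)$'' would eventually fail for any constant $m$. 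The paper's argument (read off from the parallel proof of Theorem~\ref{Theorem:AARC-Linear-Main}) instead compares $\|\bar{\x}_1-\x^*\|$ with the \emph{current} distance $\|\hat{\x}_{k-1}-\x^*\|$, effectively replacing the $\bar{\x}_1$-term by a term in $\|\hat{\x}_{k-1}-\x^*\|^2$; then $C_3$ becomes a fixed constant times $\|\hat{\x}_{k-1}-\x^*\|^2$, and the constant $m$ (no $D$) gives $C_3/m^2\le(\mu/4)\|\hat{\x}_{k-1}-\x^*\|^2$ uniformly in $k$. If one wants to make this substitution rigorous, the right ingredient is that \textsf{SAS} is a descent step, whence $f(\bar{\x}_1)\le f(\hat{\x}_{k-1})$ and strong convexity controls $\|\bar{\x}_1-\x^*\|$ in terms of $\|\hat{\x}_{k-1}-\x^*\|$ (up to a $\sqrt{L_g/\mu}$ factor); appealing to the absolute bound $D$ is the wrong tool here.
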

	\begin{proof}
		Because
		\begin{equation*}
		f(\x_{k+1}) - f(\x^*) \leq \frac{\mu}{4}\left\| \x_k - \x^*\right\|^2 \leq \frac{1}{2}\left(f(\x_k) - f(\x^*)\right),
		\end{equation*}
		the total number of iterations to find an $\epsilon$-solution is $O\left(\sqrt{\frac{L_g}{\mu}}\log(\frac{1}{\epsilon})\right)$.
	\end{proof}

	\section{Numerical Experiments}\label{Section5:Experiment}
	
	In this section, we implement a variant of Algorithm \ref{Algorithm: AARC}, referred to as {\it Adaptively Accelerated \& Cubic Regularized}\/ (AARC) Newton's method. In this variant we first run Algorithm \ref{Algorithm: AARC}. After $10$ successful iterations of {\it Accelerated Adaptive Subroutine}\/ are performed, we check the progress made by each iteration. In particular, when $\frac{\left| f(x^{k+1})-f(x^k) \right|}{\left| f(x^k)\right|} \leq 0.1$, which indicates that it is getting close to the global optimum, we switch to the adaptive cubic regularization phase of Newton's method (ARC) in \cite{Cartis-2011-Adaptive-I, Cartis-2011-Adaptive-II} with stopping criterion $\left\|\nabla f(x)\right\|\leq 10^{-9}$.
	In the implementation, we apply the so-called Lanczos process to approximately solve the subproblem $\min_{\s \in \br^d} m(\x_i,\s,\sigma_i)$. In addition to \eqref{Eqn:Approx_Subprob}, the approximate solution $\s$ is also made to satisfy
	\begin{equation}\label{AM:1}
	\s^\top \nabla f(\x_i) + \s^\top \nabla^2 f(\x_i) \s + \sigma\left\|\s\right\|^3 = 0
	\end{equation}
	for given $x$ and $\sigma$.
	Note that \eqref{AM:1} is a consequence of the first order necessary condition, and as shown in Lemma 3.2 \cite{Cartis-2011-Adaptive-I}, the global minimizer of $m(\x_i, \s, \sigma_i)$ when restricted to a Krylov subspace $$\KCal := \text{span}\{\nabla f(\x_i), \nabla^2 f(\x_i) \nabla f(\x_i), \left( \nabla^2f(\x_i) \right)^2 \nabla f(\x_i), \ldots\}$$ satisfies \eqref{AM:1} independent of the subspace dimension. Moreover, minimizing $m(\x_i,\s,\sigma_i)$ in the Krylov subspace only involve factorizing a tri-diagonal matrix, which can be done at the cost of $O(d)$.
	Thus, the associated approximate solution can be found through the so-called Lanczos process, where the dimension of $\KCal$ is gradually increased and an orthogonal basis of each subspace $\KCal$ is built up which typically involves one matrix-vector product.  Condition \eqref{Eqn:Approx_Subprob} can be used as the termination criterion for the Lanczos process in the hope to find a suitable trial step before the dimension of $\KCal$ approaches $d$.
	
	We test the performance of the algorithms by evaluating the following regularized logistic regression problem
	\begin{equation}\label{Prob:GGLR}
	\min_{\x\in \br^d} \ f(\x) = \frac{1}{n}\sum\limits_{i=1}^n \ln \left( 1+\exp\left(-b_i \cdot {\sa}_i^\top \x\right) \right) + \frac{\lambda}{2}\| \x\|^2
	\end{equation}
	where $(\sa_i,b_i)_{i=1}^n$ is the samples in the data set, and the regularization parameter is set as $\lambda=10^{-5}$. To observe the acceleration,  the starting point is randomly generated from a Gaussian random variable with zero mean and a large variance (say $5000$). In this way, initial solutions are likely to be far away from the global solution.
	
	We compare the new AARC method with 5 other methods, including the adaptive cubic regularization of Newton's method (ARC), the trust region method (TR), the limited memory Broyden-Fletcher-Goldfarb-Shanno method (L-BFGS) that is implemented in \textsf{SCIPY Solvers} \footnote{https://docs.scipy.org/doc/scipy/reference/optimize.html\#module-scipy.optimize}, Algorithm \ref{Algorithm: AAGD} referred to as adaptive accelerated gradient descent (AAGD) and the standard Nesterov's accelerated gradient descent (AGD).  The experiments are conducted on 6 \textsf{LIBSVM Sets} \footnote{https://www.csie.ntu.edu.tw/\~{}cjlin/libsvm/} for binary classification,  and the summary of those datasets are shown in Table \ref{Table: dataset}.

	\begin{table}[h]
		\caption{Statistics of datasets.}\vspace{-1em}
		\begin{center}
			\begin{tabular}{c|c|c} \hline
				Dataset & Number of Samples & Dimension \\ \hline
				\textit{sonar} & 208 & 60 \\
				\textit{splice} & 1,000 & 60 \\
				\textit{svmguide1} & 3,089 & 4 \\
				\textit{svmguide3} & 1,243 & 22 \\
				\textit{w8a} & 49,749 & 300 \\
				\textit{SUSY} & 5,000,000 & 18 \\
				\hline
			\end{tabular}\label{Table: dataset}
		\end{center}
	\end{table}
	
	The results in Figure $1$ and Figure $2$ confirm that AARC indeed accelerates ARC, especially when the current iterates has not entered the local region of quadratic convergence yet. Moreover, AARC outperforms other methods in both computational time and iterations numbers in most cases.

	\begin{figure}
		\begin{tabular}{p{0.03\textwidth}|ccc}
			
			& & & \\ \hline
			%
			
			\rotatebox{90}{}   &
			\subfloat
			{\includegraphics[width=0.3\textwidth, height=4.5cm]
				{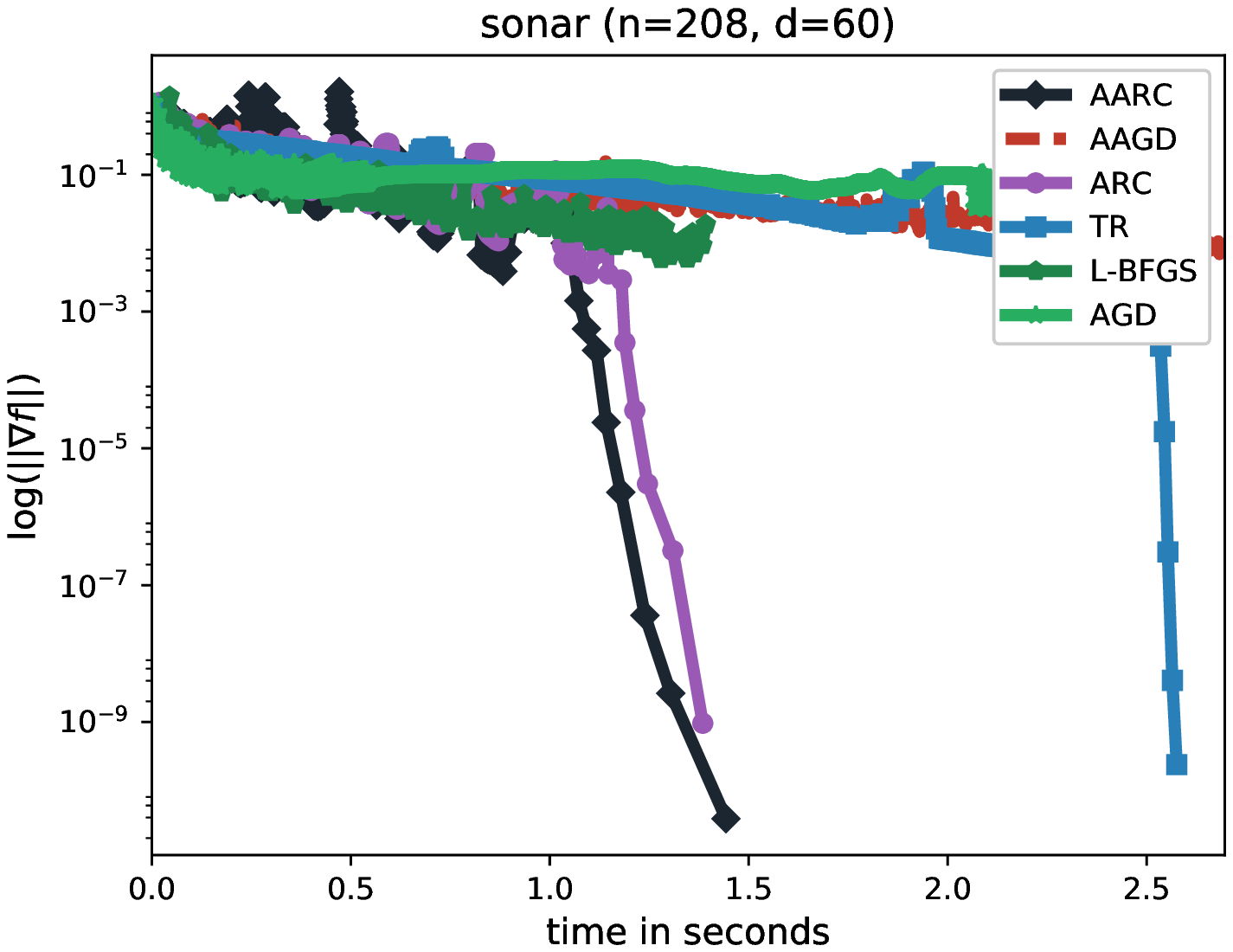}} &
			\subfloat
			{\includegraphics[width=0.3\textwidth, height=4.5cm]
				{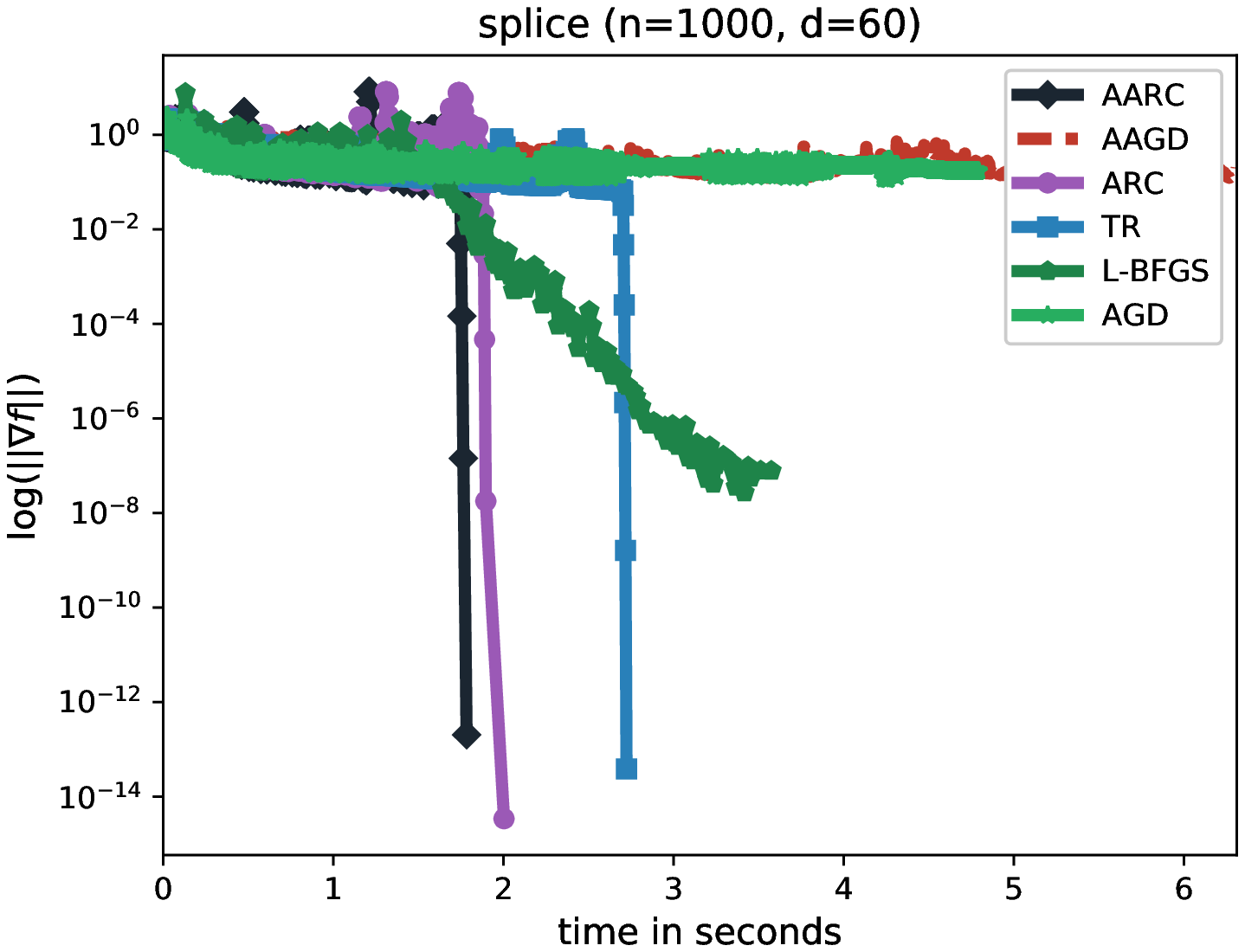}} &
			\subfloat
			{\includegraphics[width=0.3\textwidth, height=4.5cm]
				{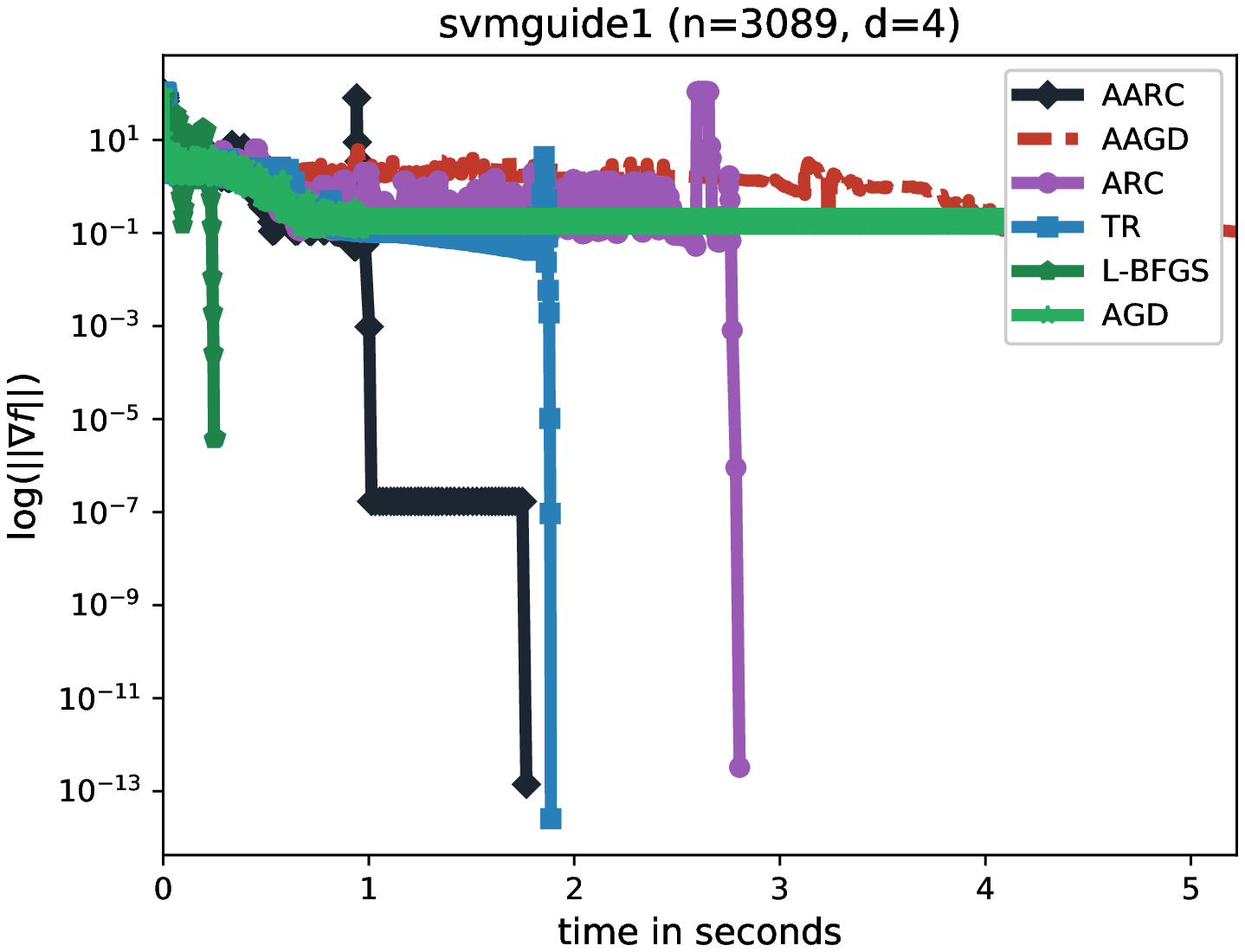}} \\
			
			\rotatebox{90}{}   &
			\subfloat
			{\includegraphics[width=0.3\textwidth, height=4.5cm]
				{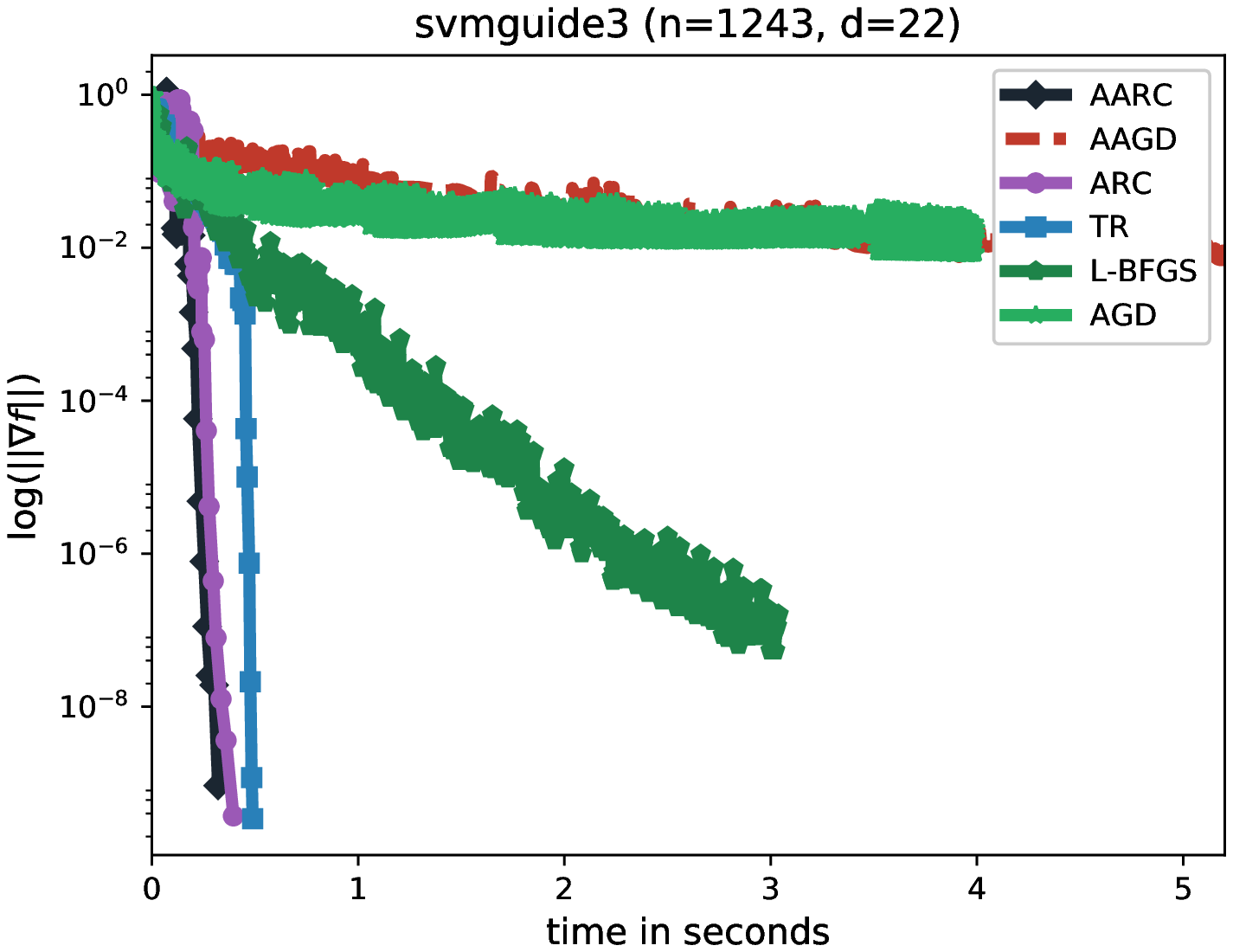}} &
			\subfloat
			{\includegraphics[width=0.3\textwidth, height=4.5cm]
				{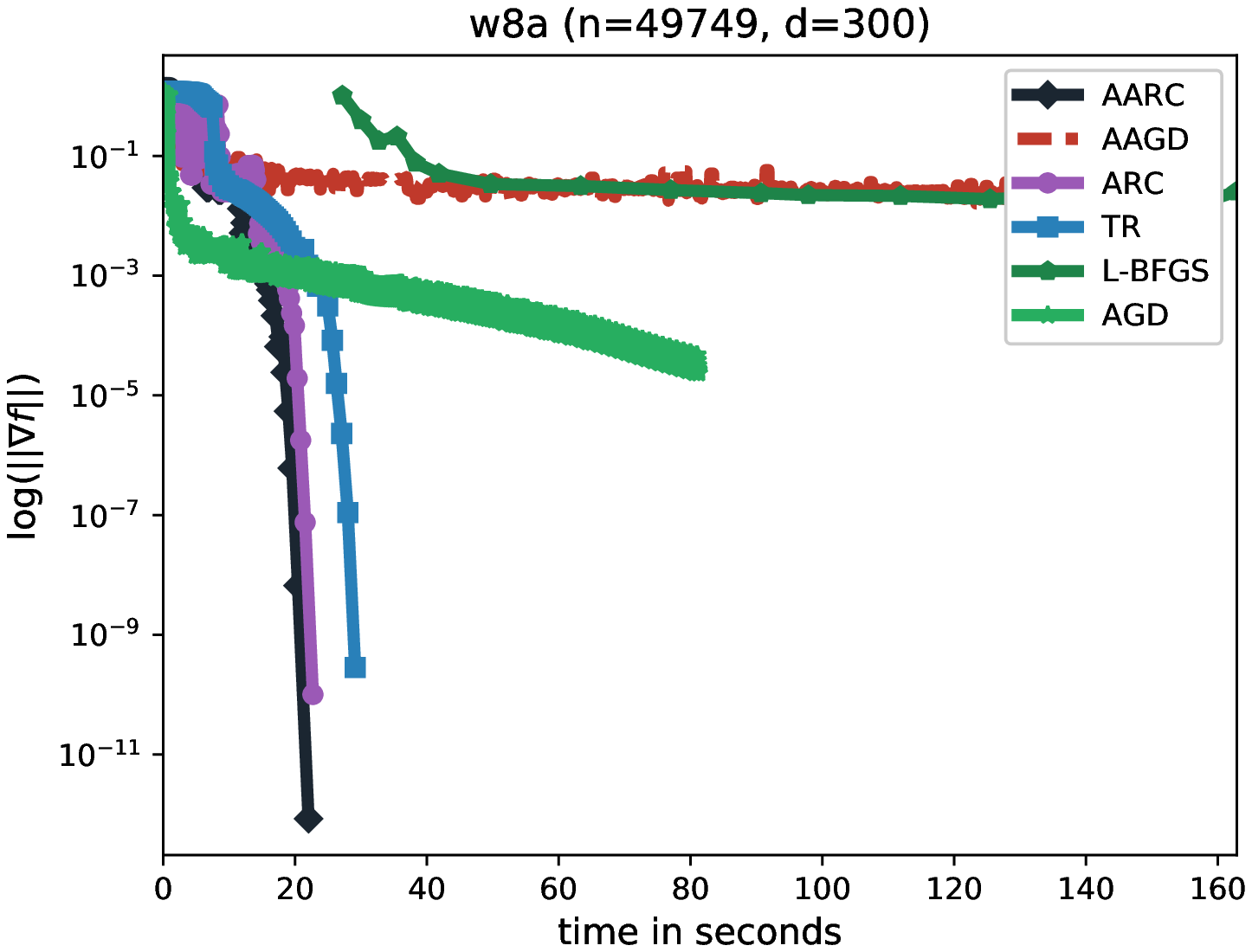}} &
			\subfloat
			{\includegraphics[width=0.3\textwidth, height=4.5cm]
				{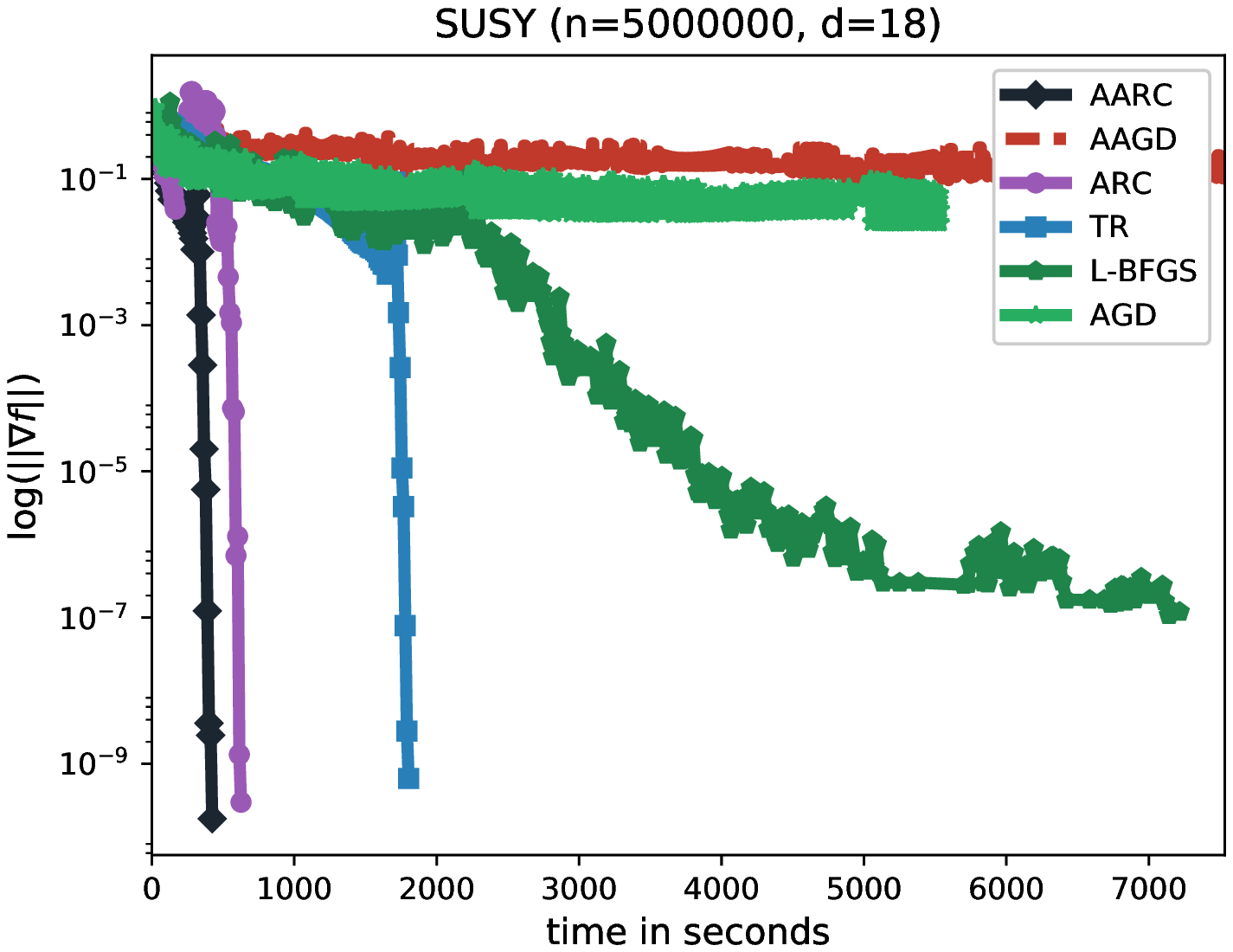}}   \\
			
		\end{tabular}\\
		\caption{Performance of AARC and all benchmark methods on the task of regularized logistic regression (loss vs.\ time)}
	\end{figure}
	
	\begin{figure}
		\begin{tabular}{p{0.03\textwidth}|ccc}
			
			& & & \\ \hline
			%
			%
			\rotatebox{90}{}   &
			\subfloat
			{\includegraphics[width=0.3\textwidth, height=4.5cm]
				{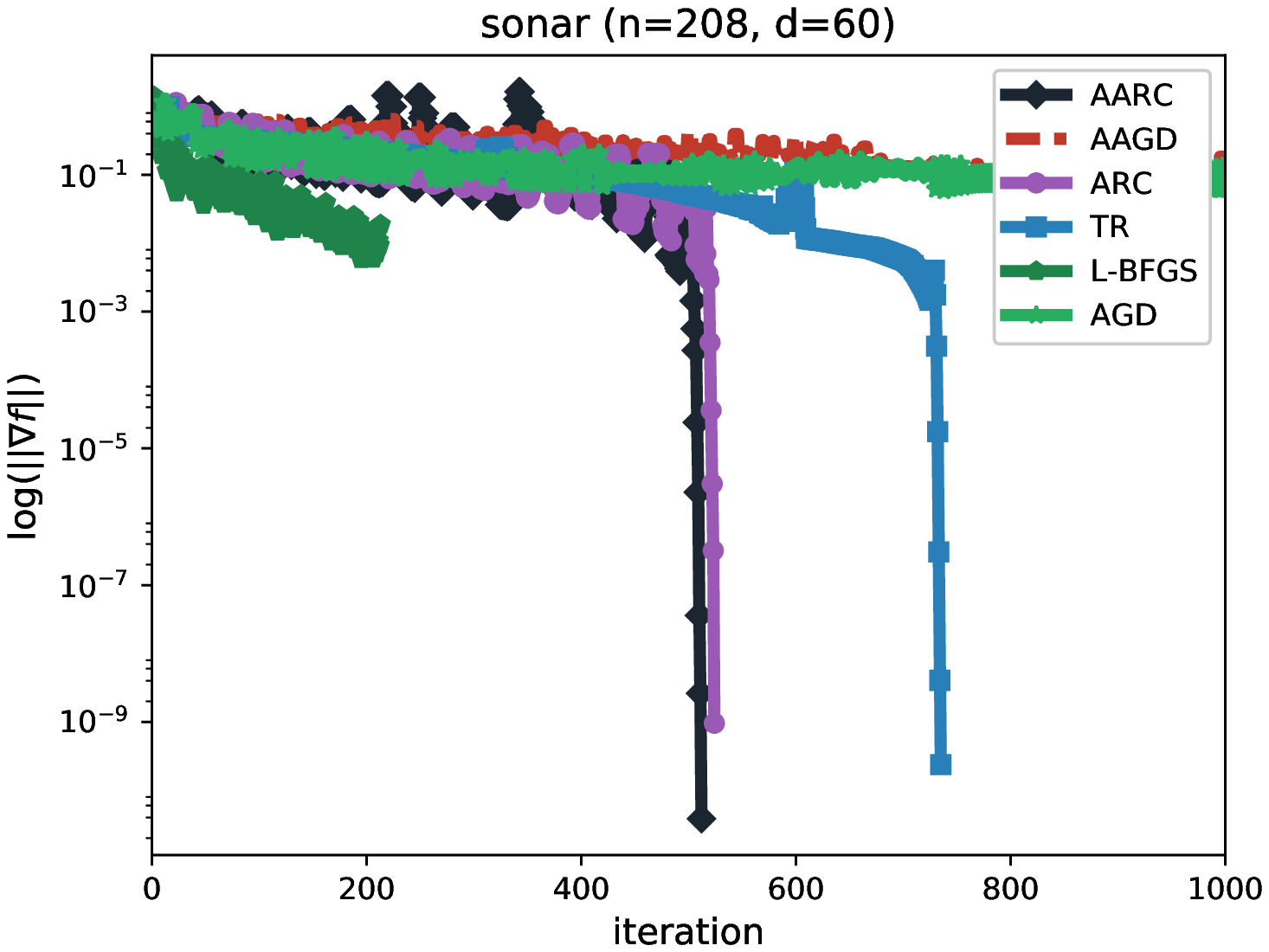}} &
			\subfloat
			{\includegraphics[width=0.3\textwidth, height=4.5cm]
				{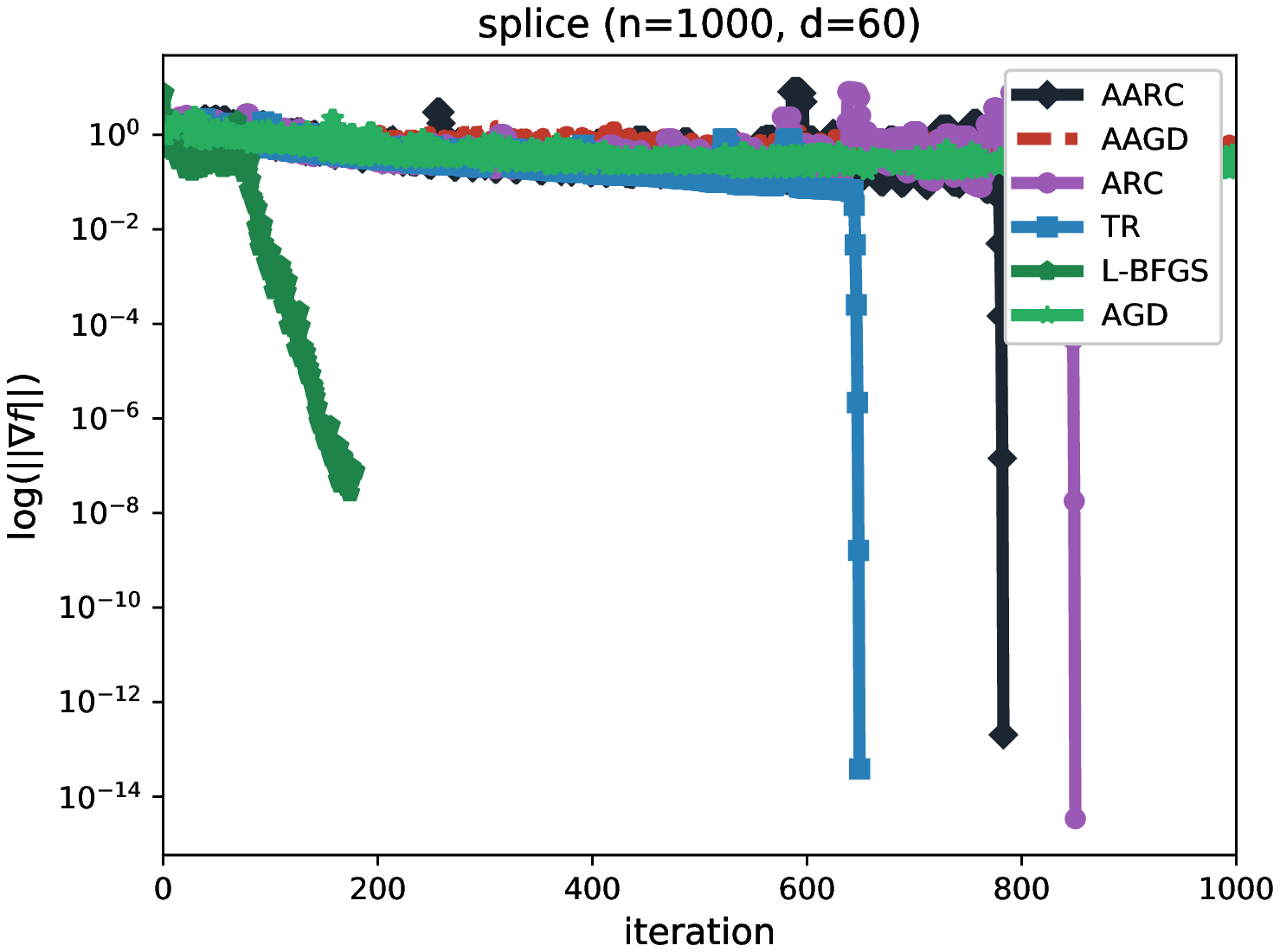}} &
			\subfloat
			{\includegraphics[width=0.3\textwidth, height=4.5cm]
				{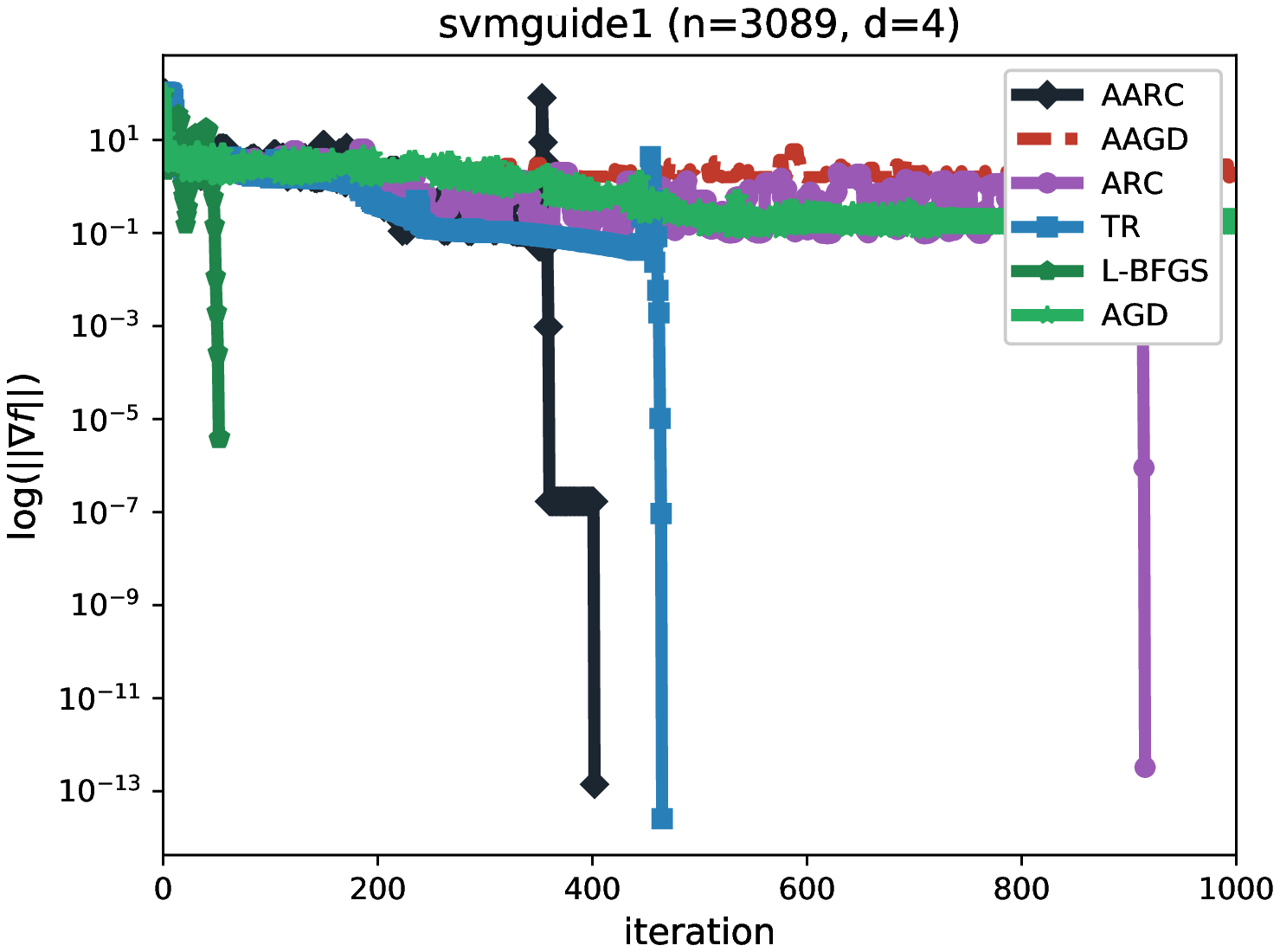}} \\
			
			\rotatebox{90}{}   &
			\subfloat
			{\includegraphics[width=0.3\textwidth, height=4.5cm]
				{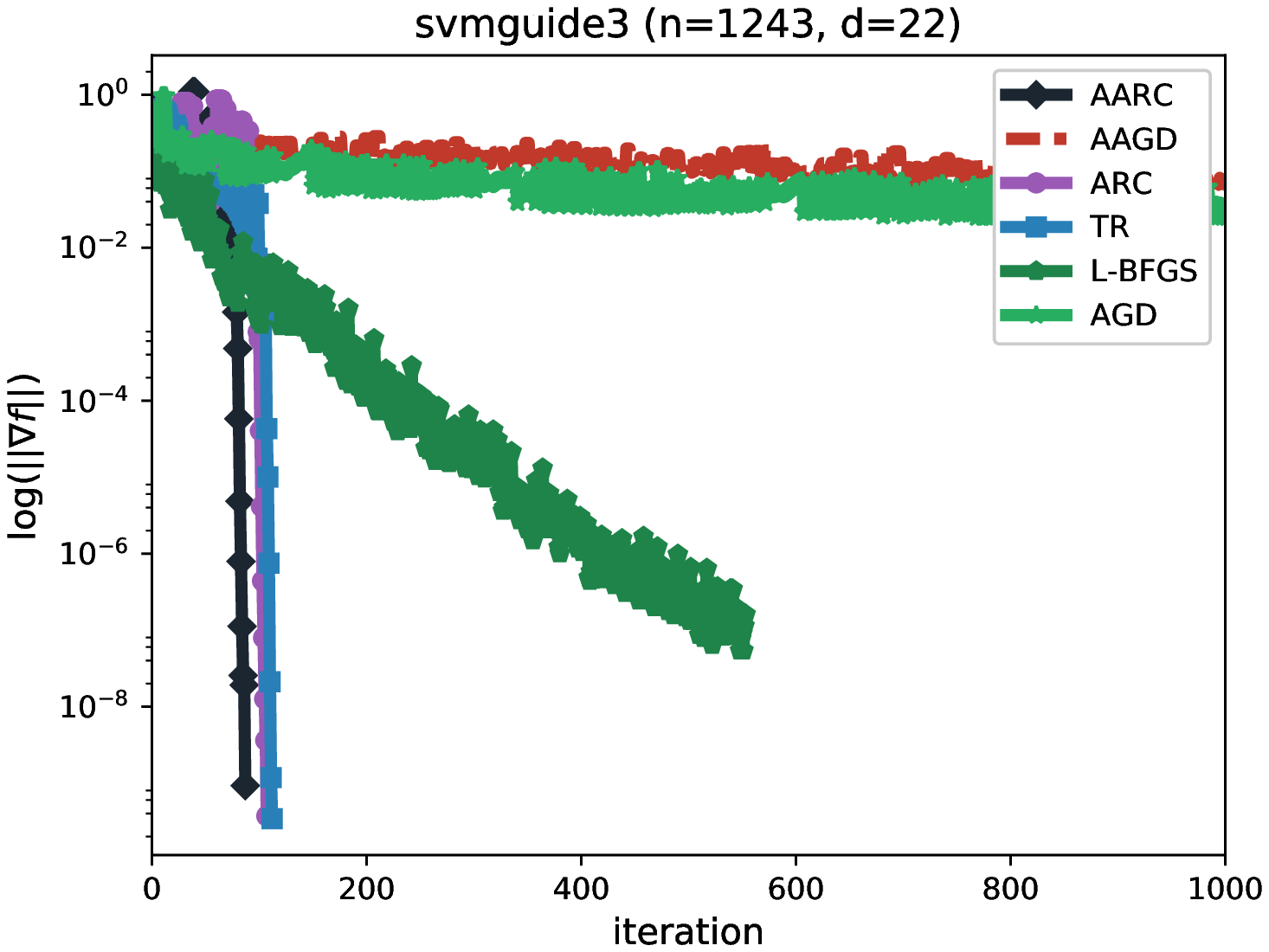}} &
			\subfloat
			{\includegraphics[width=0.3\textwidth, height=4.5cm]
				{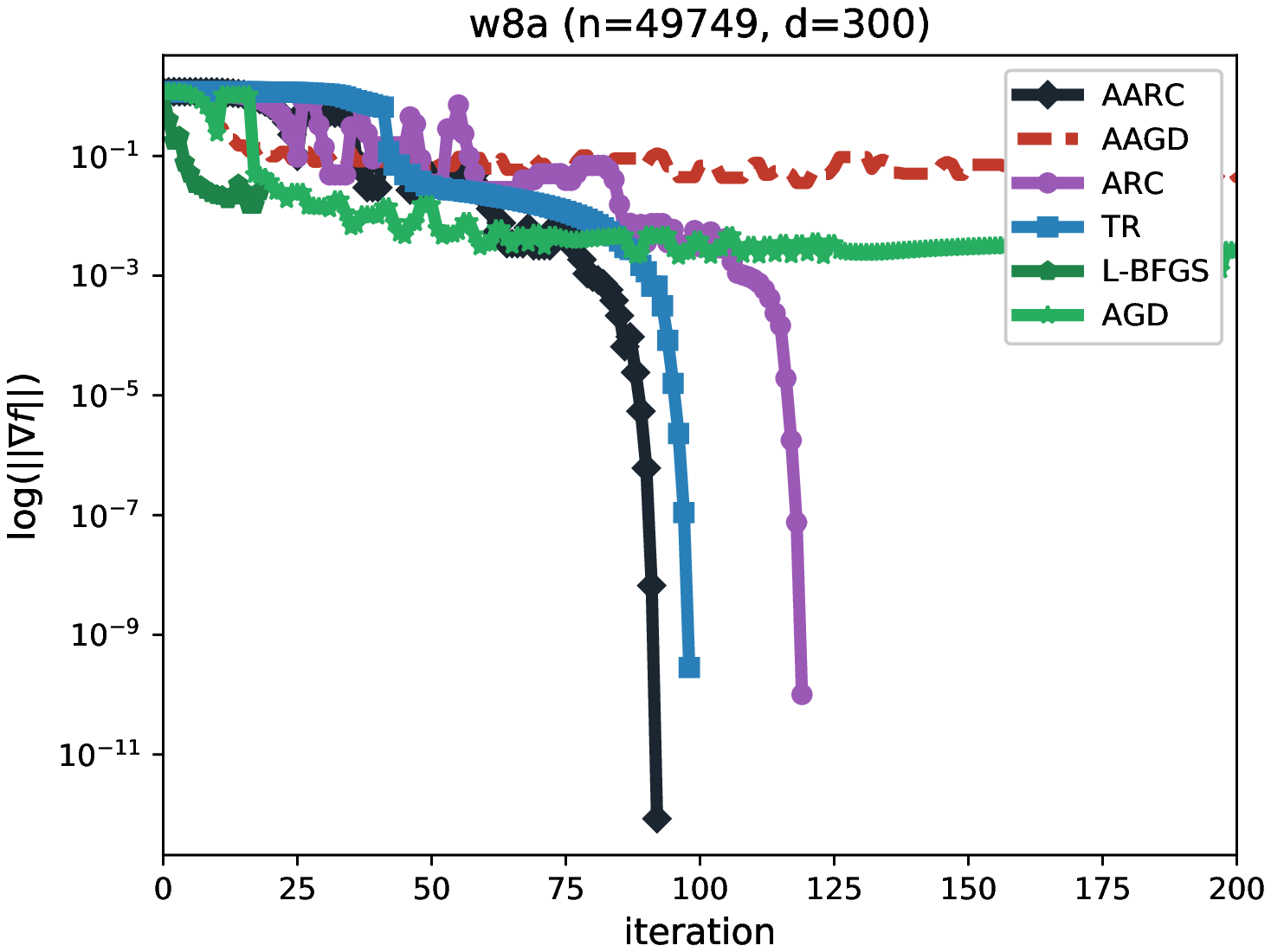}} &
			\subfloat
			{\includegraphics[width=0.3\textwidth, height=4.5cm]
				{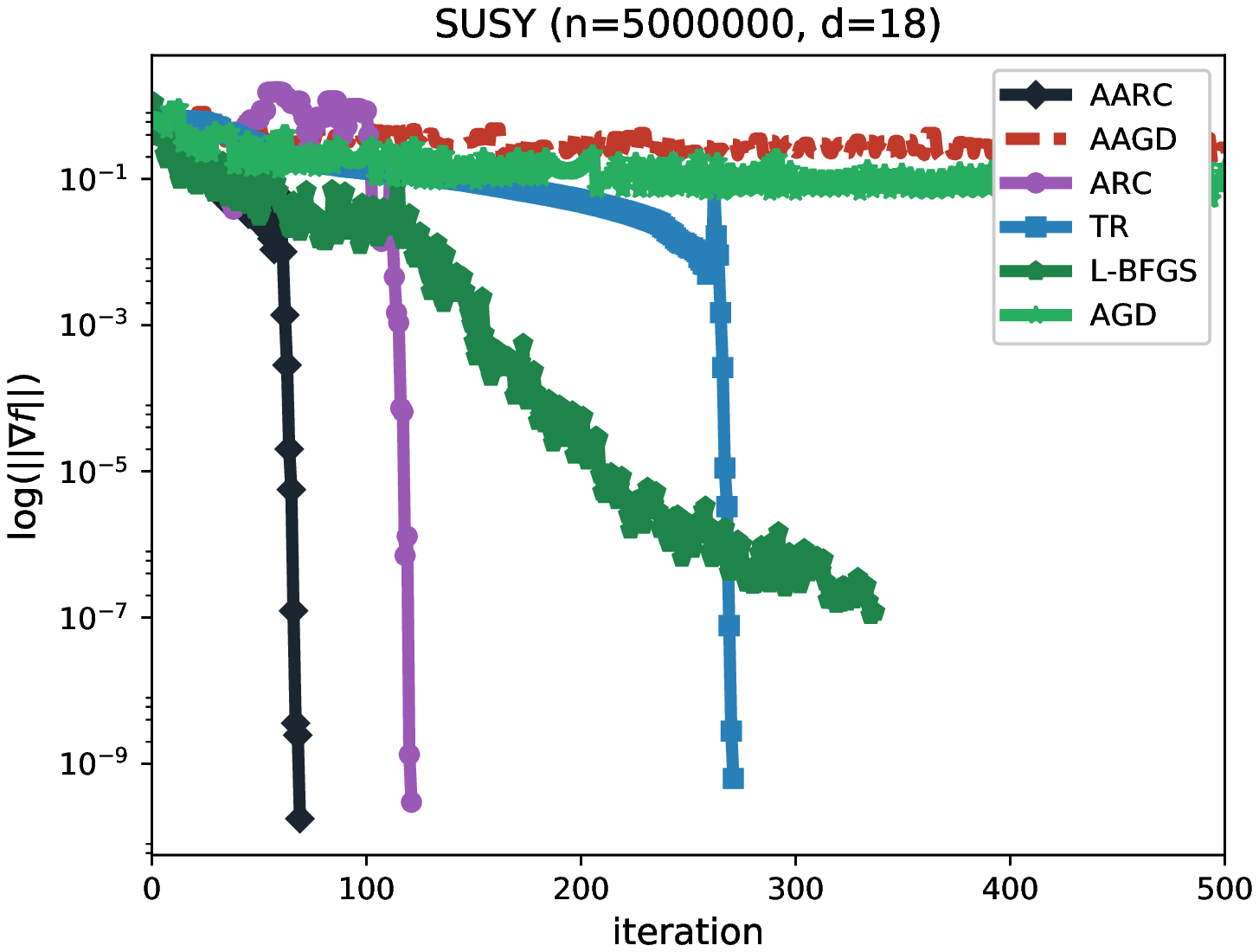}}   \\
			
		\end{tabular}
		\caption{Performance of AARC and all benchmark methods on the task of regularized logistic regression (loss vs.\ iterations)}
	\end{figure}

	
	\section*{Acknowledgement} We would like to express our deep gratitude toward Professor Xi Chen of Stern School of Business at New York University for the fruitful discussions at various stages of this project.

	\bibliographystyle{plain}

	\appendix
	\section{Proofs in Section \ref{Section:AARCQ} and Section \ref{Section:AAGD}}\label{Appendix: Proof}

	\subsection{Proofs in Section \ref{Section:AARCQ}}
	\begin{lemma}\label{Lemma:AARCQ-T1}
		Letting $\bar{\sigma}_1= \max\left\{\sigma_0, \frac{\gamma_2 L_h + 3\gamma_2 (\kappa_{e}+ \kappa_{c} )\kappa_{hs}}{2}\right\} > 0$, we have $T_1\leq 1+\frac{2}{\log\left(\gamma_1\right)}\log\left(\frac{\bar{\sigma}_1}{\sigma_{\min}}\right)$.
	\end{lemma}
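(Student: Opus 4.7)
The plan is to mirror the proof of Lemma~\ref{Lemma:AARC-T1}, but account for the additional error introduced by the inexact Hessian $H(\x_i)$ in place of $\nabla^2 f(\x_i)$. The key tool is the bound \eqref{Hessian-Approximation}, which is in force precisely because the inner loop of Algorithm~\ref{Algorithm: AARCQ} guarantees $h_i\leq\kappa_{hs}\|\s_i\|$.

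First, I would expand $f(\x_i+\s_i)$ by the integral form of Taylor's theorem and use Assumption~\ref{Assumption-Objective-Gradient-Hessian} exactly as in \eqref{Equality-Second-Order-AARC}, to obtain
\begin{equation*}
f(\x_i+\s_i)\le f(\x_i)+\s_i^\top\nabla f(\x_i)+\tfrac{1}{2}\s_i^\top\nabla^2 f(\x_i)\s_i+\tfrac{L_h}{6}\|\s_i\|^3.
\end{equation*}
Next, I would swap $\nabla^2 f(\x_i)$ for the inexact Hessian $H(\x_i)$ appearing in the model \eqref{prob:AARCQ}, paying for the change with the extra term $\tfrac{1}{2}\s_i^\top[\nabla^2 f(\x_i)-H(\x_i)]\s_i$, which by \eqref{Hessian-Approximation} is bounded by $\tfrac{(\kappa_e+\kappa_c)\kappa_{hs}}{2}\|\s_i\|^3$. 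Combining, I get
\begin{equation*}
f(\x_i+\s_i)-m(\x_i,\s_i,\sigma_i)\le\left(\tfrac{L_h}{6}+\tfrac{(\kappa_e+\kappa_c)\kappa_{hs}}{2}-\tfrac{\sigma_i}{3}\right)\|\s_i\|^3.
\end{equation*}
This implies that whenever $\sigma_i\ge\tfrac{L_h+3(\kappa_e+\kappa_c)\kappa_{hs}}{2}$, the iteration is successful ($\rho_i<0$), so an unsuccessful iteration forces $\sigma_i<\tfrac{L_h+3(\kappa_e+\kappa_c)\kappa_{hs}}{2}$, and hence $\sigma_{T_1}\le\gamma_2\sigma_{T_1-2}\le\tfrac{\gamma_2 L_h+3\gamma_2(\kappa_e+\kappa_c)\kappa_{hs}}{2}$ for $T_1\ge 2$.

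Finally, with $\bar\sigma_1$ as defined in the statement, I would exploit the update rules $\sigma_{\min}\le\sigma_i$ for all $i$ and $\sigma_{i+1}\ge\gamma_1\sigma_i$ on unsuccessful iterations, telescoping the ratios
\begin{equation*}
\frac{\bar\sigma_1}{\sigma_{\min}}\ge\frac{\sigma_{T_1}}{\sigma_0}=\frac{\sigma_{T_1}}{\sigma_{T_1-1}}\cdot\prod_{j=0}^{T_1-2}\frac{\sigma_{j+1}}{\sigma_j}\ge\gamma_1^{T_1-1}\left(\frac{\sigma_{\min}}{\bar\sigma_1}\right),
\end{equation*}
from which the claimed bound $T_1\le 1+\tfrac{2}{\log\gamma_1}\log(\bar\sigma_1/\sigma_{\min})$ follows by taking logarithms. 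The only nonroutine step is checking that the inner loop has truly delivered $h_i\le\kappa_{hs}\|\s_i\|$ whenever we reach the outer successful/unsuccessful test, so that \eqref{Hessian-Approximation} is legitimately applied when comparing $f(\x_i+\s_i)$ with $m(\x_i,\s_i,\sigma_i)$; once that is granted, the rest is a direct transcription of the exact-Hessian proof with the enlarged threshold.
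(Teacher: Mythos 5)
Your proposal is correct and follows essentially the same route as the paper's proof: the same Taylor expansion with the Lipschitz-Hessian remainder, the same swap of $\nabla^2 f(\x_i)$ for $H(\x_i)$ paid for via \eqref{Hessian-Approximation} (valid because the inner loop enforces $h_i\leq\kappa_{hs}\|\s_i\|$ before the success test), the same enlarged threshold $\frac{L_h+3(\kappa_e+\kappa_c)\kappa_{hs}}{2}$, and the same telescoping of the $\sigma$-ratios. No gaps.
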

	\begin{proof}
		We have
		\begin{eqnarray}
		f(\x_i + \s_i) & = & f(\x_i) + \s_i^\top \nabla f(\x_i) + \frac{1}{2} \s_i^\top \nabla^2 f(\x_i) \s_i + \int_{0}^{1} (1 - \tau) \s_i^\top \left[\nabla^2 f(\x_i + \tau \s_i) -  \nabla^2 f(\x_i)\right] \s_i \ d \tau \nonumber \\
		&\le & f(\x_i) + \s_i^\top \nabla f(\x_i) + \frac{1}{2} \s_i^\top \nabla^2 f(\x_i) \s_i  + \frac{ L_h}{6}  \| \s_i \|^3   \nonumber \\
		& = & m(\x_i, \s_i, \sigma_i) +  \frac{1}{2} \s_i^\top\left(\nabla^2 f(\x_i) - H(\x_i)\right) \s_i + \left(\frac{L_h}{6}-\frac{\sigma_i}{3}\right)\left\|\s_i\right\|^3, \nonumber \\
		& \leq & m(\x_i, \s_i, \sigma_i) + \left(\frac{L_h}{6} + \frac{(\kappa_{e}+ \kappa_{c} )\kappa_{hs}}{2} - \frac{\sigma_i}{3}\right)\left\|\s_i\right\|^3, \label{Equality-Second-Order-AARCQ}
		\end{eqnarray}
		where the inequalities hold true due to Assumption \ref{Assumption-Objective-Gradient-Hessian} and \eqref{Hessian-Approximation}. Therefore, we conclude that
		\begin{equation*}
		\sigma_i\geq\frac{L_h + 3 (\kappa_{e}+ \kappa_{c} )\kappa_{hs}}{2} \quad \Longrightarrow \quad f(\x_i+\s_i)\leq m(\x_i, \s_i, \sigma_i),
		\end{equation*}
		which further implies that $\sigma_i<\frac{L_h + 3(\kappa_{e}+ \kappa_{c} )\kappa_{hs}}{2}$ for $i\le T_1-2$. Hence,
		\begin{equation*}
		\sigma_{T_1} \le \sigma_{T_1 -1}\le \sigma_{T_1 -2} \le \frac{ \gamma_2 L_h + 3\gamma_2 (\kappa_{e}+ \kappa_{c} )\kappa_{hs}}{2}.
		\end{equation*}
		Because $\bar{\sigma}_1=\max\left\{\sigma_0, \frac{\gamma_2 L_h + 3 \gamma_2 (\kappa_{e}+ \kappa_{c} )\kappa_{hs}}{2}\right\}$, it follows from the construction of Algorithm \ref{Algorithm: AARCQ} that $\sigma_{\min}\leq\sigma_i$ for all iterations, and $\gamma_1\sigma_i\leq\sigma_{i+1}$ for all unsuccessful iterations. Consequently, we have
		\begin{equation*}
		\frac{\bar{\sigma}_1}{\sigma_{\min}} \geq \frac{\sigma_{T_1}}{\sigma_0} = \frac{\sigma_{T_1}}{\sigma_{T_1-1}} \cdot \prod_{j=0}^{T_1-2} \frac{\sigma_{j+1}}{\sigma_j} \geq \gamma_1^{T_1-1}\left(\frac{\sigma_{\min}}{\bar{\sigma}_1}\right),
		\end{equation*}
		and hence $T_1\leq 1+\frac{2}{\log\left(\gamma_1\right)}\log\left(\frac{\bar{\sigma}_1}{\sigma_{\min}}\right)$.
	\end{proof}
	
	\begin{lemma}\label{Lemma:AARCQ-T2}
		Letting $\bar{\sigma}_2 = \max\left\{\bar{\sigma}_1,\frac{\gamma_2 L_h}{2}+\gamma_2\kappa_{\theta}+\gamma_2(\kappa_{e}+ \kappa_{c} )\kappa_{hs}+\gamma_2\eta\right\}>0$, we have 
		$$T_2\leq \left(1+\frac{2}{\log(\gamma_1)}\log\left(\frac{\bar{\sigma}_2}{\sigma_{\min}}\right)\right)|\SCal|.$$
	\end{lemma}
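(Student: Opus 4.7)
The plan is to mirror the argument used in Lemma~\ref{Lemma:AARC-T2}, while keeping careful track of the new error term coming from using the inexact Hessian $H(\y_l)$ in place of $\nabla^2 f(\y_l)$. The target is to show that once $\sigma_{T_1+j}$ exceeds a certain threshold determined by $L_h$, $\kappa_\theta$, the Hessian-approximation constant $(\kappa_e+\kappa_c)\kappa_{hs}$, and $\eta$, the criterion $\rho_{T_1+j}\geq \eta$ must hold, so the iteration is declared successful.

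First I will estimate $\s_{T_1+j}^\top\nabla f(\y_l+\s_{T_1+j})$ by splitting it as in the AARC proof: subtract and add the terms $\nabla f(\y_l)+\nabla^2 f(\y_l)\s_{T_1+j}$. The remainder term is bounded by $\tfrac{L_h}{2}\|\s_{T_1+j}\|^3$ via Assumption~\ref{Assumption-Objective-Gradient-Hessian}. The gradient-of-the-model expression is now $\nabla f(\y_l)+H(\y_l)\s_{T_1+j}+\sigma_{T_1+j}\|\s_{T_1+j}\|\s_{T_1+j}$; here I substitute $\nabla^2 f(\y_l)$ for $H(\y_l)$ and pick up an additional error controlled by \eqref{Hessian-Approximation}, giving an extra $(\kappa_e+\kappa_c)\kappa_{hs}\|\s_{T_1+j}\|^3$ contribution. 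Using Condition~\ref{Cond:Approx_Subprob} on $\|\nabla m(\y_l,\s_{T_1+j},\sigma_{T_1+j})\|$ yields another $\kappa_\theta\|\s_{T_1+j}\|^3$ term. Collecting these pieces gives
\begin{equation*}
-\frac{\s_{T_1+j}^\top\nabla f(\y_l+\s_{T_1+j})}{\|\s_{T_1+j}\|^3}\;\ge\; \sigma_{T_1+j}-\frac{L_h}{2}-\kappa_\theta-(\kappa_e+\kappa_c)\kappa_{hs}.
\end{equation*}

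Consequently, whenever $\sigma_{T_1+j}\ge \tfrac{L_h}{2}+\kappa_\theta+(\kappa_e+\kappa_c)\kappa_{hs}+\eta$, iteration $T_1+j$ is successful. Combined with the rule that $\sigma$ can only grow by at most a factor of $\gamma_2$ in a single (unsuccessful) step, we get $\sigma_{T_1+j+1}\le \gamma_2\bigl(\tfrac{L_h}{2}+\kappa_\theta+(\kappa_e+\kappa_c)\kappa_{hs}+\eta\bigr)$ for every $j\in\SCal$; together with the upper bound $\bar{\sigma}_1$ on $\sigma_{T_1}$ from Lemma~\ref{Lemma:AARCQ-T1}, this justifies the definition of $\bar{\sigma}_2$ as an overall upper bound on $\sigma_{T_1+j}$ throughout Phase~II.

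Finally, I will close the argument by the same telescoping/geometric trick as in Lemma~\ref{Lemma:AARC-T2}: since $\sigma_{\min}\le \sigma_{T_1+j}\le \bar{\sigma}_2$ and $\gamma_1\sigma_{T_1+j}\le \sigma_{T_1+j+1}$ on all unsuccessful iterations, writing
\begin{equation*}
\frac{\bar{\sigma}_2}{\sigma_{\min}}\;\ge\; \frac{\sigma_{T_1+T_2}}{\sigma_{T_1}} \;=\; \prod_{j\in\SCal}\frac{\sigma_{T_1+j+1}}{\sigma_{T_1+j}}\cdot\prod_{j\notin\SCal}\frac{\sigma_{T_1+j+1}}{\sigma_{T_1+j}} \;\ge\; \gamma_1^{T_2-|\SCal|}\left(\frac{\sigma_{\min}}{\bar{\sigma}_2}\right)^{|\SCal|}
\end{equation*}
and taking logs gives $T_2\le |\SCal|+\tfrac{|\SCal|+1}{\log\gamma_1}\log(\bar{\sigma}_2/\sigma_{\min})\le \bigl(1+\tfrac{2}{\log\gamma_1}\log(\bar{\sigma}_2/\sigma_{\min})\bigr)|\SCal|$, which is the claim. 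The only genuinely new obstacle compared to the exact-Hessian case is correctly isolating and bounding the $H(\y_l)-\nabla^2 f(\y_l)$ term; once \eqref{Hessian-Approximation} is invoked everything else is a direct transcription of Lemma~\ref{Lemma:AARC-T2}.
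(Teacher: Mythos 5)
Your proposal is a correct reconstruction and follows essentially the same route as the paper's own proof: you decompose $\s_{T_1+j}^\top\nabla f(\y_l+\s_{T_1+j})$ exactly as in Lemma~\ref{Lemma:AARC-T2}, bound the Taylor remainder by $\tfrac{L_h}{2}\|\s\|^3$, invoke Condition~\ref{Cond:Approx_Subprob} for the $\kappa_\theta\|\s\|^3$ term, and isolate the inexact-Hessian error via \eqref{Hessian-Approximation} to get $(\kappa_e+\kappa_c)\kappa_{hs}\|\s\|^3$, finishing with the same telescoping-ratio argument. No gap; the argument matches the paper's proof of this lemma.
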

	\begin{proof}
		We have
		{\small\begin{eqnarray*}
				& & \s_{T_1+j}^\top\nabla f(\y_l+\s_{T_1+j}) \\
				& = & \s_{T_1+j}^\top\left[\nabla f(\y_l + \s_{T_1+j}) - \nabla f(\y_l) - \nabla^2 f(\y_l) \s_{T_1+j}\right] + \s_{T_1+j}^\top\left[ \nabla f(\y_l) + \nabla^2 f(\y_l) \s_{T_1+j}\right] \\
				& \leq & \left\| \nabla f(\y_l+\s_{T_1+j}) - \nabla f(\y_l) - \nabla^2 f(\y_l) \s_{T_1+j}\right\| \left\| \s_{T_1+j}\right\|   + \s_{T_1+j}^\top\nabla m\left(\y_l, \s_{T_1+j}, \sigma_{T_1+j}\right) \\
				& & + \s_{T_1+j}^\top\left(\nabla^2 f(\y_l) - H(\y_l)\right)\s_{T_1+j} - \sigma_{T_1+j} \left\| \s_{T_1+j}\right\|^2 \\
				& {\text{Condition \ref{Cond:Approx_Subprob}} \above 0pt  \leq }  & \left\| \nabla f(\y_l+\s_{T_1+j}) - \nabla f(\y_l) - \nabla^2 f(\y_l) \s_{T_1+j}\right\| \left\| \s_{T_1+j}\right\| - \sigma_{T_1+j} \left\| \s_{T_1+j}\right\|^3 + \left(\kappa_\theta + (\kappa_{e}+ \kappa_{c} )\kappa_{hs}\right) \left\| \s_{T_1+j}\right\|^3 \\
				& = & \left\| \int_{0}^{1}  \left[ \nabla^2 f(\y_l + \tau \cdot \s_{T_1+j}) - \nabla^2 f(\y_l ) \right] \s_{T_1+j} \ d \tau \right\| \left\| \s_{T_1+j}\right\|  - \sigma_{T_1+j} \left\| \s_{T_1+j}\right\|^3 + \left(\kappa_\theta + (\kappa_{e}+ \kappa_{c} )\kappa_{hs}\right) \left\| \s_{T_1+j}\right\|^3 \\
				&  \leq  & \left(\frac{L_h}{2} + \kappa_\theta + (\kappa_{e}+ \kappa_{c} )\kappa_{hs} - \sigma_{T_1+j}\right)\left\|\s_{T_1+j}\right\|^3,
		\end{eqnarray*}}
		\noindent where the last inequality is due to Assumption \ref{Assumption-Objective-Gradient-Hessian}. Then it follows that
		\begin{equation*}
		-\frac{\s_{T_1+j}^\top\nabla f(\y_l+\s_{T_1+j})}{\left\| \s_{T_1+j}\right\|^3} \geq \sigma_{T_1+j} - \frac{L_h}{2} - \kappa_\theta - (\kappa_{e}+ \kappa_{c} )\kappa_{hs}.
		\end{equation*}
		Therefore, we have
		\begin{equation*}
		\sigma_{T_1+j} \geq \frac{L_h}{2}+\kappa_\theta+(\kappa_{e}+ \kappa_{c} )\kappa_{hs}+\eta \quad \Longrightarrow -\frac{\s_{T_1+j}^\top\nabla f(\y_l+\s_{T_1+j})}{\left\| \s_{T_1+j}\right\|^3}\geq\eta ,
		\end{equation*}
		which further implies that
		\begin{equation*}
		\sigma_{T_1+j+1}\leq \sigma_{T_1+j}\leq \gamma_2 \cdot \sigma_{T_1+j-1}\leq \gamma_2 \left(\frac{L_h}{2}+\kappa_\theta+(\kappa_{e}+ \kappa_{c} )\kappa_{hs}+\eta\right),\; \forall \; j \in \SCal.
		\end{equation*}
		Therefore, the above quantity can be bounded by $\bar{\sigma}_2=\max\left\{\bar{\sigma}_1,\frac{\gamma_2 L_h}{2}+\gamma_2\kappa_\theta+\gamma_2(\kappa_{e}+ \kappa_{c} )\kappa_{hs}+\gamma_2\eta\right\}$, where $\bar{\sigma}_1$ is responsible for an upper bound of $\sigma_{T_1}$. In addition, it follows from the construction of Algorithm \ref{Algorithm: AARCQ} that $\sigma_{\min}\leq\sigma_{T_1+j}$ for all iterations, and $\gamma_1\sigma_{T_1+j}\leq\sigma_{T_1+j+1}$ for all unsuccessful iterations. Therefore, we have
		\begin{equation*}
		\frac{\bar{\sigma}_2}{\sigma_{\min}} \geq \frac{\sigma_{T_1+T_2}}{\sigma_{T_1}} = \prod_{j\in\SCal} \frac{\sigma_{T_1+j+1}}{\sigma_{T_1+j}} \cdot \prod_{j\notin\SCal} \frac{\sigma_{T_1+j+1}}{\sigma_{T_1+j}} \geq \gamma_1^{T_2-|\SCal|}\left(\frac{\sigma_{\min}}{\bar{\sigma}_2}\right)^{|\SCal|},
		\end{equation*}
		hence
		\begin{equation*}
		|\SCal| \le T_2\leq |\SCal|+\frac{\left(|\SCal|+1\right)}{\log\gamma_1} \log\left(\frac{\bar{\sigma}_2}{\sigma_{\min}}\right) \leq \left(1+ \frac{2}{\log\gamma_1} \log\left(\frac{\bar{\sigma}_2}{\sigma_{\min}}\right)\right)|\SCal|.
		\end{equation*}
	\end{proof}
	Before estimating an upper bound for $T_3$, i.e., the total number of times of successfully updating $\varsigma>0$, we need to extend Lemma \ref{Lemma:AARC-T3-P2} in Algorithm \ref{Algorithm: AARC} to the following lemma.
	\begin{lemma}\label{Lemma:AARCQ-T3-P}
		For each iteration $j$ in the subroutine \textsf{AAS}, if it is successful, we have
		\begin{equation*}
		(1-\kappa_\theta)\left\| \nabla f(\x_{j+1})\right\| \leq \left(\frac{ L_h}{2}+\bar{\sigma}_2 + (\kappa_{e}+ \kappa_{c} )\kappa_{hs} + \kappa_\theta L_g\right)\left\|\s_j\right\|^2,
		\end{equation*}
		where $\kappa_\theta\in(0,1)$ is used in Condition~\ref{Cond:Approx_Subprob}.
	\end{lemma}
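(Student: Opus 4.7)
The plan is to mimic the proof of Lemma \ref{Lemma:AARC-T3-P2} but carefully track the extra error introduced by replacing $\nabla^2 f(\y_l)$ with the inexact Hessian $H(\y_l)$. As before, I would start from the triangle inequality
\begin{equation*}
\left\|\nabla f(\x_{j+1})\right\| \leq \left\|\nabla f(\y_l+\s_j) - \nabla_{\s} m(\y_l,\s_j,\sigma_j)\right\| + \left\|\nabla_{\s} m(\y_l,\s_j,\sigma_j)\right\|,
\end{equation*}
where now $\nabla_{\s} m(\y_l,\s_j,\sigma_j) = \nabla f(\y_l) + H(\y_l)\s_j + \sigma_j\|\s_j\|\s_j$. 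The second term is controlled by Condition \ref{Cond:Approx_Subprob} exactly as in the exact-Hessian case, giving a bound of $\kappa_\theta \min(1,\|\s_j\|)\|\nabla f(\y_l)\|$.

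The only new ingredient is in the first term. I would rewrite
\begin{equation*}
\nabla f(\y_l+\s_j) - \nabla_{\s} m(\y_l,\s_j,\sigma_j) = \int_0^1 \left[\nabla^2 f(\y_l+\tau\s_j) - \nabla^2 f(\y_l)\right]\s_j \, d\tau + \left[\nabla^2 f(\y_l)-H(\y_l)\right]\s_j - \sigma_j\|\s_j\|\s_j,
\end{equation*}
so the Hessian-Lipschitz term contributes $\frac{L_h}{2}\|\s_j\|^2$ as before, the bound \eqref{Hessian-Approximation} contributes the new term $(\kappa_e+\kappa_c)\kappa_{hs}\|\s_j\|^2$, and the cubic term contributes $\sigma_j\|\s_j\|^2 \le \bar{\sigma}_2\|\s_j\|^2$ by Lemma \ref{Lemma:AARCQ-T2}.

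Finally, I would bound $\|\nabla f(\y_l)\|$ by the usual trick $\|\nabla f(\y_l)\| \leq \|\nabla f(\y_l)-\nabla f(\y_l+\s_j)\| + \|\nabla f(\x_{j+1})\| \leq L_g\|\s_j\| + \|\nabla f(\x_{j+1})\|$, using the Lipschitz property \eqref{Def:Lipschitz-Gradient}. Collecting terms yields
\begin{equation*}
\left\|\nabla f(\x_{j+1})\right\| \leq \left(\tfrac{L_h}{2} + \bar{\sigma}_2 + (\kappa_e+\kappa_c)\kappa_{hs} + \kappa_\theta L_g\right)\|\s_j\|^2 + \kappa_\theta\|\nabla f(\x_{j+1})\|,
\end{equation*}
and rearranging gives the claim. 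There is no real obstacle here; the argument is a direct parallel of Lemma \ref{Lemma:AARC-T3-P2}, with the bookkeeping for the $\|\nabla^2 f(\y_l) - H(\y_l)\|$ error being the only new ingredient, and this is controlled cleanly by \eqref{Hessian-Approximation} together with the inner-loop stopping criterion $h_{i,k}\le \kappa_{hs}\|\s_{i,k}\|$.
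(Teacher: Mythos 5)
Your proposal follows the paper's proof step for step: the same triangle-inequality split of $\nabla f(\x_{j+1})$, the same application of Condition~\ref{Cond:Approx_Subprob} to bound $\|\nabla_{\s} m\|$, the same decomposition of $\nabla f(\y_l+\s_j)-\nabla_{\s} m$ into the Hessian-Lipschitz integral, the $(\nabla^2 f(\y_l)-H(\y_l))\s_j$ error controlled by~\eqref{Hessian-Approximation}, and the $\sigma_j\|\s_j\|\s_j$ term bounded via $\bar{\sigma}_2$, followed by the same trick of splitting $\|\nabla f(\y_l)\|\le L_g\|\s_j\|+\|\nabla f(\x_{j+1})\|$ and using $\min(1,\|\s_j\|)$ appropriately for each piece before rearranging. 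This is essentially the paper's own argument.
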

	\begin{proof}
		We denote $j$-th iteration is the $l$-th successful iteration, and note $\nabla_{\s} m(\y_l,\s_j,\sigma_j)=\nabla f(\y_l) + H(\y_l)\s_j + \sigma_j\|\s_j\|\cdot\s_j$. Then we have	
		\begin{eqnarray*}
						& & \left\| \nabla f(\x_{j+1}) \right\| \\
			 & \leq & \left\|  \nabla f(\y_l+\s_j) - \nabla_{\s} m(\y_l, \s_j, \sigma_j)\right\| + \left\|\nabla_{\s} m(\y_l, \s_j, \sigma_j)\right\| \\
			& \leq & \left\| \nabla f(\y_l+\s_j) - \nabla_{\s} m(\y_l, \s_j, \sigma_j)\right\| + \kappa_\theta\cdot\min\left(1, \left\|\s_j\right\|\right)\cdot \left\| \nabla f(\y_l) \right\| \\
			& \leq & \left\|\int_{0}^1\left( \nabla^2 f(\y_l+\tau \s_j)- \nabla^2 f(\y_l)\right) \s_j d\tau\right\| + \left\| \nabla^2 f(\y_l) - H(\y_l)\right\|\left\| \s_j\right\| + \sigma_j\left\| \s_j \right\|^2 + \kappa_\theta\cdot\min\left(1, \left\|\s_j\right\|\right)\cdot \left\| \nabla f(\y_l)\right\| \\
			& \leq & \frac{L_h}{2} \left\|\s_j\right\|^2 + (\kappa_{e}+ \kappa_{c} )\kappa_{hs}\left\| \s_j \right\|^2 +
			\sigma_j\left\|\s_j\right\|^2 +  \kappa_\theta \cdot \|\s_j \| \cdot \left\| \nabla f(\y_l) - \nabla f(\y_l+\s_j)\right\| + \kappa_\theta \left\| \nabla f(\x_{j+1})\right\| \\
			& \leq & \frac{L_h}{2} \left\|\s_j\right\|^2 + (\kappa_{e}+ \kappa_{c} )\kappa_{hs}\left\|\s_j\right\|^2 + \bar{\sigma}_2\left\|\s_j\right\|^2 +  \kappa_\theta L_g\left\|\s_j\right\|^2 + \kappa_\theta\left\|\nabla f(\x_{j+1})\right\|,
		\end{eqnarray*}
		where the second inequality holds true due to Condition~\ref{Cond:Approx_Subprob}, and the last two inequality follow from Assumption \ref{Assumption-Objective-Gradient-Hessian}. Rearranging the terms, the conclusion follows.
	\end{proof}
	Now we are ready to estimate the upper bound of $T_3$, i.e., the total number of count of successfully updating $\varsigma>0$.
	\begin{lemma}\label{Lemma:AARCQ-T3}
		We must have
		\begin{equation*}
		\psi_l(\z_l)\ge\frac{l(l+1)(l+2)}{6} f(\bar{\x}_l)
		\end{equation*}
		if $\varsigma_l\ge\left(\frac{ L_h+2\bar{\sigma}_2+2\kappa_\theta L_g}{1-\kappa_\theta}\right)^{3} \frac{1}{\eta^2}$, which further implies that
		\begin{equation*}
		T_3 \le \left\lceil \frac{1}{\log\left(\gamma_3\right)}\log \left[\left(\frac{ L_h+2\bar{\sigma}_2+2(\kappa_{e}+ \kappa_{c} )\kappa_{hs}+2\kappa_\theta L_g}{1-\kappa_\theta}\right)^{3} \frac{1}{\eta^2\varsigma_1} \right] \right\rceil.
		\end{equation*}
	\end{lemma}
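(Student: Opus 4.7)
The plan is to mirror the proof of Lemma~\ref{Lemma:AARC-T3} almost verbatim, with the key difference that the bound relating $\|\nabla f(\x_{j+1})\|$ to $\|\s_j\|^2$ will now be supplied by Lemma~\ref{Lemma:AARCQ-T3-P} (which accounts for the Hessian approximation error $(\kappa_e+\kappa_c)\kappa_{hs}$) rather than by Lemma~\ref{Lemma:AARC-T3-P2}. First, I would proceed by induction on $l$. The base case $l=1$ is immediate since $\psi_1(\z_1) = f(\bar{\x}_1)$ by construction. For the inductive step, assuming \eqref{Inequality:induction} (with $l$ replaced by $l-1$), observe that Lemma~\ref{Lemma:AARC-T3-P1} applies verbatim here because $\psi_{l-1}(\z)$ still has the form ``linear function $+\frac{\varsigma_{l-1}}{6}\|\z-\bar{\x}_1\|^3$'', yielding
\begin{equation*}
\psi_{l-1}(\z) \ge \frac{(l-1)l(l+1)}{6} f(\bar{\x}_{l-1}) + \frac{1}{12}\varsigma_{l-1}\|\z-\z_{l-1}\|^3.
\end{equation*}

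Next, I would substitute this lower bound into the defining recursion for $\psi_l(\z_l)$ and use convexity of $f$ at $\bar{\x}_l$ (i.e., $f(\bar{\x}_{l-1}) \ge f(\bar{\x}_l) + (\bar{\x}_{l-1}-\bar{\x}_l)^\top \nabla f(\bar{\x}_l)$) together with the identity $\frac{(l-1)l(l+1)}{6}\bar{\x}_{l-1} = \frac{l(l+1)(l+2)}{6}\y_{l-1} - \frac{l(l+1)}{2}\z_{l-1}$ from the definition of $\y_{l-1}$. This will isolate the term $\frac{l(l+1)(l+2)}{6} f(\bar{\x}_l)$ plus a residual of the form
\begin{equation*}
\min_{\z} \left\{\frac{l(l+1)(l+2)}{6}(\y_{l-1}-\bar{\x}_l)^\top \nabla f(\bar{\x}_l) + \frac{1}{12}\varsigma_l \|\z-\z_{l-1}\|^3 + \frac{l(l+1)}{2}(\z-\z_{l-1})^\top \nabla f(\bar{\x}_l)\right\}.
\end{equation*}

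The main (and only substantive) change from the exact-Hessian case happens here. By the successful iteration criterion, $(\y_{l-1}-\bar{\x}_l)^\top \nabla f(\bar{\x}_l) = -\s_{T_1+j}^\top \nabla f(\y_{l-1}+\s_{T_1+j}) \ge \eta \|\s_{T_1+j}\|^3$; then Lemma~\ref{Lemma:AARCQ-T3-P} gives
\begin{equation*}
\eta \|\s_{T_1+j}\|^3 \ge \eta\left(\frac{1-\kappa_\theta}{\frac{L_h}{2}+\bar{\sigma}_2+(\kappa_e+\kappa_c)\kappa_{hs}+\kappa_\theta L_g}\right)^{3/2} \|\nabla f(\bar{\x}_l)\|^{3/2}.
\end{equation*}
Applying Lemma~\ref{Lemma:General-Second-Order} with $\g = \frac{l(l+1)}{2}\nabla f(\bar{\x}_l)$, $\s = \z-\z_{l-1}$, and $\sigma=\frac{\varsigma_l}{4}$, the required nonnegativity of the residual reduces to
\begin{equation*}
\frac{l(l+1)(l+2)\eta}{6}\left(\frac{1-\kappa_\theta}{\frac{L_h}{2}+\bar{\sigma}_2+(\kappa_e+\kappa_c)\kappa_{hs}+\kappa_\theta L_g}\right)^{3/2} \ge \frac{4}{3\sqrt{\varsigma_l}}\left(\frac{l(l+1)}{2}\right)^{3/2},
\end{equation*}
which is implied by $\varsigma_l \ge \left(\frac{L_h+2\bar{\sigma}_2+2(\kappa_e+\kappa_c)\kappa_{hs}+2\kappa_\theta L_g}{1-\kappa_\theta}\right)^3 \frac{1}{\eta^2}$. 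This completes the induction.

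Finally, to bound $T_3$, I would exploit that whenever the while-loop in Algorithm~\ref{Algorithm: AARCQ} triggers an update, $\varsigma$ is multiplied by $\gamma_3 > 1$. Since $\varsigma$ starts from $\varsigma_1$ and the loop must terminate once the threshold $\left(\frac{L_h+2\bar{\sigma}_2+2(\kappa_e+\kappa_c)\kappa_{hs}+2\kappa_\theta L_g}{1-\kappa_\theta}\right)^3 \frac{1}{\eta^2}$ is reached, a simple logarithmic count yields the stated upper bound on $T_3$. The only real obstacle is bookkeeping of the extra $(\kappa_e+\kappa_c)\kappa_{hs}$ term introduced by the inexact Hessian; all the structural arguments (induction, convexity combination, Lemma~\ref{Lemma:General-Second-Order} application) carry over without modification.
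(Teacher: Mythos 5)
Your proof is correct and follows the exact same route as the paper, which simply states that the argument is the proof of Lemma~\ref{Lemma:AARC-T3} with Lemma~\ref{Lemma:AARC-T3-P2} replaced by Lemma~\ref{Lemma:AARCQ-T3-P}; you have merely spelled out the details. Note also that your derivation yields the threshold $\varsigma_l \ge \left(\frac{L_h+2\bar{\sigma}_2+2(\kappa_e+\kappa_c)\kappa_{hs}+2\kappa_\theta L_g}{1-\kappa_\theta}\right)^3\frac{1}{\eta^2}$, which is consistent with the $T_3$ bound displayed in the lemma and shows that the first inequality in the lemma statement (which omits the $(\kappa_e+\kappa_c)\kappa_{hs}$ term) is a typographical error in the paper.
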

	\begin{proof}
		The proof is similar to that of Lemma \ref{Lemma:AARC-T3} except
		replacing Lemma \ref{Lemma:AARC-T3-P2}
		with Lemma \ref{Lemma:AARCQ-T3-P}.
	\end{proof}
	
	Now we are able to prove the base case of $l =1$ for Theorem \ref{Theorem:AARCQ-Main}.
	\begin{theorem}\label{Theorem:AARCQ-Main-Prep}
		It holds that
		{\small\begin{equation*}
			f(\bar{\x}_1) \leq \psi_1(\z_1) \leq \psi_1(\z) \leq f(\z) + \frac{L_h+\bar{\sigma}_1+(\kappa_{e}+ \kappa_{c} )\kappa_{hs}}{2}\left\|\z-\x_0\right\|^3 + \frac{2\kappa_\theta (1+\kappa_\theta) L_g^2}{\sigma_{\min}}\|\x_0-\x^*\|^2+ \frac{1}{6}\varsigma_1\left\|\z-\bar{\x}_1\right\|^3 .
			\end{equation*}}
	\end{theorem}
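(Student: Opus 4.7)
The plan is to mimic the proof of Theorem~\ref{Theorem:AARC-Main-Prep} step by step, replacing $\nabla^2 f(\x_i)$ by $H(\x_i)$ and tracking the extra Hessian-approximation error produced by \eqref{Hessian-Approximation}. The equality $f(\bar{\x}_1)=\psi_1(\z_1)$ is immediate from the construction $\psi_1(\z)=f(\bar{\x}_1)+\tfrac{1}{6}\varsigma_1\|\z-\bar{\x}_1\|^3$ and $\bar{\x}_1=\x_{T_1}$, so the real task is the upper bound on $\psi_1(\z)$. Since iteration $T_1-1$ is the successful iteration in \textsf{SAS} with $\rho_{T_1-1}<0$, we start from $f(\bar{\x}_1)=f(\x_{T_1})\leq m(\x_{T_1-1},\s_{T_1-1},\sigma_{T_1-1})$, and $\x_0=\x_{T_1-1}$ because Phase~I leaves the iterate unchanged on unsuccessful steps.

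I then split
\[
m(\x_{T_1-1},\s_{T_1-1},\sigma_{T_1-1})=\bigl[m(\x_{T_1-1},\s_{T_1-1},\sigma_{T_1-1})-m(\x_{T_1-1},\s^m_{T_1-1},\sigma_{T_1-1})\bigr]+m(\x_{T_1-1},\s^m_{T_1-1},\sigma_{T_1-1}),
\]
where $\s^m_{T_1-1}$ is the exact minimizer of $m(\x_{T_1-1},\cdot,\sigma_{T_1-1})$, which exists and is unique because $H(\x_{T_1-1})\succeq 0$ by \eqref{Hessian-Convex}. For the first bracket, the exact-case argument carries over essentially unchanged: convexity in $\s$ gives the Cauchy--Schwarz bound $\kappa_\theta\|\nabla f(\x_{T_1-1})\|\|\s_{T_1-1}\|\|\s_{T_1-1}-\s^m_{T_1-1}\|$, and the cubic-growth estimate $\sigma_{\min}\|\s\|^3\leq(1+\kappa_\theta)\|\s\|\|\nabla f(\x_{T_1-1})\|$ (applied at $\s_{T_1-1}$ and $\s^m_{T_1-1}$ using Condition~\ref{Cond:Approx_Subprob}) yields $\|\s_{T_1-1}-\s^m_{T_1-1}\|\leq 2\sqrt{(1+\kappa_\theta)\|\nabla f(\x_{T_1-1})\|/\sigma_{\min}}$. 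Combining with $\nabla f(\x^*)=0$ and Lipschitz continuity of $\nabla f$ (Assumption~\ref{Assumption-Objective-Gradient-Hessian}) produces the $\frac{2\kappa_\theta(1+\kappa_\theta)L_g^2}{\sigma_{\min}}\|\x_0-\x^*\|^2$ contribution.

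For the second term, I would use convexity of $m(\x_{T_1-1},\cdot,\sigma_{T_1-1})$ to write $m(\x_{T_1-1},\s^m_{T_1-1},\sigma_{T_1-1})\leq m(\x_{T_1-1},\z-\x_{T_1-1},\sigma_{T_1-1})$, expand the right-hand side, and invoke \eqref{Equality-Second-Order-AARCQ} to compare against $f(\z)$. The algebraic manipulation replaces $\frac{L_h}{6}+\frac{\sigma_{T_1-1}}{3}$ of the exact case by $\frac{L_h+(\kappa_e+\kappa_c)\kappa_{hs}}{6}+\frac{\sigma_{T_1-1}}{3}$, and after bounding $\sigma_{T_1-1}\leq\bar{\sigma}_1$ through Lemma~\ref{Lemma:AARCQ-T1}, this absorbs into the coefficient $\tfrac{L_h+\bar{\sigma}_1+(\kappa_e+\kappa_c)\kappa_{hs}}{2}\|\z-\x_0\|^3$ asserted in the theorem.

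The central technical obstacle will be that the Hessian-approximation bound \eqref{Hessian-Approximation} gives $\|H(\x_{T_1-1})-\nabla^2 f(\x_{T_1-1})\|\leq(\kappa_e+\kappa_c)\kappa_{hs}\|\s_{T_1-1}\|$, so the direct expansion of $m(\x_{T_1-1},\z-\x_{T_1-1},\sigma_{T_1-1})$ produces a mixed quadratic-in-$\|\z-\x_0\|$ term weighted by $\|\s_{T_1-1}\|$ rather than a clean cubic in $\|\z-\x_0\|$. The plan to handle this is to route the estimate through \eqref{Equality-Second-Order-AARCQ} applied at the trial step $\s=\z-\x_{T_1-1}$ at the base point $\x_{T_1-1}$: the quadratic-form error is absorbed into the same $\tfrac{(\kappa_e+\kappa_c)\kappa_{hs}}{2}\|\s\|^3$ remainder that appears in \eqref{Equality-Second-Order-AARCQ}, which is precisely why the exact-case coefficient $\tfrac{1}{3}$ on $\|\z-\x_0\|^3$ inflates to $\tfrac{1}{2}$ in the stated bound. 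Once this cube is in hand, the induction driving Theorem~\ref{Theorem:AARCQ-Main} in the convex case closes, in exact parallel with Theorem~\ref{Theorem:AARC-Main}.
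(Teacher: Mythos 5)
Your plan reproduces the paper's own proof of Theorem~\ref{Theorem:AARCQ-Main-Prep} essentially step by step: the same split of $m(\x_{T_1-1},\s_{T_1-1},\sigma_{T_1-1})$ into an optimality gap plus the exact-minimizer value, the same use of Condition~\ref{Cond:Approx_Subprob} together with $H(\x_{T_1-1})\succeq 0$ from \eqref{Hessian-Convex} to extract the $\frac{2\kappa_\theta(1+\kappa_\theta)L_g^2}{\sigma_{\min}}\|\x_0-\x^*\|^2$ term, and the same appeal to \eqref{Equality-Second-Order-AARCQ} to absorb the Hessian-approximation error into the cubic coefficient $\frac{L_h+\bar{\sigma}_1+(\kappa_e+\kappa_c)\kappa_{hs}}{2}$. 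The ``central technical obstacle'' you flag — that \eqref{Hessian-Approximation} controls $\|H(\x_{T_1-1})-\nabla^2 f(\x_{T_1-1})\|$ in terms of $\|\s_{T_1-1}\|$ rather than $\|\z-\x_{T_1-1}\|$ — is handled in the paper exactly as you propose, by formally applying \eqref{Equality-Second-Order-AARCQ} with $\s$ replaced by the trial direction $\z-\x_{T_1-1}$, so your reconstruction is faithful to the paper's argument.
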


	\begin{proof}
		By the definition of $\psi_1(\z)$ and the fact that $\bar{\x}_1=\x_{T_1}$, we have
		\begin{equation*}
		f(\bar{\x}_1) = f(\x_{T_1}) = \psi_1(\z_1).
		\end{equation*}
		Furthermore, by the criterion of successful iteration in \textsf{SAS},
		\begin{eqnarray*}
			f(\bar{\x}_1) & = & f(\x_{T_1}) \\
			& \leq & m(\x_{T_1-1}, \s_{T_1-1}, \sigma_{T_1-1}) \\
			& = & \left[ m(\x_{T_1-1}, \s_{T_1-1}, \sigma_{T_1-1}) - m(\x_{T_1-1}, \s^m_{T_1-1}, \sigma_{T_1-1}) \right] + m(\x_{T_1-1}, \s^m_{T_1-1}, \sigma_{T_1-1}),
		\end{eqnarray*}
		where $\s^m_{T_1-1}$ denotes the global minimizer of $m(\x_{T_1-1}, \s, \sigma_{T_1-1})$ over $\br^d$. Since $H(\x_{T_1-1}) \succeq 0 $ due to \eqref{Hessian-Convex}, $m(\x_{T_1-1},\s, \sigma_{T_1-1})$ is convex as well. Indeed, we have
		\begin{eqnarray*}
			\nabla_{\s}^2 m(\x_{T_1-1},\s, \sigma_{T_1-1}) & = & H(\x_{T_1-1}) + \sigma_{T_1-1}\|\s\| \cdot \BI + \sigma_{T_1-1}\frac{\s\s^\top}{\|\s\|^2} \succeq 0。
		\end{eqnarray*}
	    Therefore, we have
		\begin{eqnarray*}
			& & m(\x_{T_1-1}, \s_{T_1-1}, \sigma_{T_1-1}) - m(\x_{T_1-1}, \s^m_{T_1-1}, \sigma_{T_1-1}) \\
			& \leq & \nabla_{\s} m(\x_{T_1-1}, \s_{T_1-1}, \sigma_{T_1-1})^\top \left(\s_{T_1-1} - \s^m_{T_1-1}\right) \\
			& \leq & \left\| \nabla_{\s} m(\x_{T_1-1}, \s_{T_1-1}, \sigma_{T_1-1}) \right\| \left\|\s_{T_1-1} - \s^m_{T_1-1}  \right\| \\
			& { \eqref{Eqn:Approx_Subprob} \above 0pt  \leq }& \kappa_\theta \left\| \nabla f(\x_{T_1-1}) \right\| \left\| \s_{T_1-1} \right\| \left\| \s_{T_1-1} - \s^m_{T_1-1}  \right\|.
		\end{eqnarray*}
		To bound $\left\|\s_{T_1-1} - \s^m_{T_1-1}\right\|$, since $H(\x_{T_1-1}) \succeq 0$ we have that
		\begin{eqnarray*}
			\sigma_{\min}\left\|\s\right\|^3  \leq  \sigma_{T_1-1}\left\|\s\right\|^3 &=& \s^\top \left[ \nabla m \left(\x_{T_1-1}, \s, \sigma_{T_1-1}\right) - \nabla f(\x_{T_1-1}) - H(\x_{T_1-1}) \s \right] \\
			& \leq & \s^\top \left[ \nabla m \left(\x_{T_1-1}, \s, \sigma_{T_1-1}\right) - \nabla f(\x_{T_1-1})\right] \\
			&\leq & \left\|\s\right\| \left[ \left\| \nabla f(\x_{T_1-1})\right\| +   \left\| \nabla m\left(\x_{T_1-1}, \s, \sigma_{T_1-1}\right)\right\|  \right] \\
			&{ \eqref{Eqn:Approx_Subprob} \above 0pt  \leq }& (1+\kappa_\theta)\left\|\s\right\| \left\| \nabla f(\x_{T_1-1})\right\| 
		\end{eqnarray*}
		where $\s=\s_{T_1-1}$ or $\s=\s^m_{T_1-1}$. 	
		Thus, we conclude that
		\begin{equation*}
		\left\|\s_{T_1-1}-\s^m_{T_1-1}  \right\| \leq \left\|\s_{T_1-1}\right\| + \left\|\s^m_{T_1-1} \right\| \leq 2\sqrt{\frac{(1+\kappa_\theta)\left\|\nabla f(\x_{T_1-1})\right\|}{\sigma_{\min}}},
		\end{equation*}
		which combines with Assumption \ref{Assumption-Objective-Gradient-Hessian} yields that
		\begin{eqnarray*}
			m(\x_{T_1-1}, \s_{T_1-1}, \sigma_{T_1-1}) - m(\x_{T_1-1}, \s^m_{T_1-1}, \sigma_{T_1-1}) & \leq & \frac{2\kappa_\theta(1+\kappa_\theta)}{\sigma_{\min}} \left\|\nabla f(\x_{T_1-1})\right\|^2 \\
			& = &  \frac{2\kappa_\theta(1+\kappa_\theta)}{\sigma_{\min}} \left\|\nabla f(\x_{T_1-1}) - \nabla f(\x^*)\right\|^2 \\
			& \leq & \frac{2\kappa_\theta (1+\kappa_\theta) L_g^2}{\sigma_{\min}}\left\| \x_{T_1-1} - \x_*\right\|^2 \\
			& = & \frac{2\kappa_\theta (1+\kappa_\theta) L_g^2}{\sigma_{\min}}\left\| \x_0 - \x_*\right\|^2.
		\end{eqnarray*}
		On the other hand, we have
		\begin{eqnarray*}
			& & m(\x_{T_1-1}, \s^m_{T_1-1}, \sigma_{T_1-1}) \\
			& = & f(\x_{T_1-1}) + (\s^m_{T_1-1})^\top \nabla f(\x_{T_1-1}) + \frac{1}{2} (\s^m_{T_1-1})^\top H(\x_{T_1-1}) \s^m_{T_1-1} + \frac{1}{3}\sigma_{T_1-1}\left\| \s^m_{T_1-1}\right\|^3 \\
			& \leq & f(\x_{T_1-1}) + (\z-\x_{T_1-1})^\top \nabla f(\x_{T_1-1}) + \frac{1}{2} (\z-\x_{T_1-1})^\top H(\x_{T_1-1})(\z-\x_{T_1-1}) + \frac{1}{3}\sigma_{T_1-1}\left\|\z-\x_{T_1-1}\right\|^3 \\
			& \leq & f(\z) + \frac{L_h}{6}  \left\|\z-\x_{T_1-1}\right\|^3  + \frac{1}{3}\sigma_{T_1-1} \left\|\z-\x_{T_1-1}\right\|^3  + \frac{1}{2}(\kappa_{e}+ \kappa_{c} )\kappa_{hs}\left\|\z-\x_{T_1-1}\right\|^3\\
			& \leq & f(\z) + \frac{L_h+\bar{\sigma}_1+(\kappa_{e}+ \kappa_{c} )\kappa_{hs}}{2}\left\|\z-\x_{T_1-1}\right\|^3 \\
			& = & f(\z) + \frac{L_h+\bar{\sigma}_1+(\kappa_{e}+ \kappa_{c} )\kappa_{hs}}{2}\left\|\z-\x_0\right\|^3,
		\end{eqnarray*}
		where the second inequality is due to \eqref{Equality-Second-Order-AARCQ} and Assumption \ref{Assumption-Objective-Gradient-Hessian}. Therefore, we conclude that
		\begin{eqnarray*}
			\psi_1(\z) & = & f(\bar{\x}_1)  + \frac{1}{6}\varsigma_1\left\| \z-\bar{\x}_1\right\|^3 \\
			&\leq & f(\z) + \frac{L_h+\bar{\sigma}_1+(\kappa_{e}+ \kappa_{c} )\kappa_{hs}}{2}\left\|\z-\x_0\right\|^3 + \frac{2\kappa_\theta (1+\kappa_\theta) L_g^2}{\sigma_{\min}}\|\x_0-\x^*\|^2+ \frac{1}{6}\varsigma_1\left\|\z-\bar{\x}_1\right\|^3 .
		\end{eqnarray*}
	\end{proof}
	
	\subsection{Proofs in Section \ref{Section:AAGD}}
	\begin{lemma}\label{Lemma:AAGD-T1}
		Letting $\bar{\sigma}_1= \max\left\{\sigma_0, \gamma_2 L_g\right\} > 0$, we have $T_1\leq 1+\frac{2}{\log\left(\gamma_1\right)}\log\left(\frac{\bar{\sigma}_1}{\sigma_{\min}}\right)$.
	\end{lemma}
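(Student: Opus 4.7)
The strategy mirrors the proof of Lemma~\ref{Lemma:AARC-T1}, but uses the gradient-Lipschitz descent inequality in place of the cubic third-order bound. The plan is to show that once $\sigma_i$ is at least $L_g$, the test $\rho_i < 0$ in \textsf{SAS} must be satisfied, so the geometric blow-up $\sigma_{i+1}\in[\gamma_1\sigma_i,\gamma_2\sigma_i]$ occurring at every unsuccessful iteration cannot push $\sigma_i$ much beyond $\gamma_2 L_g$.

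First, I would apply the standard descent lemma coming from the Lipschitz continuity of $\nabla f$ in \eqref{Def:Lipschitz-Gradient}:
\begin{equation*}
f(\x_i+\s_i) \;\le\; f(\x_i) + \s_i^\top\nabla f(\x_i) + \frac{L_g}{2}\|\s_i\|^2
\;=\; m(\x_i,\s_i,\sigma_i) + \frac{L_g-\sigma_i}{2}\|\s_i\|^2,
\end{equation*}
using the definition \eqref{prob:AAGD} of $m(\x_i,\s_i,\sigma_i)$ in the gradient-method setting. Hence whenever $\sigma_i \ge L_g$, we have $f(\x_i+\s_i)\le m(\x_i,\s_i,\sigma_i)$, i.e.\ $\rho_i\le 0$, and iteration $i$ is declared successful. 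Consequently, every index $i\le T_1-2$ (which by definition corresponds to an unsuccessful iteration in \textsf{SAS}) must satisfy $\sigma_i < L_g$.

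Combining this with the update rule $\sigma_{i+1}\in[\gamma_1\sigma_i,\gamma_2\sigma_i]$ at unsuccessful iterations and $\sigma_{T_1}\le\sigma_{T_1-1}$ at the final successful iteration, I obtain $\sigma_{T_1}\le\gamma_2\sigma_{T_1-2}\le \gamma_2 L_g$, and in general $\sigma_{T_1}\le\bar{\sigma}_1:=\max\{\sigma_0,\gamma_2 L_g\}$ (the $\sigma_0$ term accommodates the case $T_1=1$). Then, exactly as in the proof of Lemma~\ref{Lemma:AARC-T1}, using the lower bound $\sigma_{\min}\le\sigma_i$ and the multiplicative growth at each of the $T_1-1$ unsuccessful iterations, the telescoping estimate
\begin{equation*}
\frac{\bar{\sigma}_1}{\sigma_{\min}} \;\ge\; \frac{\sigma_{T_1}}{\sigma_{T_1-1}}\prod_{j=0}^{T_1-2}\frac{\sigma_{j+1}}{\sigma_j} \;\ge\; \gamma_1^{T_1-1}\frac{\sigma_{\min}}{\bar{\sigma}_1}
\end{equation*}
yields $T_1 \le 1 + \frac{2}{\log(\gamma_1)}\log(\bar{\sigma}_1/\sigma_{\min})$, as claimed.

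There is no real obstacle here; the only step that requires a moment of care is the derivation of the correct threshold for $\sigma_i$: one must distinguish between the sufficient condition $\sigma_i\ge L_g$ that guarantees a successful iteration and the observed bound $\sigma_i < L_g$ enforced on failed iterations, so that the trailing multiplication by $\gamma_2$ yields precisely the constant $\gamma_2 L_g$ appearing in $\bar{\sigma}_1$. All remaining arithmetic is routine and identical in structure to the cubic-regularization proofs already presented.
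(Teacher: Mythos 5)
Your proof is correct and mirrors the paper's argument step for step: the same gradient-Lipschitz descent bound gives $f(\x_i+\s_i)\le m(\x_i,\s_i,\sigma_i)+\frac{L_g-\sigma_i}{2}\|\s_i\|^2$, the same threshold $\sigma_i\ge L_g$ forces a successful iteration, and the same telescoping over the $T_1-1$ unsuccessful steps yields the bound. Nothing further is needed.
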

	\begin{proof}
		We have
		\begin{eqnarray}
		f(\x_i + \s_i) & = & f(\x_i) + \s_i^\top \nabla f(\x_i) + \int_{0}^{1} \s_i^\top \left[\nabla f(\x_i + \tau \s_i) -  \nabla f(\x_i)\right] \ d \tau \nonumber \\
		&\le & f(\x_i) + \s_i^\top \nabla f(\x_i) + \frac{L_g}{2}  \| \s_i \|^2   \nonumber \\
		& = & m(\x_i, \s_i, \sigma_i) + \left(\frac{L_g}{2}-\frac{\sigma_i}{2}\right)\left\|\s_i\right\|^3, \label{Equality-First-Order-AAGD}
		\end{eqnarray}
		where the inequality holds true due to Assumption \ref{Assumption-Objective-Gradient-Hessian}. Therefore, we conclude that
		\begin{equation*}
		\sigma_i \geq L_g \quad \Longrightarrow \quad f(\x_i+\s_i)\leq m(\x_i, \s_i, \sigma_i),
		\end{equation*}
		which further implies that $\sigma_i<L_g$ for $i\le T_1-2$. Hence,
		\begin{equation*}
		\sigma_{T_1} \le \sigma_{T_1 -1}\le \gamma_2 \sigma_{T_1 -2} \le \gamma_2 L_g.
		\end{equation*}
		Because $\bar{\sigma}_1=\max\left\{\sigma_0, \gamma_2 L_g\right\}$, it follows from the construction of Algorithm \ref{Algorithm: AARC} that $\sigma_{\min}\leq\sigma_i$ for all iterations, and $\gamma_1\sigma_i\leq\sigma_{i+1}$ for all unsuccessful iterations. Consequently, we have
		\begin{equation*}
		\frac{\bar{\sigma}_1}{\sigma_{\min}} \geq \frac{\sigma_{T_1}}{\sigma_0} = \frac{\sigma_{T_1}}{\sigma_{T_1-1}} \cdot \prod_{j=0}^{T_1-2} \frac{\sigma_{j+1}}{\sigma_j} \geq \gamma_1^{T_1-1}\left(\frac{\sigma_{\min}}{\bar{\sigma}_1}\right),
		\end{equation*}
		and hence $T_1\leq 1+\frac{2}{\log\left(\gamma_1\right)}\log\left(\frac{\bar{\sigma}_1}{\sigma_{\min}}\right)$.
	\end{proof}
	
	\begin{lemma}\label{Lemma:AAGD-T2}
		Letting $\bar{\sigma}_2 = \max\left\{\bar{\sigma}_1, \gamma_2 L_g+\gamma_2\eta\right\}>0$, we have $T_2\leq \left(1+\frac{2}{\log(\gamma_1)}\log\left(\frac{\bar{\sigma}_2}{\sigma_{\min}}\right)\right)|\SCal|$.
	\end{lemma}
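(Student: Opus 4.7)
The plan is to mirror the proof of Lemma \ref{Lemma:AARC-T2}, replacing the cubic-regularization bookkeeping with its quadratic analogue. The goal is to show that whenever $\sigma_{T_1+j}$ is sufficiently large, the success criterion $\rho_{T_1+j}\geq\eta$ must be triggered, so that $\sigma$ cannot grow unboundedly; then a standard telescoping on $\sigma$ between successful and unsuccessful iterations delivers the $(1+|\SCal|)$-style bound.

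First I would exploit the fact that, for Algorithm \ref{Algorithm: AAGD}, the subproblem is solved exactly in closed form, so the first-order optimality gives $\nabla f(\y_l) + \sigma_{T_1+j}\s_{T_1+j}=0$, i.e.\ $\s_{T_1+j}^\top\nabla f(\y_l) = -\sigma_{T_1+j}\|\s_{T_1+j}\|^2$. Write
\begin{eqnarray*}
\s_{T_1+j}^\top\nabla f(\y_l+\s_{T_1+j})
&=& \s_{T_1+j}^\top\bigl[\nabla f(\y_l+\s_{T_1+j})-\nabla f(\y_l)\bigr] + \s_{T_1+j}^\top\nabla f(\y_l) \\
&\leq& L_g\|\s_{T_1+j}\|^2 - \sigma_{T_1+j}\|\s_{T_1+j}\|^2,
\end{eqnarray*}
using Assumption \ref{Assumption-Objective-Gradient-Hessian} (Lipschitz gradient) and Cauchy--Schwarz. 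Dividing by $\|\s_{T_1+j}\|^2$ and flipping sign yields
\[
-\frac{\s_{T_1+j}^\top\nabla f(\y_l+\s_{T_1+j})}{\|\s_{T_1+j}\|^2} \geq \sigma_{T_1+j} - L_g,
\]
so as soon as $\sigma_{T_1+j}\geq L_g+\eta$ the iteration is declared successful. Hence an unsuccessful iteration forces $\sigma_{T_1+j}<L_g+\eta$, and the subsequent update gives $\sigma_{T_1+j+1}\leq\gamma_2(L_g+\eta)$; combining with the bound $\sigma_{T_1}\leq\bar{\sigma}_1$ from Lemma \ref{Lemma:AAGD-T1}, one can uniformly bound $\sigma_{T_1+j}\leq\bar{\sigma}_2:=\max\{\bar{\sigma}_1,\gamma_2 L_g+\gamma_2\eta\}$.

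The last step is the telescoping argument used in the proof of Lemma \ref{Lemma:AARC-T2}: since $\sigma_{\min}\leq\sigma_{T_1+j}$ for all $j$, $\sigma_{T_1+j+1}\leq\sigma_{T_1+j}$ when $j\in\SCal$, and $\gamma_1\sigma_{T_1+j}\leq\sigma_{T_1+j+1}$ when $j\notin\SCal$, splitting the product
\[
\frac{\bar{\sigma}_2}{\sigma_{\min}} \;\geq\; \frac{\sigma_{T_1+T_2}}{\sigma_{T_1}} \;=\; \prod_{j\in\SCal}\frac{\sigma_{T_1+j+1}}{\sigma_{T_1+j}}\cdot\prod_{j\notin\SCal}\frac{\sigma_{T_1+j+1}}{\sigma_{T_1+j}} \;\geq\; \gamma_1^{T_2-|\SCal|}\Bigl(\frac{\sigma_{\min}}{\bar{\sigma}_2}\Bigr)^{|\SCal|},
\]
and solving for $T_2$ gives $T_2\leq|\SCal|+\frac{|\SCal|+1}{\log\gamma_1}\log(\bar{\sigma}_2/\sigma_{\min})\leq\bigl(1+\frac{2}{\log\gamma_1}\log(\bar{\sigma}_2/\sigma_{\min})\bigr)|\SCal|$.

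I do not expect any serious obstacle here: the only substantive difference from Lemma \ref{Lemma:AARC-T2} is that the Hessian-Lipschitz argument is replaced by a gradient-Lipschitz argument, and the cubic term $\sigma\|\s\|^3$ is replaced by $\sigma\|\s\|^2$ in the success criterion. The exact closed-form of the subproblem actually simplifies matters, since there is no $\kappa_\theta$ slack to track as in Lemma \ref{Lemma:AARC-T2}. The only bookkeeping care needed is to ensure the correct definition of $\bar{\sigma}_2$ so that it dominates both $\bar{\sigma}_1$ (to cover the final $\sigma_{T_1}$ coming out of \textsf{SAS}) and $\gamma_2(L_g+\eta)$ (to cover one additional $\gamma_2$-blowup past a borderline unsuccessful iteration).
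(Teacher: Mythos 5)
Your proposal is correct and follows essentially the same argument as the paper's proof: the decomposition of $\s_{T_1+j}^\top\nabla f(\y_l+\s_{T_1+j})$, the use of the exact subproblem solution to replace $\s_{T_1+j}^\top\nabla f(\y_l)$ by $-\sigma_{T_1+j}\|\s_{T_1+j}\|^2$, the Lipschitz-gradient bound leading to $\sigma_{T_1+j}\geq L_g+\eta$ forcing a successful iteration, and the telescoping product over successful and unsuccessful iterations. You also rightly observe that, unlike the cubic case, no $\kappa_\theta$ slack is needed here because the quadratic subproblem is solved exactly in closed form.
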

	\begin{proof}
		We have
		\begin{eqnarray*}
			\s_{T_1+j}^\top\nabla f(\y_l+\s_{T_1+j}) & = & \s_{T_1+j}^\top\left[\nabla f(\y_l + \s_{T_1+j}) - \nabla f(\y_l) \right] + \s_{T_1+j}^\top\nabla f(\y_l) \\
			& \leq & \left\| \nabla f(\y_l+\s_{T_1+j}) - \nabla f(\y_l) \right\| \left\| \s_{T_1+j}\right\| - \sigma_{T_1+j} \left\| \s_{T_1+j}\right\|^2 \\
			&  \leq  & \left(L_g - \sigma_{T_1+j}\right)\left\|\s_{T_1+j}\right\|^3,
		\end{eqnarray*}
		\noindent where the last inequality is due to Assumption \ref{Assumption-Objective-Gradient-Hessian}. Then it follows that
		\begin{equation*}
		-\frac{\s_{T_1+j}^\top\nabla f(\y_l+\s_{T_1+j})}{\left\| \s_{T_1+j}\right\|^2} \geq \sigma_{T_1+j} - L_g.
		\end{equation*}
		Therefore, we have
		\begin{equation*}
		\sigma_{T_1+j} \geq L_g+\eta \quad \Longrightarrow -\frac{\s_{T_1+j}^\top\nabla f(\y_l+\s_{T_1+j})}{\left\| \s_{T_1+j}\right\|^3}\geq\eta ,
		\end{equation*}
		which further implies that
		\begin{equation*}
		\sigma_{T_1+j+1}\leq \sigma_{T_1+j}\leq \gamma_2 \cdot \sigma_{T_1+j-1}\leq \gamma_2 \left(L_g+\eta\right),\; \forall \; j \in \SCal.
		\end{equation*}
		Therefore, the above quantity is bounded by $\bar{\sigma}_2=\max\left\{\bar{\sigma}_1,\gamma_2 L_g+\gamma_2\eta\right\}$, where $\bar{\sigma}_1$ represents an upper bound on $\sigma_{T_1}$. In addition, it follows from the construction of Algorithm \ref{Algorithm: AARCQ} that $\sigma_{\min}\leq\sigma_{T_1+j}$ for all iterations, and $\gamma_1\sigma_{T_1+j}\leq\sigma_{T_1+j+1}$ for all unsuccessful iterations. Therefore, we have
		\begin{equation*}
		\frac{\bar{\sigma}_2}{\sigma_{\min}} \geq \frac{\sigma_{T_1+T_2}}{\sigma_{T_1}} = \prod_{j\in\SCal} \frac{\sigma_{T_1+j+1}}{\sigma_{T_1+j}} \cdot \prod_{j\notin\SCal} \frac{\sigma_{T_1+j+1}}{\sigma_{T_1+j}} \geq \gamma_1^{T_2-|\SCal|}\left(\frac{\sigma_{\min}}{\bar{\sigma}_2}\right)^{|\SCal|},
		\end{equation*}
		and hence
		\begin{equation*}
		|\SCal| \le T_2\leq |\SCal|+\frac{\left(|\SCal|+1\right)}{\log\gamma_1} \log\left(\frac{\bar{\sigma}_2}{\sigma_{\min}}\right) \leq \left(1+ \frac{2}{\log\gamma_1} \log\left(\frac{\bar{\sigma}_2}{\sigma_{\min}}\right)\right)|\SCal|.
		\end{equation*}
	\end{proof}
	Before estimating the upper bound of $T_3$, i.e., the total number of the count of successfully updating $\varsigma>0$, we need to prove a few technical lemmas.
	\begin{lemma}\label{Lemma:AAGD-T3-P1}
		Let $\z_l=\argmin\limits_{\z\in\br^d} \ \psi_l(\z)$, then we have $\psi_l(\z) - \psi_l(\z_l) \geq \frac{1}{8}\varsigma_l\left\|\z-\z_l\right\|^2$.
	\end{lemma}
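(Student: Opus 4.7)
The plan is to follow the same template used in Lemma~\ref{Lemma:AARC-T3-P1}, but with the cubic regularizer replaced by a quadratic one, so the argument simplifies and no appeal to Nesterov's cubic lemma is needed. First, since $\z_l = \argmin_{\z\in\br^d}\psi_l(\z)$, the first-order optimality condition gives $\nabla\psi_l(\z_l) = 0$. Consequently, the desired estimate $\psi_l(\z) - \psi_l(\z_l) \geq \tfrac{1}{8}\varsigma_l\|\z-\z_l\|^2$ is equivalent to the Bregman-divergence bound
\begin{equation*}
\psi_l(\z) - \psi_l(\z_l) - \nabla\psi_l(\z_l)^\top(\z-\z_l) \geq \tfrac{1}{8}\varsigma_l\|\z-\z_l\|^2,
\end{equation*}
which is what I would establish directly.

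Next, I would unwind the recursive definition of $\psi_l$ in Table~\ref{tab:auxiliary} together with the initialization $\psi_1(\z)=f(\bar{\x}_1)+\tfrac{1}{4}\varsigma_1\|\z-\bar{\x}_1\|^2$ in Algorithm~\ref{Algorithm: AAGD}. A straightforward induction shows that we can write $\psi_l(\z) = \ell_l(\z) + \tfrac{1}{4}\varsigma_l\|\z-\bar{\x}_1\|^2$, where $\ell_l(\z)$ is an affine function of $\z$ (the accumulated linearizations $f(\bar{\x}_{r-1})+(\z-\bar{\x}_{r-1})^\top\nabla f(\bar{\x}_{r-1})$ carry through untouched, and the quadratic coefficients telescope since $\tfrac{1}{4}\varsigma_{r-1}+\tfrac{1}{4}(\varsigma_r-\varsigma_{r-1})=\tfrac{1}{4}\varsigma_r$). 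Because $\ell_l$ is affine, its Bregman divergence vanishes identically, and it suffices to verify the inequality for $d(\z):=\tfrac{1}{4}\varsigma_l\|\z-\bar{\x}_1\|^2$ alone.

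Finally, $d(\z)$ is a quadratic function with constant Hessian $\tfrac{1}{2}\varsigma_l\BI$, so an exact Taylor expansion gives $d(\z) - d(\z_l) - \nabla d(\z_l)^\top(\z-\z_l) = \tfrac{1}{4}\varsigma_l\|\z-\z_l\|^2 \geq \tfrac{1}{8}\varsigma_l\|\z-\z_l\|^2$, which finishes the proof. There is no genuine obstacle here; the only thing worth checking carefully is the decomposition $\psi_l(\z)=\ell_l(\z)+\tfrac{1}{4}\varsigma_l\|\z-\bar{\x}_1\|^2$, and this is an immediate induction on $l$ from the update rule in Table~\ref{tab:auxiliary}. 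The constant $\tfrac{1}{8}$ in the statement is simply a loose form of the exact constant $\tfrac{1}{4}$, presumably chosen for uniformity with analogous bounds elsewhere in the paper.
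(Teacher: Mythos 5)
Your proof is correct and takes essentially the same route as the paper's, which reduces the claim to a Bregman-divergence bound and invokes the strong convexity of $\psi_l$; your version simply makes the paper's terse appeal to ``strong convexity'' explicit by unwinding the recursion to exhibit $\psi_l(\z) = \ell_l(\z) + \tfrac14 \varsigma_l \|\z - \bar{\x}_1\|^2$ with $\ell_l$ affine, so that $\nabla^2 \psi_l = \tfrac12 \varsigma_l \BI$ throughout. Your side remark is also accurate: the exact constant obtainable is $\tfrac14 \varsigma_l$, and the $\tfrac18 \varsigma_l$ in the statement is a deliberate loosening (mirroring the $\tfrac{1}{12}\varsigma_l$ constant in the cubic-case Lemma~\ref{Lemma:AARC-T3-P1}, where the analogous term genuinely comes with the factor $\tfrac{1}{12}$ from Nesterov's lemma, so the paper likely kept the pattern of halving the leading coefficient for symmetry).
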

	\begin{proof}
		It suffices to show that
		\begin{equation*}
		\psi_l(\z) - \psi_l(\z_l) - \nabla\psi_l(\z_l)^\top (\z-\z_l) \geq \frac{1}{8}\varsigma_l\left\|\z-\z_l\right\|^2.
		\end{equation*}
		By using the fact that $\z_l=\argmin_{\z\in\br^d} \ \psi_l(\z)$ and $\nabla\psi_l(\z_l)=0$, and the strongly convexity of $\psi_l$, we obtain the desired result.
	\end{proof}
	
	\begin{lemma}\label{Lemma:General-First-Order}
		For any $\s\in\br^d$ and $\g\in\br^d$, we have
		\begin{equation*}
		\s^\top\g + \frac{1}{2}\sigma\left\|\s\right\|^2 \geq -\frac{1}{2\sigma}\left\|\g\right\|^{2}.
		\end{equation*}
	\end{lemma}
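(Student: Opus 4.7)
The plan is to mimic exactly the proof of Lemma \ref{Lemma:General-Second-Order}, since this is just the quadratic (first-order) analog of the cubic (second-order) bound used earlier. The function $\s \mapsto \s^\top\g + \tfrac{1}{2}\sigma\|\s\|^2$ is strongly convex in $\s$ (since $\sigma > 0$), so it has a unique unconstrained minimizer $\s^*$, and the claimed inequality is simply the statement that the minimum value equals $-\tfrac{1}{2\sigma}\|\g\|^2$.

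First, I would write down the first-order optimality condition, which reads $\g + \sigma \s^* = 0$, giving $\s^* = -\g/\sigma$ explicitly. From this, $(\s^*)^\top \g = -\|\g\|^2/\sigma$ and $\|\s^*\|^2 = \|\g\|^2/\sigma^2$. Plugging these two identities into the objective,
\begin{equation*}
(\s^*)^\top \g + \tfrac{1}{2}\sigma\|\s^*\|^2 = -\tfrac{1}{\sigma}\|\g\|^2 + \tfrac{1}{2\sigma}\|\g\|^2 = -\tfrac{1}{2\sigma}\|\g\|^2,
\end{equation*}
which is exactly the stated right-hand side. Since $\s^*$ is the global minimizer, this value lower bounds $\s^\top \g + \tfrac{1}{2}\sigma\|\s\|^2$ for every $\s \in \br^d$, which gives the lemma.

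There is essentially no obstacle here; the whole proof amounts to one line of calculus once the first-order optimality condition is invoked, and it follows the identical template of Lemma \ref{Lemma:General-Second-Order} (where the cube is replaced by a square and the factor $\tfrac{2}{3\sqrt{\sigma}}$ is replaced by $\tfrac{1}{2\sigma}$). The only small care to take is to note that strong convexity of the quadratic objective ensures the critical point $\s^*$ is indeed the global minimum, justifying the inequality for all $\s$. This lemma will presumably be invoked inside the analogue of Lemma \ref{Lemma:AARC-T3} for Algorithm \ref{Algorithm: AAGD} with the substitutions $\g = l\nabla f(\bar{\x}_l)$, $\s = \z - \z_{l-1}$, and $\sigma = \tfrac{1}{4}\varsigma_l$, producing a bound of the form $l(l+1)\eta\,\cdot (\cdots) \ge \tfrac{2}{\varsigma_l}(\cdots)$ which determines the threshold $\varsigma_l$ required for the induction.
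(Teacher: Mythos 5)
Your proposal is correct and follows essentially the same argument as the paper's proof: write down the first-order optimality condition $\g + \sigma\s^* = 0$, solve for $\s^*$, and evaluate the objective at $\s^*$ to obtain $-\tfrac{1}{2\sigma}\|\g\|^2$ as the global minimum value. The only stylistic difference is that you explicitly note strong convexity to justify that the critical point is a global minimizer, which the paper leaves implicit.
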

	\begin{proof}
		Denote $\s^*$ to be the minimum of $\s^\top\g + \frac{1}{2}\sigma\left\|\s\right\|^2$. Hence,
		$\g + \sigma\s^*=0$.
		Therefore, $(\s^*)^\top\g = -\sigma\left\|\s^*\right\|^2$ and $\left\|\g\right\|=\sigma\left\|\s^*\right\|$, and so
		\begin{equation*}
		(\s^*)^\top\g + \frac{1}{2}\sigma\left\|\s^*\right\|^2 = -\frac{1}{2}\sigma\left\|\s^*\right\|^2 = -\frac{1}{2\sigma}\left\|\g\right\|^{2}.
		\end{equation*}
	\end{proof}

	\begin{lemma}\label{Lemma:AAGD-T3-P2}
		For each iteration $j$ in the subroutine \textsf{AAS}, if it is a successful iteration, then we have
		\begin{equation*}
		\left\|\nabla f(\x_{j+1})\right\| \leq ( L_g+\bar{\sigma}_2)\left\|\s_j\right\|.
		\end{equation*}
	\end{lemma}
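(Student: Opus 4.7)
The plan is to decompose $\nabla f(\x_{j+1})$ around the current auxiliary point $\y_l$ (where the model was minimized) via the triangle inequality, then handle each piece by exploiting (i) the exact closed-form solution of the quadratic subproblem and (ii) Lipschitz continuity of $\nabla f$ from Assumption \ref{Assumption-Objective-Gradient-Hessian}.

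First, since iteration $j$ is the $l$-th successful iteration in \textsf{AAS}, the definitions in Algorithm \ref{Algorithm: AAGD} give $\x_{j+1} = \y_l + \s_j$. Moreover, the subproblem $\min_{\s} m(\y_l, \s, \sigma_j)$ with $m$ as in \eqref{prob:AAGD} is strongly convex and admits the exact minimizer
\[
\s_j = -\frac{1}{\sigma_j}\nabla f(\y_l),
\]
so that the first-order optimality condition $\nabla f(\y_l) + \sigma_j \s_j = 0$ yields the identity $\|\nabla f(\y_l)\| = \sigma_j \|\s_j\|$. This is the first key step.

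Next, I would write
\[
\nabla f(\x_{j+1}) = \nabla f(\y_l + \s_j) = \bigl[\nabla f(\y_l + \s_j) - \nabla f(\y_l)\bigr] + \nabla f(\y_l),
\]
apply the triangle inequality, bound the bracketed term by $L_g \|\s_j\|$ via \eqref{Def:Lipschitz-Gradient}, and bound $\|\nabla f(\y_l)\|$ by $\sigma_j \|\s_j\|$ using the identity above. Combining these two estimates gives $\|\nabla f(\x_{j+1})\| \leq (L_g + \sigma_j)\|\s_j\|$.

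Finally, I would invoke Lemma \ref{Lemma:AAGD-T2}, which establishes $\sigma_j \le \bar{\sigma}_2$ for every successful iteration in \textsf{AAS}, to conclude $\|\nabla f(\x_{j+1})\| \le (L_g + \bar{\sigma}_2)\|\s_j\|$. There is no real obstacle here: the argument is considerably simpler than its cubic-regularization analogues (Lemmas \ref{Lemma:AARC-T3-P2} and \ref{Lemma:AARCQ-T3-P}) because the gradient subproblem is solved exactly, so no $\kappa_\theta$ inexactness term appears and no second-order Taylor remainder needs to be controlled.
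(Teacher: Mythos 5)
Your proof is correct and follows essentially the same route as the paper: both use the exact first-order optimality condition $\nabla f(\y_l) + \sigma_j \s_j = 0$ for the quadratic subproblem, split $\nabla f(\y_l + \s_j)$ by the triangle inequality into a Lipschitz term and a $\sigma_j\|\s_j\|$ term, and then invoke the bound $\sigma_j \le \bar{\sigma}_2$ established in (the proof of) Lemma \ref{Lemma:AAGD-T2}. The only cosmetic difference is that you write $\nabla f(\y_l) = -\sigma_j\s_j$ explicitly, whereas the paper phrases the same step as subtracting $\nabla_{\s}m(\y_l,\s_j,\sigma_j) = 0$ inside the norm.
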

	\begin{proof}
		We denote $j$-th iteration is the $l$-th successful iteration, and note $\nabla_{\s} m(\y_l,\s_j,\sigma_j)=\nabla f(\y_l) + \sigma_j\s_j$. Then we have
		\begin{eqnarray*}
			\left\|\nabla f(\x_{j+1})\right\| & = & \left\| \nabla f(\y_l+\s_j) - \nabla_{\s} m(\y_l, \s_j, \sigma_j)\right\| \\
			& \leq & \left\| \nabla f(\y_l+\s_j)-\nabla f(\y_l)\right\| + \sigma_j\left\| \s_j \right\| \\
			& \leq &  L_g\left\| \s_j \right\| + \sigma_j\left\| \s_j\right\| \\
			& \leq & (L_g+\bar{\sigma}_2)\left\|\s_j\right\|
		\end{eqnarray*}
		where the second inequality follow from Assumption \ref{Assumption-Objective-Gradient-Hessian}. Rearranging the terms, the conclusion follows.
	\end{proof}
	
	Now we are ready to estimate the upper bound of $T_3$, i.e., the total number of the count of successfully updating $\varsigma>0$.
	\begin{lemma}\label{Lemma:AAGD-T3}
		We have
		\begin{equation}\label{GInequality-Induction}
		\psi_l(\z_l) \ge \frac{l(l+1)}{2} f(\bar{\x}_l)
		\end{equation}
		if $\varsigma_l \ge \left(2L_g+2\bar{\sigma}_2\right)^{2}\frac{1}{\eta}$,  which further implies that
		\begin{equation*}
		T_3 \le \left\lceil \frac{1}{\log\left(\gamma_3\right)}\log\left[ \left(2L_g+2\bar{\sigma}_2\right)^{2}\frac{1}{\eta \, \varsigma_1} \right] \right\rceil.
		\end{equation*}.
	\end{lemma}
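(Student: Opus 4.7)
The plan is to follow the pattern of Lemma \ref{Lemma:AARC-T3} with the cubic terms replaced by quadratic ones throughout. I will argue by induction on $l$. The base case $l=1$ is immediate since $\psi_1(\z_1) = f(\bar{\x}_1)$ by construction, which equals $\frac{l(l+1)}{2}f(\bar{\x}_l)$ at $l=1$.

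For the inductive step, assuming \eqref{GInequality-Induction} at index $l-1$, Lemma \ref{Lemma:AAGD-T3-P1} delivers the strong-convexity estimate
\begin{equation*}
\psi_{l-1}(\z) \geq \psi_{l-1}(\z_{l-1}) + \tfrac{1}{8}\varsigma_{l-1}\|\z-\z_{l-1}\|^2 \geq \tfrac{(l-1)l}{2}f(\bar{\x}_{l-1}) + \tfrac{1}{8}\varsigma_{l-1}\|\z-\z_{l-1}\|^2.
\end{equation*}
Substituting this into the definition of $\psi_l(\z)$ from Table~\ref{tab:auxiliary}, using $\varsigma_l \geq \varsigma_{l-1}$ to promote the quadratic coefficient from $\varsigma_{l-1}/8$ to $\varsigma_l/8$, and invoking convexity of $f$ to pass from $f(\bar{\x}_{l-1})$ to a linear lower bound centered at $\bar{\x}_l$, I would peel off the desired $\frac{l(l+1)}{2}f(\bar{\x}_l)$ piece and reduce the claim to the statement that for every $\z \in \br^d$,
\begin{equation*}
\tfrac{l(l+1)}{2}(\y_{l-1}-\bar{\x}_l)^{\top}\nabla f(\bar{\x}_l) + l(\z-\z_{l-1})^{\top}\nabla f(\bar{\x}_l) + \tfrac{1}{8}\varsigma_l\|\z-\z_{l-1}\|^2 \geq 0.
\end{equation*}
The coefficient identity that collapses the three linear contributions into this form uses $\y_{l-1} = \frac{l-1}{l+1}\bar{\x}_{l-1} + \frac{2}{l+1}\z_{l-1}$, which rearranges to $\tfrac{(l-1)l}{2}\bar{\x}_{l-1} = \tfrac{l(l+1)}{2}\y_{l-1} - l\z_{l-1}$.

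Two inputs then close the inequality. First, the successful-iteration criterion in \textsf{AAS} together with Lemma \ref{Lemma:AAGD-T3-P2}, applied with $\s_{T_1+j} = \bar{\x}_l - \y_{l-1}$, yields
\begin{equation*}
(\y_{l-1}-\bar{\x}_l)^{\top}\nabla f(\bar{\x}_l) \geq \eta \|\s_{T_1+j}\|^2 \geq \tfrac{\eta}{(L_g+\bar{\sigma}_2)^2}\|\nabla f(\bar{\x}_l)\|^2.
\end{equation*}
Second, Lemma \ref{Lemma:General-First-Order} applied with $\g = l\nabla f(\bar{\x}_l)$, $\s = \z-\z_{l-1}$, and $\sigma = \varsigma_l/4$ delivers
\begin{equation*}
l(\z-\z_{l-1})^{\top}\nabla f(\bar{\x}_l) + \tfrac{1}{8}\varsigma_l\|\z-\z_{l-1}\|^2 \geq -\tfrac{2l^2}{\varsigma_l}\|\nabla f(\bar{\x}_l)\|^2.
\end{equation*}
Combining these, the reduced inequality is guaranteed whenever $\frac{l(l+1)\eta}{2(L_g+\bar{\sigma}_2)^2} \geq \frac{2l^2}{\varsigma_l}$, i.e., $\varsigma_l \geq \frac{4l(L_g+\bar{\sigma}_2)^2}{(l+1)\eta}$; since $\frac{l}{l+1}\leq 1$, the uniform sufficient threshold is $\varsigma_l \geq (2L_g+2\bar{\sigma}_2)^2/\eta$, as claimed.

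For the iteration bound on $T_3$, recall that $\varsigma$ is initialized at $\varsigma_1$ and multiplied by $\gamma_3$ on each unsuccessful pass through the \textsf{while} loop in Algorithm \ref{Algorithm: AAGD}. Consequently $T_3$ cannot exceed the smallest integer $k$ for which $\gamma_3^{k}\varsigma_1 \geq (2L_g+2\bar{\sigma}_2)^2/\eta$, giving the stated logarithmic ceiling. The only delicate part of the argument is the coefficient bookkeeping in the inductive step: arranging the linear combination $\frac{(l-1)l}{2}(\bar{\x}_{l-1}-\bar{\x}_l) + l(\z-\bar{\x}_l)$ so that after substituting the expression for $\y_{l-1}$ the $f(\bar{\x}_l)$ contributions add up to exactly $\frac{l(l+1)}{2}$ and the residual is in the form $\frac{l(l+1)}{2}(\y_{l-1}-\bar{\x}_l)+l(\z-\z_{l-1})$ needed to align with the quadratic completion-of-the-square. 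Once that algebra is in place, invoking the three supporting lemmas is routine.
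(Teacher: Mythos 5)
Your proposal is correct and follows the paper's own proof essentially step for step: the same induction, the same strong-convexity lower bound from Lemma \ref{Lemma:AAGD-T3-P1}, the same coefficient identity relating $\y_{l-1}$, $\bar{\x}_{l-1}$, and $\z_{l-1}$, the same application of the successful-iteration criterion together with Lemma \ref{Lemma:AAGD-T3-P2}, and the same use of Lemma \ref{Lemma:General-First-Order} with $\sigma=\varsigma_l/4$ to close the inequality. Your bookkeeping of the threshold $\varsigma_l\geq(2L_g+2\bar\sigma_2)^2/\eta$ and the consequent logarithmic bound on $T_3$ matches the paper exactly.
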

	\begin{proof}
		When $l=1$, it trivially holds true that $\psi_l(\z_l)\ge\frac{l(l+1)}{2} f(\bar{\x}_l)$ since $\psi_1(\z_1)=f(\bar{\x}_1)$. As a result, it suffices to show that $\varsigma_l \ge \left(2L_g+2\bar{\sigma}_2\right)^{2}\frac{1}{\eta}$ by mathematical induction. Without loss of generality, we assume \eqref{GInequality-Induction} holds true for some $l-1 \ge 1$. Then, it follows from Lemma \ref{Lemma:AAGD-T3-P1}, the construction of $\psi_l(\z)$ and our induction that
		\begin{equation*}
		\psi_{l-1}(\z) \geq \psi_{l-1}(\z_{l-1}) + \frac{1}{8}\varsigma_{l-1}\left\|\z-\z_l\right\|^2 \geq \frac{(l-1)l}{2} f(\bar{\x}_{l-1}) + \frac{1}{8}\varsigma_{l-1}\left\|\z-\z_{l-1}\right\|^2.
		\end{equation*}
		As a result, we have
		\begin{eqnarray*}
			& & \psi_l(\z_l) \\
			& = & \min_{\z\in\br^d} \ \left\{\psi_l(\z) + l\left[ f(\bar{\x}_l)+\left(\z-\bar{\x}_l\right)^\top\nabla f(\bar{\x}_l)\right] + \frac{1}{4}\left(\varsigma_l - \varsigma_{l-1}\right)\|\z- \bar{\x}_1 \|^2 \right\} \\
			& \geq & \min_{\z\in\br^d} \ \left\{\frac{(l-1)l}{2} f(\bar{\x}_{l-1}) + \frac{1}{8}\varsigma_l\left\|\z - \z_{l-1}\right\|^2 + l\left[ f(\bar{\x}_l)+ \left(\z-\bar{\x}_l\right)^\top \nabla f(\bar{\x}_l)\right]  \right\} \\
			& \geq & \min_{\z\in\br^d} \ \left\{\frac{(l-1)l}{2} \left[ f(\bar{\x}_l) + \left(\bar{\x}_{l-1}-\bar{x}_l\right)^\top\nabla f(\bar{\x}_l)\right] + \frac{1}{8} \varsigma_l\left\|\z - \z_{l-1}\right\|^2\right. \\
			& & \quad \quad \quad \left. + l\left[ f(\bar{\x}_l)+\left(\z -\bar{\x}_l\right)^\top\nabla f(\bar{\x}_l)\right]  \right\} \\
			& = & \frac{l(l+1)}{2} f(\bar{\x}_l) + \min_{\z\in\br^d} \ \left\{ \frac{(l-1)l}{2}\left(\bar{\x}_{l-1}-\bar{\x}_l\right)^\top\nabla f(\bar{\x}_l) + \frac{1}{8}\varsigma_l\left\|\z-\z_{l-1}\right\|^2 \right.\\
			& &  \quad \quad \quad \left.+ l\left(\z-\bar{\x}_l\right)^\top\nabla f(\bar{\x}_l)\right\}.
		\end{eqnarray*}
		where the first inequality holds true because $\varsigma_l \ge \varsigma_{l-1}$. By the construction of $\y_{l-1}$, one has
		\begin{eqnarray*}
			\frac{(l-1)l}{2}\bar{\x}_{l-1} & = & \frac{l(l+1)}{2}\cdot\frac{l-1}{l+1}\bar{\x}_{l-1} \\
			& = & \frac{l(l+1)}{2}\left(\y_{l-1}-\frac{2}{l+1}\z_{l-1}\right) \\
			& = & \frac{l(l+1)}{2}\y_{l-1} - l\z_{l-1}.
		\end{eqnarray*}
		Combining the above two formulas yields
		\begin{equation*}
		\psi_l(\z_l) \geq \frac{l(l+1)}{2} f(\bar{\x}_l) + \min_{\nu\in\br^d} \ \left\{ \frac{l(l+1)}{2} \left(\y_{l-1}-\bar{\x}_l\right)^\top\nabla f(\bar{\x}_l) + \frac{1}{8}\varsigma_l\left\|\z- \z_{l-1}\right\|^2 + l\left(\z-\z_{l-1}\right)^\top\nabla f(\bar{\x}_l)\right\}.
		\end{equation*}
		Then, by the criterion of successful iteration in \textsf{AAS} and Lemma \ref{Lemma:AAGD-T3-P2}, we have
		\begin{eqnarray*}
			\left(\y_{l-1}-\bar{\x}_l\right)^\top\nabla f(\bar{\x}_l) & = & -\s_{T_1+j}^\top \nabla f(\y_{l-1}+\s_{T_1+j}) \\
			& \geq & \eta\left\| \s_{T_1+j} \right\|^2 \geq \eta \left(\frac{1}{ L_g+\bar{\sigma}_2}\right)^{2}\left\|  \nabla f(\bar{\x}_l)\right\|^{2},
		\end{eqnarray*}
		where the $l$-th successful iteration count refers to the $(j-1)$-th iteration count in \textsf{AAS}. Hence, it suffices to establish
		\begin{equation*}
		\frac{l(l+1)\eta}{2}\left(\frac{1}{ L_g+\bar{\sigma}_2}\right)^{2}\left\|  \nabla f(\bar{\x}_l)\right\|^{2} + \frac{1}{8}\varsigma_l\left\|\z-\z_{l-1}\right\|^2 + l\left(\z-\z_{l-1}\right)^\top\nabla f(\bar{\x}_l) \geq 0.
		\end{equation*}
		Using Lemma \ref{Lemma:General-First-Order} and setting $\g=l\nabla f(\bar{\x}_l)$ and $\sigma=\frac{1}{4}\varsigma_l$, the above is implied by
		\begin{equation*}
		\frac{l(l+1)\eta}{2}\left(\frac{1}{ L_g+\bar{\sigma}_2}\right)^{2} \geq \frac{2}{\varsigma_{l}} l^{2}.
		\end{equation*}
		Therefore, the conclusion follows if
		$\varsigma_l \ge\left(2L_g+2\bar{\sigma}_2\right)^{2}\frac{1}{\eta}$.
	\end{proof}
	
	Finally we are in a position to prove the base case of $l =1$ for Theorem \ref{Theorem:AAGD-Main}.
	\begin{theorem}\label{Theorem:AAGD-Main-Prep}
		It holds that
		\begin{equation*}
		f(\bar{\x}_1) \leq \psi_1(\z_1) \leq \psi_1(\z) \leq f(\z) + \frac{L_g + \bar{\sigma}_1} {2}\left\|\z-\x_0\right\|^2 + \frac{1}{4}\varsigma_1\left\| \z-\bar{\x}_1\right\|^2 .
		\end{equation*}
	\end{theorem}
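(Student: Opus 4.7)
The plan is to follow the same template as the base-case result for the cubic-regularization method (Theorem~\ref{Theorem:AARC-Main-Prep}), simplified by two features of the gradient-method setting: the quadratic subproblem in Algorithm~\ref{Algorithm: AAGD} is solved \emph{exactly}, so there is no approximation-error term analogous to the one controlled by $\kappa_\theta$; and we will need only convexity of $f$, rather than a full second-order Taylor expansion.

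The first two inequalities are immediate. By the definition $\psi_1(\z) = f(\bar{\x}_1) + \frac{1}{4}\varsigma_1\|\z-\bar{\x}_1\|^2$, the quadratic term is nonnegative (vanishing at $\z = \bar{\x}_1$), so $\psi_1(\z_1) \geq f(\bar{\x}_1)$; and $\psi_1(\z_1) \leq \psi_1(\z)$ holds by the definition $\z_1 = \argmin_{\z\in\br^d}\psi_1(\z)$.

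For the substantive rightmost inequality, I would begin with $\bar{\x}_1 = \x_{T_1}$ and the successful-iteration criterion $\rho_{T_1-1} < 0$ of SAS, which yields $f(\bar{\x}_1) \leq m(\x_{T_1-1},\s_{T_1-1},\sigma_{T_1-1})$. Since $\s_{T_1-1} = \argmin_{\s\in\br^d} m(\x_{T_1-1},\s,\sigma_{T_1-1})$ is the \emph{exact} minimizer, substituting the feasible choice $\s = \z - \x_{T_1-1}$ gives
\begin{equation*}
m(\x_{T_1-1},\s_{T_1-1},\sigma_{T_1-1}) \leq f(\x_{T_1-1}) + (\z - \x_{T_1-1})^\top \nabla f(\x_{T_1-1}) + \frac{\sigma_{T_1-1}}{2}\|\z - \x_{T_1-1}\|^2,
\end{equation*}
and convexity of $f$ bounds the first two terms on the right-hand side above by $f(\z)$.

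To finish I would use two simple observations: first, $\x_{T_1-1} = \x_0$, since every iteration of SAS before the (unique) successful iteration is unsuccessful and leaves the iterate unchanged; second, $\sigma_{T_1-1} \leq \bar{\sigma}_1 = \max\{\sigma_0,\gamma_2 L_g\}$ by the argument already used in the proof of Lemma~\ref{Lemma:AAGD-T1}. Combining these with the trivial bound $\bar{\sigma}_1 \leq L_g + \bar{\sigma}_1$ gives $f(\bar{\x}_1) \leq f(\z) + \frac{L_g+\bar{\sigma}_1}{2}\|\z - \x_0\|^2$, after which adding $\frac{1}{4}\varsigma_1\|\z-\bar{\x}_1\|^2$ to both sides (producing $\psi_1(\z)$ on the left) yields the claim. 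I do not foresee a substantive obstacle; the only mild point is that the advertised coefficient $(L_g+\bar{\sigma}_1)/2$ is slightly loose (the argument really delivers $\bar{\sigma}_1/2$), a cosmetic relaxation that preserves symmetry with the corresponding statement in Theorem~\ref{Theorem:AARC-Main-Prep}.
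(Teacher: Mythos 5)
Your proposal is correct and follows essentially the same route as the paper: the successful-iteration criterion of \textsf{SAS} gives $f(\bar{\x}_1)\leq m(\x_{T_1-1},\s_{T_1-1},\sigma_{T_1-1})$, exact minimality of $\s_{T_1-1}$ allows substituting $\s=\z-\x_{T_1-1}$, and the bounds $\x_{T_1-1}=\x_0$ and $\sigma_{T_1-1}\leq\bar{\sigma}_1$ finish the argument. The only (immaterial) difference is that you invoke convexity to bound $f(\x_{T_1-1})+(\z-\x_{T_1-1})^\top\nabla f(\x_{T_1-1})$ by $f(\z)$, whereas the paper uses the looser bound $f(\z)+\frac{L_g}{2}\|\z-\x_{T_1-1}\|^2$, which is exactly where the extra $L_g$ in the advertised coefficient comes from, as you correctly observe.
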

	
	\begin{proof}
		By the definition of $\psi_1(\z)$ and the fact that $\bar{\x}_1=\x_{T_1}$, we have
		\begin{equation*}
		f(\bar{\x}_1) = f(\x_{T_1}) =\psi_1(\z_1).
		\end{equation*}
		Furthermore, by the criterion of successful iteration in \textsf{SAS},
		\begin{eqnarray*}
			f(\bar{\x}_1) & = & f(\x_{T_1}) \\
			& \leq & m(\x_{T_1-1}, \s_{T_1-1}, \sigma_{T_1-1}) \\
			& = & f(\x_{T_1-1}) + \s_{T_1-1}^\top \nabla f(\x_{T_1-1}) + \frac{\sigma_{T_1-1}}{2}\left\|\s_{T_1-1}\right\|^2 \\
			& \leq & f(\x_{T_1-1}) + (\z-\x_{T_1-1})^\top \nabla f(\x_{T_1-1}) + \frac{1}{2}\sigma_{T_1-1}\left\| \z-\x_{T_1-1}\right\|^2 \\
			& \leq & f(\z) + \frac{L_g}{2}\left\|\z-\x_{T_1-1}\right\|^2 + \frac{1}{2}\sigma_{T_1-1} \left\|\z-\x_{T_1-1}\right\|^2 \\
			& \leq & f(\z) + \frac{L_g + \bar{\sigma}_1}{2}\left\|\z-\x_{T_1-1}\right\|^3 \\
			& = & f(\z) + \frac{L_g + \bar{\sigma}_1}{2}\left\| \z-\x_0\right\|^2,
		\end{eqnarray*}
		where the third inequality is due to Assumption \ref{Assumption-Objective-Gradient-Hessian}. Therefore, we conclude that
		\begin{eqnarray*}
			\psi_1(\z) & = & f(\bar{\x}_1)  + \frac{1}{4}\varsigma_1\left\| \z-\bar{\x}_1\right\|^2 \\
			&\leq & f(\z) + \frac{L_g+\bar{\sigma}_1}{2}\left\|\z-\x_0\right\|^3 + \frac{1}{4}\varsigma_1\left\|\z-\bar{\x}_1\right\|^2 .
		\end{eqnarray*}
	\end{proof}

\end{document}